\newtheorem{theorem}{Theorem}[section]
\newtheorem{corollary}[theorem]{Corollary}
\newtheorem{lemma}[theorem]{Lemma}
\newtheorem{remark}{Remark}
\newtheorem{assumption}{Assumption}
\def\thanksAAffil#1#2{
    \protected@xdef\@thanks{\@thanks
        \protect\footnotetextAAffil[#1]{#2}}%
}
\def\thanksANote#1#2{
    \protected@xdef\@thanks{\@thanks
        \protect\footnotetextANote[#1]{#2}}%
}
\title{On High-Dimensional Asymptotic Properties of Model Averaging Estimators}
\author{Ryo Ando\FootnotemarkAAffil{1}\thanksAAffil{1}{Department of Mathematical Informatics, The University
of Tokyo}
\and%
Fumiyasu Komaki\FootnotemarkAAffil{1}}
\date{}
\begin{document}
\maketitle

\begin{abstract}
 When multiple models are considered in regression problems, the model averaging method can be used to weigh and integrate the models.  In the present study, we examined how the goodness-of-prediction of the estimator depends on the dimensionality of explanatory variables when using a generalization of the model averaging method in a linear model. We specifically considered the case of high-dimensional explanatory variables, with multiple linear models deployed for subsets of these variables. Consequently, we derived the optimal weights that yield the best predictions. we also observe that the double-descent phenomenon occurs in the model averaging estimator. Furthermore, we obtained theoretical results by adapting methods such as the random forest to linear regression models. Finally, we conducted a practical verification through numerical experiments.
\end{abstract}

\section{Introduction}

Owing to recent technological advances, tasks involving high-dimensional data have become increasingly ubiquitous. In a high-dimensional environment, estimators may exhibit unique behaviors not observed in lower-dimensional settings. For example, the spherical concentration (\cite{hall2005geometric}) and double descent (\cite{advani2020high, belkin2018reconciling,hastie2022surprises, bartlett2020benign}) have been reported as phenomena peculiar to high-dimensional data. These behaviors necessitate in-depth research into the properties of high-dimensional data. \\
In the present study, we estimated the explanatory coefficients $\beta$ to predict future target data $y$ using the following random linear regression model:
\begin{align}
    y_i = x_i^\top \beta + \epsilon_i, \quad i = 1, 2, ...,n \label{linear model}
\end{align}
where each observation $x_i\in \mathbbm{R}^p$ and noise instance $\epsilon_i \in \mathbbm{R}$ is drawn i.i.d. from two independent distributions. Moreover, we assume that $\mathbbm{E}[x_i] = 0$, $\mathbbm{E}[\epsilon_i] = 0$, $\mathbbm{E}[x_ix_i^\top] = I$, and $\mathbbm{E}[\epsilon_i^2] = \sigma^2 > 0$ for $i=1,2,...,n$. To estimate the coefficients $\beta$ from $(x_i,y_i)$, we consider model averaging estimators obtained by constructing a new model through the integration of existing models, where each constituent model uses a min-norm least-squares estimator consisting of a distinct set of variables that may overlap with any of the other models’ sets. Specifically, we assume the preparation of $m$ candidate models, denoted as $M_1, M_2,..., M_m$, as follows:
\begin{align}
    M_k: \quad y = \sum_{i\in S_i} \beta_i x_i + \epsilon.
\end{align}
We estimate regression coefficients using the conventional min-norm least-squares method for each model, and obtain (\ref{moorepenrose}). After a list of candidate models is specified and their least-squares estimators are obtained, we predict the true value $\beta$ by using these estimators as follows:  
\begin{align}
    \label{modelaveraging2}
    \hat{y} = \sum_{i=1}^m w_i \hat\beta_{S_i}^\top x_{0,S_i},
\end{align}
where $w \in \mathbbm{R}^m$ is the weight vector with $\sum_{i=1}^mw_i = 1$, each $S_i$ is a subset of the features (e.g., $S_i = \{1,4,5,7,...,p-1,p\}$), $x_0$ is a given future datum, $x_{0,S}$ is the subvector of $x_0$ that contains features whose indices are in $S$, and $\hat\beta_{S_i, k}$ is the $k$-th entry of the min-norm least squares estimator $\hat\beta_{S}$ estimated using the variables represented by $x_{0,S}$. In other words,
    \begin{align}
    \label{moorepenrose}
        \hat\beta_S = X_S^{+} y,
    \end{align}
where $X_S$ denotes the $n \times |S|$ submatrix from which the columns comprising $S$ are taken. Here, $A^+$ denotes the Moore-Penrose inverse matrix of $A$. This predictor (\ref{modelaveraging2}) is equivalent to 
estimating $\beta$ using the model averaging estimator $\hat\beta_{w}$, where the $k$-th entry $\hat\beta_{w,k}$ is expressed as 
\begin{align}
    \label{modelaveraging1}
    \hat\beta_{w,k} = \sum_{i=1}^m w_i \mathbbm{1}_{\{k\in S_i\}}\hat\beta_{S_i, k}, 
\end{align}
for $k \in \mathbbm{N}$.

The selection of candidate models varies with respect to context. For several examples of candidate models, please refer to \cite{hansen2007least,hansen2012jackknife, ando2014model} and the references therein. 

In general, we consider the model averaging estimator $\hat\beta_{w,k}$, whose $k$th entry is expressed as
\begin{align}
    \hat\beta_{w,k} = \sum_{i=1}^m w_i \mathbbm{1}_{\{k\in S_i\}}\hat\beta_{S_i, k}^{T_i}, 
\end{align}
for $k\in \mathbbm{N}$, where $w\in \mathbbm{R}^m$ is the weight vector with $\sum_{i=1}^mw_i =1$, each $S_i$ is a subset of the features (e.g., $S_i = \{1,4,5,7,...,p-1,p\}$), each $T_i$ is a subset of the samples (e.g. $T_i=\{1,2,5,6,...,n-2,n\}$), and $\hat\beta_{S}^T$ is the min-norm least-squares estimator obtained by using $T_i$ and $S_i$, i.e.,
\begin{align}
        \hat\beta_S^T = X_S^{T+} y_T.
\end{align}
Here, $X_S^T$ denotes the $|T|\times |S|$ submatrix from which the rows comprising $T$ and columns comprising $S$ are taken, and $y_T$ denotes the subvector of $y$ containing features whose indices are in $T$. This estimator is equivalent to predicting the target value $y_0 = \beta^\top x_0$ using 
\begin{align}
    \label{modelaveraging3}
    \hat{y} = \sum_{i=1}^m w_i \hat\beta^{T_i\top }_{S_i}x_{0,S_i},
\end{align}
where $x_0$ is a given future datum and $x_{0,S}$ is the subvector of $x_0$ containing features whose indices are in $S$.
It is clear that the above estimators accommodate ordinal model-averaging estimators. 

This kind of estimators may emerge in the context of distributed learning. For example, let us consider the case where different feature sets are used at different locations to independently construct an estimator for the same target variable, and only the information from that estimator can be used. This study may be of some help as an answer to what weights should be used to aggregate these estimators. Also, if $T$'s are chosen uniformly at random, this would be like bagging, which are also analyzed in \cite{lejeune2020implicit, patil2022bagging}.

We analyzed this estimator under both the underparametrized $n>p$ and the overparametrized $n<p$ regime via random matrix theory (RMT), and derived optimal weight. For the estimator $\hat\beta$, we express our result in terms of the out-of-sample risk:  
\begin{align*}
    R_X(\hat\beta) \coloneqq \mathbbm{E}\left[( \hat{y}_0- \beta^\top x_0)^2\mid X\right] =  \mathbbm{E}\left[\|\hat\beta-\beta\|^2 \mid X\right],
\end{align*}
where $\hat{y}_0= \hat\beta^\top x_0$ and the expectation is taken as an independent random test sample $x_0$ from an equivalent distribution to that of the training data. Because we assume that $\mathbbm{E}[x_ix_i^\top] = I$ for $i=0,1,2,...,n$, the out-of-sample risk is reduced to an ordinal-squared risk, which can be regarded as a predictive risk.  We also observe that the double descent phenomenon occurs as a byproduct under certain conditions when using this estimator. Although the above assumptions are simple, this model reveals new insights into model averaging estimators. Furthermore, if, for example, the data $x_i \quad(i=1,...,n)$ follow a normal distribution $N(0,\Sigma)$, then under the perspective of predicting the target variable $y$, the linear model
\begin{align*}
    y= X\beta+\epsilon,
\end{align*} 
is equivalent to the following linear model:
\begin{align*}
    y=Z\Sigma^{\frac{1}{2}}\beta + \epsilon,
\end{align*}
where $Z = (z_1,...,z_n)^\top$ and $z_i \sim N(0,\mathrm{I}), \quad (i=1,...,n)$. Therefore, the analysis conducted in this study may be applicable if the data matrix is whitened.

\subsection{Contribution}
Our contributions can be summarized as follows:

    {\bf Precise analysis of model averaging estimators.} Using RMT, we calculated and characterized the goodness-of-prediction of the model averaging estimator in a linear model under the assumption that isotropic samples have been obtained. Our results also reveal that the double descent phenomenon occurs when using such an estimator. In addition, we derived the high-dimensional asymptotic behavior of each model when the samples and features were selected randomly.
    
    {\bf Optimal weights.} A model averaging estimator consists of a weight vector and several min-norm least-squares estimators. The weight vector can be optimized to achieve the prediction of true values. As we derived a precise theoretical curve for the predictive risk of this estimator, we obtained the optimal weight vector according to certain conditions assumed in this study.

The first contribution above can be regarded as the extension of Theorem 3.5 in \cite{lejeune2020implicit}, where precise asymptotic risk was obtained under the condition that the dimensionality of data does not exceed the number of samples. In contrast, we were able to deduce high-dimensional behavior of the model averaging estimator even when the dimensionality of data exceeded the number of samples.
In addition, this study partially extends the result of Theorem 1 in \cite{hastie2022surprises}, which investigated the double descent phenomenon of the min-norm least-squares estimators of linear regression. Whereas the study in question considered a single model estimator, we examined the integration of multiple model estimators.

\subsection{Framework of high-dimensional asymptotics}
This section presents relevant notations and assumptions in terms of high-dimensional asymptotics.

Unlike previous studies \cite{hansen2007least,hansen2012jackknife, ando2014model}, We use RMT to analyze the precise predictive risk of estimators. RMT enables us to describe the behavior of the eigenvalues of large
matrices (see, e.g. \cite{bai2010spectral}). These results are typically stated in terms of the spectral distribution $F_A(x) \coloneqq p^{-1}\sum^p_{k=1} \mathbbm{1}_{\{\lambda_k(A)\leq x\}}$, which is the cumulative distribution function of the eigenvalues $\lambda_k(A), k=1,...,p$ of a symmetric matrix $A$. The following high-dimensional asymptotic model is assumed for all theorems and corollaries.

\begin{assumption}[High-Dimensional Asymptotics]
\label{high-dimensional asymptotics}
    The following conditions hold:
    \begin{itemize}
        \item[1] Data $X_n \in \mathbb{R}^{n\times p}$ are generated with i.i.d. entries satisfying $\mathbbm{E}\left[X_{n,ij}\right] = 0$, $\mathrm{Var}\left[X_{n,ij}\right] = 1$ $\mathbbm{E}\left[|X_{n,ij}|^{12+\omega}\right] < \infty$ for an $\omega>0$.
        \item[2] The sample size $n\to \infty$ and dimension $p\to\infty$ as well, whereas the aspect ratio $p/n\to\gamma>0$.
        \item[3] The dimensions of candidate models $|S^n_i|\to\infty$, $|S^n_i\cap S^n_j|\to \infty$ as $n\to\infty$; conversely, $|S^n_i|/n \to \gamma_i >0$ and $|S^n_i\cap S^n_j|/n \to \gamma_{ij} > 0$ for any $i,j$.
        \item[4] The samples used in the candidate models $|T^n_i|\to\infty$ as $n\to\infty$; conversely, $|T^n_i|/n \to \eta_i>0$ and $|T^n_i\cap T^n_j|/n \to \eta_{ij}>0$ for any $i$.
        \item[5] The weight vector $w_n \in \mathbbm{R}^m$ satisfies $\sum_{i=1}^m w_{n,i} = 1$ and converges to a weight vector $w \in \mathbbm{R}^m$ as $n\to\infty$, $p\to\infty$ and $p/n\to \gamma$.
    \end{itemize}
\end{assumption}

Similar conditions have been assumed in \cite{dobriban2018high},  \cite{hastie2022surprises}, \cite{wu2020optimal} and \cite{richards2021asymptotics}.
 Unlike in previous studies (\cite{bai1998no}, \cite{bai2010spectral}, \cite{fujikoshi2022high},  \cite{dobriban2018high}, \cite{hastie2022surprises}), Assumption \ref{high-dimensional asymptotics} is more restrictive in that it includes the condition  $\mathbbm{E}\left[X_{n, ij}^{12+\omega}\right] < \infty$ for theoretical purposes. Specifically, this condition is necessary when applying the trace lemma to obtain the result of Lemma \ref{important_lemma}. However, many distributions, such as the Laplace and Gaussian distributions, satisfy these conditions. Note that the risk of the model averaging estimator does not depend only on the eigenvalues, and classical RMT methods cannot be applied under the present settings. Hence, we must prove certain RMT results that are specified in the present assumptions.

We also assume the following throughout this study:
\begin{assumption}[Deterministic Coefficients]
\label{random regression coefficients}
The regression coefficients $\beta_n \in \mathbbm{R}^p$ satisfy $\|\beta_n\|^2 = r^2$ for $r>0$.
\end{assumption}
To simplify the notation, we omit the subscript n by denoting $X_n$ as $X$, $S^n_i$ as $S_i$, $T^n_i$ as $T_i$, and $\beta_n$ as $\beta$.

\subsection{Related works}
\label{related works}
     {\bf Model averaging estimator for linear regression.} 
    Model averaging estimators have been the subject of extensive research. For example, \cite{akaike1978likelihood, akaike1979bayesian, hansen2007least, hansen2012jackknife, ando2014model} estimated target variables by weighting and adding the estimators in a linear model, with weights determined using various model selection criteria. \cite{akaike1978likelihood, akaike1979bayesian} considered the Akaike information criterion (AIC), \cite{hansen2007least} used the $C_p$ criterion (\cite{mallows2000some}), \cite{hansen2012jackknife} employed the Jackknife method, and \cite{ando2014model} considered using cross-validation to determine the weight vector. Each of these studies demonstrated that under the appropriate conditions, the weights derived using their respective selection methods were optimal for estimation. In particular, \cite{ando2014model} considered high-dimensional models and demonstrated their optimality under an unusual range of weighting definitions. However, all of these studies assumed the dimensionality of each candidate model to be less than the number of samples. Conversely, we examined behavior under a dimensionality exceeding the number of samples.

    {\bf Several model averaging methods: bagging and distributed learning.} In the present study, we categorized the data in terms of both the sample and feature indices. Such an averaging approach was also considered in \cite{lejeune2020implicit}, wherein samples and features were extracted randomly. However, we also considered the case of non-random extraction. \cite{patil2022bagging} obtained results for bagging and sub-bagging, where only samples were extracted randomly. Specifically, they examined the high-dimensional asymptotic behaviors of ridge and ridgeless estimators with respect to bagging and sub-bagging. In addition, \cite{dobriban2021distributed, dobriban2020wonder} studied model integration methods under non-random sample partitioning in the context of distributed learning.

     {\bf High-dimensional analysis via random matrix theory.} In the present study, we employed RMT \cite{bai2010spectral} to establish theoretical results. Previously, RMT has been applied for various statistical tasks. For example, \cite{ledoit2012nonlinear,ledoit2020analytical} used RMT to estimate covariance matrices. \cite{hastie2022surprises, dobriban2018high, dobriban2020wonder, dobriban2021distributed, patil2022bagging} considered the properties of ridge and ridgeless estimators, as well as linear model integration estimators, in an RMT framework. \cite{fujikoshi2022high} used RMT to examine properties related to the consistency of model selection, such as the AIC \cite{akaike1974new} and Bayesian Information Criterion (BIC, \cite{schwarz1978estimating}), for multivariate linear regression problems. \cite{hu2022misspecification} developed a method for estimating the signal-to-noise ratios (SNRs) of linear models using RMT. All aforementioned studies examined estimator behaviors and model selection criteria with high-dimensional data, demonstrating the powerful analytical performance of RMT under a high-dimensional setting.

    {\bf Double decent phenomenon in linear models.} The double descent phenomenon has been the subject of intense research, e.g. \cite{hastie2022surprises,  derezinski2020exact, patil2022bagging}. In machine learning and especially deep learning, data are often used to train models with large numbers of parameters. Conventional wisdom states that an excessive number of training parameters may lead to overfitting, deteriorating a model’s predictive performance. However, when the number of parameters exceeds a certain threshold, the opposite has been observed to occur, with predictive performance improving. This is known as the double descent phenomenon because the risk of forecasting exhibits an increase followed by a decrease (\cite{advani2020high, belkin2018reconciling}). In a linear model, increasing the number of parameters increases dimensionality. \cite{hastie2022surprises} demonstrated that when the number of samples falls below the dimensionality, predictive performance increases. \cite{derezinski2020exact} also demonstrated the double decent phenomenon in a different framework. \cite{patil2022bagging} examined the behavior of bagged ridge and ridgeless estimators in higher dimensions, with the ensemble estimator also exhibiting double descent.

\subsection{Outline}
 The remainder of this paper is organized as follows. Section 2 presents our primary theoretical results. A verification of these results through simulation studies is given in Section 3. In Section 4, we summarize our results and discuss future directions of research. Proofs of the theoretical results are provided in the Appendix.

\section{Main results}
The following subsections present the main results of this study. Proofs of the theorems used in this section are provided in the Appendix.

\subsection{Out-of-sample risk of model averaging estimators}
We begin by examining the predictive behavior of model-averaging estimators when each candidate model contains the true model. As described in the first section, we estimate $\beta$ using a model averaging estimator:
\begin{align}
    \hat\beta_{w,k} = \sum_{i=1}^m w_{n,i} \mathbbm{1}_{\{i\in S_i\}}\hat\beta^{T_i}_{S_i, k}, \nonumber
\end{align}
where $w\in \mathbbm{R}^m$ is the weight vector with $\sum_{i=1}^mw_i =1$, $S_i$ is a subset of features, and $T_i$ is a subset of samples.

We derive the following theorem for out-of-sample risk:

\begin{theorem}
\label{main_result1}
     Suppose that the assumption \ref{high-dimensional asymptotics} and \ref{random regression coefficients} holds and each candidate model contains a true model. Then, almost surely,
\begin{align}
    R_X(\hat\beta_w) \to w^\top H w,
\end{align}
where $H = (h_{ij})$ is a positive definite matrix defined as follows:
\begin{align}
    h_{ii} = 
    \begin{cases}
    \sigma^2 \frac{\gamma_i}{\eta_i-\gamma_i} &  \gamma_i < \eta_i \\
    \left(1-\frac{\eta_i}{\gamma_i}\right)r^2  + \frac{\eta_i}{\gamma_i-\eta_i}\sigma^2 & \gamma_i > \eta_i
    \end{cases} \\
    h_{ij} = 
    \begin{cases}
    \frac{\eta_{ij}\gamma_{ij}}{\eta_i\eta_j-\eta_{ij}\gamma_{ij}}\sigma^2  & \gamma_i<\eta_i, \gamma_j<\eta_j\\
    \frac{\eta_{ij}\gamma_{ij}}{\eta_{i}\gamma_{j} - \eta_{ij}\gamma_{ij}} \sigma^2 & \gamma_i < \eta_i, \eta_j < \gamma_j \\
    \frac{\eta_{ij}\gamma_{ij}}{\eta_j\gamma_{i} - \eta_{ij}\gamma_{ij}} \sigma^2 & \gamma_j < \eta_j, \eta_i < \gamma_i \\
    \frac{(\gamma_i -\eta_i)(\gamma_j-\eta_j)}{\gamma_{i}\gamma_{j}-\eta_{ij}\gamma_{ij}} r^2 + \frac{\eta_{ij}\gamma_{ij}}{\gamma_{i}\gamma_{j} - \eta_{ij}\gamma_{ij}} \sigma^2 &  \eta_i < \gamma_i, \eta_j< \gamma_j 
    \end{cases}\label{nondiag_main_result1}
\end{align}
for $i,j=1,2,...,m, i\neq j$.
\end{theorem}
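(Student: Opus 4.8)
The plan is to reduce the quadratic risk to a sum of pairwise inner products and then evaluate each pairwise term through a bias--variance split, with the asymptotics of both pieces supplied by random matrix theory. Since $\sum_i w_i = 1$, I would first write $\hat\beta_w - \beta = \sum_{i=1}^m w_i(\tilde\beta^{(i)} - \beta)$, where $\tilde\beta^{(i)}\in\mathbbm{R}^p$ is the zero-padded estimator with entries $\hat\beta^{T_i}_{S_i,k}$ on $S_i$ and $0$ off $S_i$. Because $R_X(\hat\beta_w) = \mathbbm{E}[\|\hat\beta_w-\beta\|^2\mid X]$, this yields $R_X(\hat\beta_w) = \sum_{i,j} w_iw_j\,h_{ij}^{(n)}$ with $h_{ij}^{(n)} = \mathbbm{E}[(\tilde\beta^{(i)}-\beta)^\top(\tilde\beta^{(j)}-\beta)\mid X]$, so it suffices to prove $h_{ij}^{(n)}\to h_{ij}$ almost surely. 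Introducing selection matrices $E_{S_i},E_{T_i}$, setting $X_{S_i}^{T_i}=E_{T_i}XE_{S_i}^\top$, $A_i=(X_{S_i}^{T_i})^{+}$, and $\Pi_i=A_iX_{S_i}^{T_i}$, the containment hypothesis $\mathrm{supp}(\beta)\subseteq S_i$ forces $y_{T_i}=X_{S_i}^{T_i}E_{S_i}\beta+\epsilon_{T_i}$ and hence $\tilde\beta^{(i)}-\beta = E_{S_i}^\top(\Pi_i-I)E_{S_i}\beta + E_{S_i}^\top A_iE_{T_i}\epsilon =: b_i+v_i$, a deterministic bias plus a noise part.

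Taking the expectation over $\epsilon$ (with $\mathbbm{E}[\epsilon\epsilon^\top]=\sigma^2 I$) kills the bias--noise cross terms, leaving $h_{ij}^{(n)} = b_i^\top b_j + \sigma^2\,\mathrm{tr}\!\left(A_jE_{T_j}E_{T_i}^\top A_i^\top E_{S_i}E_{S_j}^\top\right)$. For the variance term I would first handle the diagonal $i=j$: in the regime $\gamma_i<\eta_i$ one has $A_iA_i^\top=(X_{S_i}^{T_i\top}X_{S_i}^{T_i})^{-1}$, and the Marchenko--Pastur law gives $\sigma^2\mathrm{tr}(A_iA_i^\top)\to\sigma^2\gamma_i/(\eta_i-\gamma_i)$; the overparametrized diagonal and its $r^2$ contribution follow from the companion computation for $\Pi_i$, reproducing both cases of $h_{ii}$ exactly as in Theorem 1 of \cite{hastie2022surprises} applied with effective aspect ratio $\gamma_i/\eta_i$. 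The off-diagonal trace is the genuinely new object: $A_i$ and $A_j$ are pseudoinverses of two submatrices of the \emph{same} $X$ sharing the block indexed by $(T_i\cap T_j,\,S_i\cap S_j)$, so no single-resolvent law applies. Here I would invoke Lemma \ref{important_lemma}: conditioning on the shared block and exploiting the independence of the disjoint rows and columns together with the trace lemma (which is exactly where the $12+\omega$ moment bound is needed), I would show that the restricted resolvent products concentrate on deterministic equivalents, producing the stated rational functions of $\eta_i,\eta_j,\eta_{ij},\gamma_i,\gamma_j,\gamma_{ij}$ with the correct $\sigma^2$ coefficients in each of the four regimes.

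For the bias term $b_i^\top b_j = \beta^\top E_{S_i}^\top(\Pi_i-I)E_{S_i}E_{S_j}^\top(\Pi_j-I)E_{S_j}\beta$, observe that $\Pi_i=I$ whenever $X_{S_i}^{T_i}$ has full column rank, i.e. when $\gamma_i<\eta_i$; hence $b_i^\top b_j$ vanishes unless both models are overparametrized, which is precisely why the first three regimes of $h_{ij}$ in \eqref{nondiag_main_result1} carry no $r^2$ term. In the remaining regime $I-\Pi_i$ is the projection onto the kernel of $X_{S_i}^{T_i}$, of rank $|S_i|-|T_i|$; since $\mathrm{supp}(\beta)\subseteq S_i\cap S_j$ with $\|\beta\|^2=r^2$, and since the deterministic equivalent of the projection product restricted to $S_i\cap S_j$ is a scalar multiple of the identity, the quadratic form in $\beta$ reduces by quadratic-form concentration to $r^2$ times that scalar, giving the coefficient $(\gamma_i-\eta_i)(\gamma_j-\eta_j)/(\gamma_i\gamma_j-\eta_{ij}\gamma_{ij})$. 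Positive definiteness is then immediate: each $H^{(n)}=(h_{ij}^{(n)})$ is a Gram matrix because $\sum_{ij}w_iw_jh_{ij}^{(n)}=\mathbbm{E}[\|\sum_i w_i(\tilde\beta^{(i)}-\beta)\|^2\mid X]\ge 0$, so the almost-sure limit $H$ is positive semidefinite, and the non-degeneracy of distinct candidate models upgrades this to strict positive definiteness.

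The hard part will be the off-diagonal variance and bias traces. Evaluating the joint asymptotics of two pseudoinverses built from overlapping rows and columns of the same matrix lies outside classical single-matrix RMT, and it is exactly this difficulty that forces the bespoke concentration result of Lemma \ref{important_lemma} together with the heavier $12+\omega$ moment assumption; once that lemma is in hand, assembling the four regimes and taking the weighted sum is bookkeeping.
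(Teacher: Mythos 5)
Your overall architecture matches the paper's: expand the quadratic risk into pairwise terms $h_{ij}^{(n)}$, split each into a deterministic bias part and a noise part, obtain the diagonal entries from the single-matrix theory of \cite{hastie2022surprises}, and reserve the bespoke two-resolvent concentration result (Lemma \ref{important_lemma}) for the off-diagonal traces. The bias--variance bookkeeping, the observation that the $r^2$ contribution to $h_{ij}$ survives only when both models are overparametrized (because $\Pi_i = I$ under full column rank), and the Gram-matrix argument for positive semidefiniteness are all sound and consistent with what the paper does.

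The genuine gap is that you set everything up with pseudoinverses $A_i = (X_{S_i}^{T_i})^{+}$ and then propose to ``invoke Lemma \ref{important_lemma}'' directly, but that lemma (and Lemma \ref{fundamental lemma}, which you also implicitly need in order to peel the outer factors $X_{S_i}^{T_i\cap T_j}(\cdot)X_{S_j}^{T_i\cap T_j\top}$ off the variance cross-term before the two-resolvent trace lemma can act) is stated only for ridge resolvents $(n_l^{-1}Z_l^\top Z_l + \lambda I)^{-1}$ with $\lambda>0$. The min-norm estimator is the $\lambda\to 0$ limit of the ridge estimator, not a resolvent itself, so the paper's proof first regularizes (working with $\hat\beta^{T_i}_{S_i,\lambda}$), proves the almost-sure limits of $B_D$, $V_D$, $B_N$, $V_N$ for each fixed $\lambda>0$, and only then interchanges $n,p\to\infty$ with $\lambda\to 0$. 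That interchange is not free: it relies on the eigenvalue bounds of \cite{bai1998no} to show that the extreme nonzero eigenvalues of $X_{S_i}^{T_i\top}X_{S_i}^{T_i}$ stay in a fixed interval $[1/M, M]$, which yields uniform boundedness and equicontinuity of these four terms and their derivatives in $\lambda$ on a compact interval, and then an Arzel\`a--Ascoli argument. Without this step your plan has no route from the $\lambda>0$ deterministic equivalents to the $\lambda=0$ formulas in the theorem; note in particular that $e_i(-\lambda)$ itself diverges as $\lambda\to 0$ in the overparametrized regime, so the limit must be taken on the assembled expressions rather than term by term. You should add the ridge lift and the double-limit argument explicitly; once that is in place the rest of your outline goes through as the paper's proof does.
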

\begin{proof}
    The proof is provided in the Appendix.
\end{proof}

From the above Theorem, we can immediately obtain the following result t by employing Lagrange multiplier method.
\begin{corollary}
\label{corollary1}
    Define signal-to-noise ratio $\mathrm{SNR} = r^2/\sigma^2$. If we can estimate $\mathrm{SNR}$ by a consistent estimator $\hat{\Gamma}$, then 
    \begin{align*}
        w_{\mathrm{opt}} = \frac{1}{\mathbbm{1}^\top \hat{H}^{-1} \mathbbm{1} }\hat{H}^{-1}\mathbbm{1} \to \mathrm{argmin}_{w} w^\top Hw,
    \end{align*}
    where $\mathbbm{1} = (1, 1, \cdots, 1)^\top$ and $\hat{H}=(\hat{h}_{ij})$ is defined as follows:
    \begin{align*}
        \hat{h}_{ii} = 
    \begin{cases}
     \frac{\gamma_i}{\eta_i-\gamma_i} & \gamma_i < \eta_i \\
    \left(1-\frac{\eta_i}{\gamma_i}\right)\hat{\Gamma} + \frac{\eta_i}{\gamma_i-\eta_i} & \eta_i < \gamma_i
    \end{cases} \\
    \Tilde{h}_{ij}^1 = 
    \begin{cases}
    \frac{\eta_{ij}\gamma_{ij}}{\eta_i\eta_j-\eta_{ij}\gamma_{ij}} & \gamma_i<\eta_i,\gamma_j < \eta_j\\
    \frac{\eta_{ij}\gamma_{ij}}{\eta_{i}\gamma_{j} - \eta_{ij}\gamma_{ij}} & \gamma_i < \eta_i, \eta_j < \gamma_j \\
    \frac{\eta_{ij}\gamma_{ij}}{\eta_{j}\gamma_{i} - \eta_{ij}\gamma_{ij}}  & \gamma_j < \eta_j, \eta_i < \gamma_i \\
    \frac{(\gamma_i -\eta_i)(\gamma_j-\eta_j)}{\gamma_{i}\gamma_{j}-\eta_{ij}\gamma_{ij}}\hat{\Gamma} + \frac{\eta_{ij}\gamma_{ij}}{\gamma_{i}\gamma_{j} - \eta_{ij}\gamma_{ij}} &  \eta_i< \gamma_i, \eta_j<\gamma_j 
    \end{cases}
    \end{align*}
    for $i,j=1,2,...,m, i\neq j$.
\end{corollary}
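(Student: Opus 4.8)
The plan is to treat the optimization as a standard equality-constrained quadratic program and then transfer the closed-form solution to its plug-in counterpart by a continuity argument. First I would solve the idealized problem $\min_w w^\top H w$ subject to $\mathbbm{1}^\top w = 1$. Forming the Lagrangian $L(w,\lambda) = w^\top H w - \lambda(\mathbbm{1}^\top w - 1)$ and setting $\nabla_w L = 2Hw - \lambda \mathbbm{1} = 0$ gives $w = \tfrac{\lambda}{2} H^{-1}\mathbbm{1}$, where $H^{-1}$ exists because Theorem \ref{main_result1} guarantees that $H$ is positive definite. Imposing the constraint fixes $\tfrac{\lambda}{2} = (\mathbbm{1}^\top H^{-1}\mathbbm{1})^{-1}$, yielding the unique stationary point $w^\star = H^{-1}\mathbbm{1}/(\mathbbm{1}^\top H^{-1}\mathbbm{1})$. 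Positive definiteness of $H$ makes $w \mapsto w^\top H w$ strictly convex on the affine constraint set, so $w^\star$ is in fact the global minimizer, i.e. $w^\star = \mathrm{argmin}_w\, w^\top H w$.

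The second step is to observe that $\hat H$ is not an estimate of $H$ itself but of $H/\sigma^2$. Inspecting the two piecewise definitions shows that each entry $\hat h_{ij}$ coincides with $h_{ij}/\sigma^2$ once the true ratio $\mathrm{SNR} = r^2/\sigma^2$ is substituted for $\hat\Gamma$; equivalently, writing $\tilde H \coloneqq H/\sigma^2$, the entries of $\tilde H$ are exactly those of $\hat H$ with $\hat\Gamma$ replaced by $\mathrm{SNR}$. Because the constraint $\mathbbm{1}^\top w = 1$ is insensitive to positive rescaling of the objective and $w^\top H w = \sigma^2\, w^\top \tilde H w$, the minimizer is unchanged: $\mathrm{argmin}_w\, w^\top H w = \mathrm{argmin}_w\, w^\top \tilde H w = \tilde H^{-1}\mathbbm{1}/(\mathbbm{1}^\top \tilde H^{-1}\mathbbm{1}) = w^\star$. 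This is precisely what permits the optimal weight to be expressed through a scale-free matrix depending only on the SNR rather than on $r^2$ and $\sigma^2$ separately, which is what makes the plug-in estimator feasible.

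Finally I would pass from $\tilde H$ to its plug-in version $\hat H$. Since $\hat\Gamma$ is a consistent estimator of $\mathrm{SNR}$ and each entry of $\hat H$ is a continuous (indeed affine) function of $\hat\Gamma$, we have $\hat H \to \tilde H$ entrywise in the same mode of convergence as $\hat\Gamma$. The map $A \mapsto A^{-1}\mathbbm{1}/(\mathbbm{1}^\top A^{-1}\mathbbm{1})$ is continuous at any positive definite $A$, and $\tilde H = H/\sigma^2$ is positive definite; hence by the continuous mapping theorem $w_{\mathrm{opt}} = \hat H^{-1}\mathbbm{1}/(\mathbbm{1}^\top \hat H^{-1}\mathbbm{1}) \to w^\star = \mathrm{argmin}_w\, w^\top H w$, which is the claim. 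The only genuinely delicate points are bookkeeping rather than analysis: correctly matching the four case-splits of $\hat H$ against $H/\sigma^2$ so that the scale-invariance identity holds verbatim, and checking $\mathbbm{1}^\top \tilde H^{-1}\mathbbm{1} \neq 0$ (immediate from positive definiteness) so that the normalization and the continuity argument are well-posed. I expect no substantive obstacle beyond these, consistent with the paper's characterization of the corollary as immediate from Theorem \ref{main_result1}.
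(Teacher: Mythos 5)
Your proposal is correct and follows essentially the same route the paper intends: the paper derives the corollary ``immediately \ldots by employing Lagrange multiplier method,'' which is exactly your constrained quadratic program, and your remaining steps (noting that $\hat H$ with $\hat\Gamma$ replaced by $\mathrm{SNR}$ equals $H/\sigma^2$, that the constrained minimizer is invariant under this positive rescaling, and that consistency of $\hat\Gamma$ plus continuity of $A \mapsto A^{-1}\mathbbm{1}/(\mathbbm{1}^\top A^{-1}\mathbbm{1})$ at a positive definite matrix gives the convergence) are precisely the details the paper leaves implicit. No gap.
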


The problem of estimating $\mathrm{SNR}$ has been studied extensively ( refer to  \cite{jiang2007linear, dicker2014variance, jiang2016high, janson2017eigenprism} for examples). More recently, have been derived under high-dimensional asymptotics using RMT (\cite{hu2022misspecification}). For practical purposes, it is also necessary to estimate $\gamma_i$ and $\eta_i$, which can be estimated in $|S_i|/n$ and $|T_i|/n$, respectively.
In the following subsection, we consider more realistic situations wherein some models are misspecified.

\subsection{Out-of-sample risk of model averaging estimators: misspecified case}
In this section, we consider the case where some candidate models do not include the entire true model. Letting $\beta$ denote the regression coefficient of the true model and assuming $\|\beta_{S_i}\|^2 = r^2\kappa_i$ and $\|\beta_{(S_i\cap S_j)}\|^2 = r^2\kappa_{ij}$, where $\kappa_i \in (0,1), \kappa_{ij}\in(0,1)$ for $i,j=1,...,m, i\neq j$, we obtain the following result:

\begin{theorem}
\label{main_result2}
        Suppose that assumption \ref{high-dimensional asymptotics}, \ref{random regression coefficients} hold. Then, almost surely,
\begin{align}
    \mathbbm{E}\left[R_X(\hat\beta_w)\right] \to w^\top H_\mathrm{mis} w,
\end{align}
where the expectation is taken over all $X$ and $H_\mathrm{mis} = (h_{\mathrm{mis},ij})$ is a positive definite matrix defined as follows:
\begin{align*}
    h_{\mathrm{mis},ii} = 
    \begin{cases}
    \frac{\eta_i}{\eta_i-\gamma_i}r^2(1-\kappa_i)+ \frac{\gamma_i}{\eta_i-\gamma_i}\sigma^2 & \gamma_i < \eta_i \\
    \frac{\gamma_i}{\gamma_i-\eta_i}r^2(1-\kappa_i) + \left(1-\frac{\eta_i}{\gamma_i}\right) r^2\kappa_i + \frac{\eta_i}{\gamma_i-\eta_i} \sigma^2 & \eta_i < \gamma_i
    \end{cases}
\end{align*}
\begin{align}
    h_{\mathrm{mis},ij} = 
    \begin{cases}
    \frac{\eta_i\eta_j}{\eta_i\eta_j-\eta_{ij}\gamma_{ij}}r^2(1-\kappa_i-\kappa_j + \kappa_{ij}) + \frac{\eta_{ij}\gamma_{ij}}{\eta_{i}\eta_{j}-\eta_{ij}\gamma_{ij}} \sigma^2  & \gamma_i < \eta_i, \gamma_j < \eta_j\\
    \frac{\eta_i\gamma_{j}}{\eta_i\gamma_{j} - \eta_{ij}\gamma_{ij}}r^2(1-\kappa_i-\kappa_j + \kappa_{ij})+  \frac{\eta_i(\gamma_{j}-\eta_j)}{(\eta_i\gamma_j-\eta_{ij}\gamma_{ij})}r^2(\kappa_j -\kappa_{ij}) +\frac{\eta_{ij}\gamma_{ij}}{\eta_{i}\gamma_{j} - \eta_{ij}\gamma_{ij}} \sigma^2 & \gamma_i < \eta_i, \eta_j < \gamma_j \\
    \frac{\eta_j\gamma_{i}}{\eta_j\gamma_{i} - \eta_{ij}\gamma_{ij}}r^2(1-\kappa_i-\kappa_j + \kappa_{ij}) +\frac{\eta_j(\gamma_{i}-\eta_i)}{\eta_j\gamma_i-\eta_{ij}\gamma_{ij}}r^2(\kappa_i -\kappa_{ij})+ \frac{\eta_{ij}\gamma_{ij}}{\eta_i\gamma_{i} - \eta_{ij}\gamma_{ij}} \sigma^2 & \gamma_j < \eta_i, \eta_j < \gamma_i \\
    \frac{\gamma_{i}\gamma_j}{\gamma_{i}\gamma_{j} - \eta_{ij}\gamma_{ij}} r^2(1-\kappa_i-\kappa_j + \kappa_{ij}) +\frac{\gamma_{i}(\gamma_{j}-\eta_j)}{\gamma_i\gamma_j-\eta_{ij}\gamma_{ij}}r^2(\kappa_j -\kappa_{ij})  +\frac{\gamma_{j}(\gamma_{i}-\eta_i)}{\gamma_i\gamma_j-\eta_{ij}\gamma_{ij}}r^2(\kappa_i -\kappa_{ij}) \\
    \quad \quad \quad+ \frac{(\gamma_i -\eta_i)(\gamma_j-\eta_j)}{\gamma_{i}\gamma_{j}-\eta_{ij}\gamma_{ij}} r^2\kappa_{ij} + \frac{\eta_{ij}\gamma_{ij}}{\gamma_{i}\gamma_{j} - \eta_{ij}\gamma_{ij}} \sigma^2 &  \eta_i < \gamma_i, \eta_j<\gamma_j 
    \end{cases}\label{nondiag_main_result2}
\end{align}
for $i,j=1,2,...,m, i\neq j$.
    \end{theorem}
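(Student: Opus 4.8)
\section*{Proof proposal for Theorem \ref{main_result2}}

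The plan is to reduce the limiting risk to a bilinear form and evaluate its entries region by region. Writing $\tilde{\beta}_i\in\mathbbm{R}^p$ for the zero-padded estimator with $\tilde{\beta}_{i,k}=\mathbbm{1}_{\{k\in S_i\}}\hat\beta^{T_i}_{S_i,k}$, we have $\hat\beta_w=\sum_i w_i\tilde{\beta}_i$ and, since $\sum_i w_i=1$, also $\hat\beta_w-\beta=\sum_i w_i(\tilde{\beta}_i-\beta)$; hence $R_X(\hat\beta_w)=\sum_{i,j}w_iw_j\langle\tilde{\beta}_i-\beta,\tilde{\beta}_j-\beta\rangle$, and it suffices to identify the limit $h_{\mathrm{mis},ij}$ of $\mathbbm{E}[\langle\tilde{\beta}_i-\beta,\tilde{\beta}_j-\beta\rangle]$. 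Positive semidefiniteness of $H_\mathrm{mis}$ is then automatic as a limit of Gram matrices, with strictness following from the models being distinct. I would split $\{1,\dots,p\}$ into $A=S_i\cap S_j$, $B=S_i\setminus S_j$, $C=S_j\setminus S_i$, and $D=(S_i\cup S_j)^c$, so the inner product decomposes into an $A$-term where both estimators are active, a $B$- and a $C$-term where one estimator meets the bare truth $-\beta_k$, and a $D$-term equal to $\|\beta_D\|^2=r^2(1-\kappa_i-\kappa_j+\kappa_{ij})$ by inclusion--exclusion.

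For the diagonal ($i=j$) this collapses to $\|\tilde{\beta}_i-\beta\|^2=\|\hat\beta^{T_i}_{S_i}-\beta_{S_i}\|^2+\|\beta_{S_i^c}\|^2$, where $\|\beta_{S_i^c}\|^2=r^2(1-\kappa_i)$ is a fixed bias. The key device is that, by isotropy of $X$ and independence of distinct columns, the out-of-model term $X^{T_i}_{S_i^c}\beta_{S_i^c}$ acts like isotropic noise of variance $r^2(1-\kappa_i)$ independent of $X^{T_i}_{S_i}$; the in-model error $\hat\beta^{T_i}_{S_i}-\beta_{S_i}$ is then exactly that of a \emph{well-specified} min-norm least-squares problem with signal $r^2\kappa_i$ and effective noise level $\sigma^2+r^2(1-\kappa_i)$. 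Applying the single-model computation underlying Theorem \ref{main_result1} (the Marchenko--Pastur evaluation of $\operatorname{tr}[((X^{T_i}_{S_i})^\top X^{T_i}_{S_i})^{-1}]$ for $\gamma_i<\eta_i$, and the projection bias $(1-\eta_i/\gamma_i)r^2\kappa_i$ together with $\operatorname{tr}[(X^{T_i}_{S_i}(X^{T_i}_{S_i})^\top)^{-1}]$ for $\eta_i<\gamma_i$) and adding the fixed bias reproduces both branches of $h_{\mathrm{mis},ii}$ after elementary algebra.

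The off-diagonal entries carry the real work. On each region I would expand the in-model error of model $k$ as $\hat\beta^{T_k}_{S_k}-\beta_{S_k}=(\Pi_{S_k}-I)\beta_{S_k}+(X^{T_k}_{S_k})^+\xi_{T_k}$, where $\Pi_{S_k}=(X^{T_k}_{S_k})^+X^{T_k}_{S_k}$ equals $I$ when $\gamma_k<\eta_k$ and is a random projection onto a $|T_k|$-dimensional subspace when $\eta_k<\gamma_k$, and $\xi_{T_k}=X^{T_k}_{S_k^c}\beta_{S_k^c}+\epsilon_{T_k}$. Expectation over the noise kills the bias$\times$noise cross terms, leaving a bias$\times$bias and a noise$\times$noise piece. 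After taking the expectation over $\xi$, the latter is a quantity of the form $(\sigma^2+r^2(1-\kappa_i-\kappa_j+\kappa_{ij}))\,\operatorname{tr}[P_A (X^{T_i}_{S_i})^{+} J_{ij}\,((X^{T_j}_{S_j})^{+})^{\top}]$, in which $J_{ij}$ restricts to the shared samples $T_i\cap T_j$ and $P_A$ to the shared features $A$ (the coefficient coming from $\|\beta_D\|^2+\sigma^2$); evaluating its limit through the trace lemma (Lemma \ref{important_lemma}) and the deterministic equivalents for two sample-covariance matrices sharing rows and columns yields $\eta_{ij}\gamma_{ij}$ divided by $\eta_i\eta_j-\eta_{ij}\gamma_{ij}$, $\eta_i\gamma_j-\eta_{ij}\gamma_{ij}$, $\eta_j\gamma_i-\eta_{ij}\gamma_{ij}$, or $\gamma_i\gamma_j-\eta_{ij}\gamma_{ij}$ according to the four regimes $\gamma_i\lessgtr\eta_i$, $\gamma_j\lessgtr\eta_j$. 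Together with the $D$-term this produces the $\sigma^2$ and $(1-\kappa_i-\kappa_j+\kappa_{ij})$ coefficients; the $B$- and $C$-contributions, combined with the correlation between one model's out-of-model columns (those in $C$, resp. $B$) and the other model's design, supply the $(\kappa_i-\kappa_{ij})$ and $(\kappa_j-\kappa_{ij})$ factors; and the joint law of the two projections on $A$ gives the $\kappa_{ij}$ factor. Collecting terms in each case yields (\ref{nondiag_main_result2}).

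I expect the main obstacle to be exactly this coupled random-matrix analysis: unlike in Theorem \ref{main_result1}, these are not functions of the spectrum of one matrix but bilinear functionals of two pseudo-inverses whose underlying matrices share a random submatrix, and moreover one model's effective noise $\xi_{T_i}$ is correlated with the other model's design through the columns in $C$ (resp. $B$). Establishing a deterministic equivalent for such functionals---precisely the role of the strengthened moment condition $\mathbbm{E}[|X_{n,ij}|^{12+\omega}]<\infty$ in Assumption \ref{high-dimensional asymptotics}, invoked for Lemma \ref{important_lemma}---is the crux. The overparametrized--overparametrized case is the most delicate, since both projection operators are nontrivial and the bias$\times$bias term demands the joint distribution of two random projections restricted to the shared coordinates $A$.
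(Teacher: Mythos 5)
Your proposal follows essentially the same route as the paper's proof: the paper likewise reduces to the pairwise cross-moments, absorbs the out-of-model signal $X^{T_l}_{S_l^c\cap S_m^c}\beta_{S_l^c\cap S_m^c}+\epsilon_{T_l}$ into an effective noise so that the machinery of Theorem \ref{main_result1} and Lemma \ref{important_lemma} applies to the leading term, and then handles the remaining contributions from $S_i\setminus S_j$ and $S_j\setminus S_i$ (its terms $\Delta_2,\Delta_3,\Delta_4,\Delta_6,\Delta_7$) by separate leave-one-out arguments. One caution: your intermediate claim that ``expectation over the noise kills the bias$\times$noise cross terms'' holds only for the genuine noise $\epsilon$; the components of $\xi_{T_i}$ coming from columns in $S_j\setminus S_i$ are correlated with model $j$'s design through the shared rows, and these non-vanishing cross terms are exactly what generate the $(\kappa_j-\kappa_{ij})$ and $(\kappa_i-\kappa_{ij})$ contributions --- which you do recover in your final accounting, so this is an imprecision of wording rather than a gap in the plan.
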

    \begin{proof}
        The proof is provided in the Appendix.
    \end{proof}
As before, this theorem can be combined with Lagrange's undetermined multiplier method to deduce the following result:

\begin{corollary}
\label{corollary2}
    Define signal-to-noise ratio $\mathrm{SNR} = r^2/\sigma^2$,$\mathrm{SNR}_i = r^2\kappa_i/\sigma^2$ and $\mathrm{SNR}_{ij} = r^2\kappa_{ij}/\sigma^2$ for $i,j=1,2,...,m, i\neq j$. If we can estimate $\mathrm{SNR} = r^2/\sigma^2,\mathrm{SNR}_i = r^2\kappa_i/\sigma^2,\mathrm{SNR}_{ij} = r^2\kappa_{ij}/\sigma^2$ by consistent estimators $\hat{\Gamma}, \hat{\Gamma}_i, \hat{\Gamma}_{ij}$, respectively, for $i,j=1,2,...,m, i\neq j$, then 
    \begin{align*}
        w_{\mathrm{opt}} = \frac{1}{\mathbbm{1}^\top \hat{H}_\mathrm{mis}^{-1} \mathbbm{1} }\hat{H}_\mathrm{mis}^{-1}\mathbbm{1} \to \mathrm{argmin}_{w} w^\top H_\mathrm{mis}w,
    \end{align*}
    where $\mathbbm{1} = (1, 1, \cdots, 1)^\top$ and $\hat{H}_\mathrm{mis}=(\hat{h}_{\mathrm{mis},ij})$ is defined as follows:
    \begin{align*}
    \hat{h}_{\mathrm{mis},ii} = 
    \begin{cases}
    \frac{\eta_i}{\eta_i-\gamma_i}(\hat\Gamma-\hat{\Gamma}_i)+ \frac{\gamma_i}{\eta_i-\gamma_i} & \gamma_i < \eta_i \\
    \frac{\gamma_i}{\gamma_i-\eta_i}(\hat\Gamma-\hat{\Gamma}_i) + \left(1-\frac{\eta_i}{\gamma_i}\right) \hat{\Gamma}_i + \frac{\eta_i}{\gamma_i-\eta_i}  & \eta_i < \gamma_i
    \end{cases}
\end{align*}
\begin{align*}
    \hat{h}_{\mathrm{mis},ij} = 
    \begin{cases}
    \frac{\eta_i\eta_j}{\eta_i\eta_j-\eta_{ij}\gamma_{ij}}(\hat{\Gamma}-\hat{\Gamma}_i-\hat{\Gamma}_j + \hat{\Gamma}_{ij}) + \frac{\eta_{ij}\gamma_{ij}}{\eta_{i}\eta_{j}-\eta_{ij}\gamma_{ij}}  & \gamma_i < \eta_i, \gamma_j < \eta_j\\
    \frac{\eta_i\gamma_{j}}{\eta_i\gamma_{j} - \eta_{ij}\gamma_{ij}}(\hat{\Gamma}-\hat{\Gamma}_i-\hat{\Gamma}_j + \hat{\Gamma}_{ij})+  \frac{\eta_i(\gamma_{j}-\eta_j)}{(\eta_i\gamma_j-\eta_{ij}\gamma_{ij}})(\hat{\Gamma}_j -\hat{\Gamma}_{ij}) +\frac{\eta_{ij}\gamma_{ij}}{\eta_{i}\gamma_{j} - \eta_{ij}\gamma_{ij}}  & \gamma_i < \eta_i, \eta_j < \gamma_j \\
    \frac{\eta_j\gamma_{i}}{\eta_j\gamma_{i} - \eta_{ij}\gamma_{ij}}(\hat{\Gamma}-\hat{\Gamma}_i-\hat{\Gamma}_j + \hat{\Gamma}_{ij}) +\frac{\eta_j(\gamma_{i}-\eta_i)}{\eta_j\gamma_i-\eta_{ij}\gamma_{ij}}(\hat{\Gamma}_i -\hat{\Gamma}_{ij})+ \frac{\eta_{ij}\gamma_{ij}}{\eta_i\gamma_{i} - \eta_{ij}\gamma_{ij}}  & \gamma_j < \eta_i, \eta_j < \gamma_i \\
    \frac{\gamma_{i}\gamma_j}{\gamma_{i}\gamma_{j} - \eta_{ij}\gamma_{ij}} (\hat{\Gamma}-\hat{\Gamma}_i-\hat{\Gamma}_j + \hat{\Gamma}_{ij}) +\frac{\gamma_{i}(\gamma_{j}-\eta_j)}{\gamma_i\gamma_j-\eta_{ij}\gamma_{ij}}(\hat{\Gamma}_j -\hat{\Gamma}_{ij}) +\frac{\gamma_{i}(\gamma_{j}-\eta_j)}{\gamma_i\gamma_j-\eta_{ij}\gamma_{ij}}(\hat{\Gamma}_i -\hat{\Gamma}_{ij}) \\
    \quad \quad \quad+ \frac{(\gamma_i -\eta_i)(\gamma_j-\eta_j)}{\gamma_{i}\gamma_{j}-\eta_{ij}\gamma_{ij}} \hat\Gamma_{ij} + \frac{\eta_{ij}\gamma_{ij}}{\gamma_{i}\gamma_{j} - \eta_{ij}\gamma_{ij}}  &  \eta_i < \gamma_i, \eta_j<\gamma_j 
    \end{cases}
\end{align*}
    for $i,j=1,2,...,m, i\neq j$.
\end{corollary}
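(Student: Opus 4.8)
The plan is to split Corollary \ref{corollary2} into two essentially independent parts: an exact optimization identity for the population matrix $H_\mathrm{mis}$, and a consistency/continuity argument that transfers this identity to the plug-in matrix $\hat{H}_\mathrm{mis}$.

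First I would solve the population problem $\min_w w^\top H_\mathrm{mis} w$ subject to $\mathbbm{1}^\top w = 1$ by the Lagrange multiplier method announced in the statement. Forming $L(w,\lambda) = w^\top H_\mathrm{mis} w - \lambda(\mathbbm{1}^\top w - 1)$ and setting $\nabla_w L = 2 H_\mathrm{mis} w - \lambda \mathbbm{1} = 0$ gives $w = (\lambda/2) H_\mathrm{mis}^{-1}\mathbbm{1}$; imposing the constraint fixes $\lambda/2 = 1/(\mathbbm{1}^\top H_\mathrm{mis}^{-1}\mathbbm{1})$, so that
\begin{align*}
    \mathrm{argmin}_w\, w^\top H_\mathrm{mis} w = \frac{H_\mathrm{mis}^{-1}\mathbbm{1}}{\mathbbm{1}^\top H_\mathrm{mis}^{-1}\mathbbm{1}}.
\end{align*}
Since $H_\mathrm{mis}$ is positive definite by Theorem \ref{main_result2}, the Hessian $2H_\mathrm{mis}$ is positive definite, this critical point is the unique global minimizer over the affine constraint set, and the denominator is strictly positive, so the expression is well defined.

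Next I would record the scaling identity linking $H_\mathrm{mis}$ to $\hat{H}_\mathrm{mis}$. Writing $G$ for the matrix obtained from $\hat{H}_\mathrm{mis}$ by substituting the true ratios $\hat\Gamma \mapsto r^2/\sigma^2$, $\hat\Gamma_i \mapsto r^2\kappa_i/\sigma^2$, $\hat\Gamma_{ij} \mapsto r^2\kappa_{ij}/\sigma^2$, a case-by-case comparison of the entries in Theorem \ref{main_result2} and Corollary \ref{corollary2} shows $H_\mathrm{mis} = \sigma^2 G$: in each of the two diagonal cases and the four off-diagonal cases, the factor $\sigma^2$ is exactly what converts every coefficient of a ratio back into the corresponding coefficient of $r^2$, $r^2\kappa_i$, or $r^2\kappa_{ij}$ (for instance $\hat\Gamma - \hat\Gamma_i - \hat\Gamma_j + \hat\Gamma_{ij}$ becomes $r^2(1-\kappa_i-\kappa_j+\kappa_{ij})/\sigma^2$), while the lone additive noise term $\eta_{ij}\gamma_{ij}/(\cdots)$ picks up the explicit $\sigma^2$. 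Because the minimizer formula is invariant under multiplication of the matrix by a positive scalar — the factor $\sigma^{-2}$ in $H_\mathrm{mis}^{-1} = \sigma^{-2} G^{-1}$ cancels between numerator and denominator — we obtain $\mathrm{argmin}_w w^\top H_\mathrm{mis} w = G^{-1}\mathbbm{1}/(\mathbbm{1}^\top G^{-1}\mathbbm{1})$.

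Finally I would close the gap between $\hat{H}_\mathrm{mis}$ and its limit $G$ by continuity. Each entry of $\hat{H}_\mathrm{mis}$ is a fixed rational function of $(\hat\Gamma, \hat\Gamma_i, \hat\Gamma_{ij})$ whose denominators depend only on the fixed ratios $\gamma_i,\eta_i,\gamma_{ij},\eta_{ij}$ and are nonvanishing by the positive-definiteness already established; by consistency of the estimators these arguments converge to the true ratios, so $\hat{H}_\mathrm{mis} \to G$ entrywise. Since $G = H_\mathrm{mis}/\sigma^2$ is positive definite and the set of positive definite matrices is open, the map $A \mapsto A^{-1}\mathbbm{1}/(\mathbbm{1}^\top A^{-1}\mathbbm{1})$ is continuous in a neighborhood of $G$, and the continuous mapping theorem yields
\begin{align*}
    w_{\mathrm{opt}} = \frac{\hat{H}_\mathrm{mis}^{-1}\mathbbm{1}}{\mathbbm{1}^\top \hat{H}_\mathrm{mis}^{-1}\mathbbm{1}} \longrightarrow \frac{G^{-1}\mathbbm{1}}{\mathbbm{1}^\top G^{-1}\mathbbm{1}} = \mathrm{argmin}_w\, w^\top H_\mathrm{mis} w,
\end{align*}
in the same mode of convergence as the estimators. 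I expect the only genuine labor to be the bookkeeping of the scaling identity $H_\mathrm{mis} = \sigma^2 G$ across all six entry cases; the optimization step and the continuity step are routine, and the single point requiring care is confirming that the rational entries of $\hat{H}_\mathrm{mis}$ remain bounded away from their poles, which is guaranteed once the limit matrix is positive definite.
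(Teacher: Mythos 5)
Your proposal is correct and follows the same route the paper intends: the paper derives the corollary from Theorem \ref{main_result2} solely via the Lagrange multiplier method, which is exactly your first step, and your remaining two steps (the scale identity $H_\mathrm{mis}=\sigma^2 G$ together with scale-invariance of the minimizer formula, and the continuity/consistency argument for the plug-in matrix) merely make explicit what the paper leaves implicit. No substantive difference in approach.
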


It is easy to find consistent estimators of $\hat\Gamma_i$'s using a technique similar to that in \cite{dicker2014variance}, where the noise variance $\sigma^2$ and overall signal strength $r^2$ were estimated using
\begin{align}
\label{signalnoise}
    \hat\sigma^2 = \frac{p+n+1}{n(n+1)}\|y\|^2 - \frac{1}{n(n+1)}\|X^\top y\|^2, \quad \hat{r}^2 = -\frac{p}{n(n+1)} \|y\|^2 + \frac{1}{n(n+1)}\|X^\top y\|^2,  
\end{align}
respectively. If a model is specified and each element of $X$ follows an i.i.d. normal distribution with mean $0$ and variance $1$, then the above estimators (\ref{signalnoise}) are unbiased. Hence, we can estimate $\mathrm{SNR}$ using $\hat{r}^2/\hat\sigma^2$. These estimators can be determined using the following identities:
\begin{align}
    \mathbbm{E}\left[\frac{1}{n}\|y\|^2\right] = r^2 +\sigma^2, \quad \mathbbm{E}\left[\frac{1}{n^2}\|X^\top y\|^2\right] = \frac{p+n+1}{n}r^2 + \frac{p}{n} \sigma^2.
\end{align}
Under the same assumption as above (i.e., the model is specified and each element of $X$ follows an i.i.d. normal distribution with mean $0$ and variance $1$), we can also obtain the following result given an $n\times p_1$ submatrix $X_1$ of matrix $X = (X_1, X_2)$:
\begin{align}
    \mathbbm{E}\left[\frac{1}{n^2}\|X_1^\top y\|^2\right] &= \frac{n+1}{n}r_1^2 + \frac{p_1}{n} (r^2 + \sigma^2)\nonumber\\
    &= \frac{n+1}{n}r_1^2 + \frac{p_1}{n} \mathbbm{E}\left[\frac{1}{n}\|y\|^2\right] 
\end{align}
where $r_1^2$ is the signal strength of subvector $\beta_1$ of $\beta$ corresponding to $X_1$. Therefore, we can estimate $r_1^2/\sigma^2$ using
\begin{align}
    \frac{1}{n(n + 1)\hat\sigma^2}\|X_1^\top y\|^2 - \frac{p_1}{n(n + 1)\hat\sigma^2}\|y\|^2.
\end{align}
We can easily verify the consistency of this estimator under the assumption \ref{high-dimensional asymptotics}.

\subsection{Out-of-sample risk of model averaging estimators: linear ensemble case}
In this section, we consider the case in which candidate models are chosen uniformly and randomly. Specifically, we define the following assumptions:
\begin{assumption}
\label{assumptions random subset}
We assume the following hold:
    \begin{itemize}
        \item[1.] $\mathrm{Pr}(j\in S_i) = |S_i|/p$ for all j=1,...,p.
        \item[2.] $\mathrm{Pr}(j\in T_i) = |T_i|/n$ for all j=1,...,n.
        \item[3.] $|S_i|/p \to \alpha$ as $n,p\to\infty$ for any $i$.
        \item[4.] $ |T_i|/n \to \eta$ as $n,p\to\infty$ for any $i$.
    \end{itemize}
\end{assumption}
Furthermore, we consider the case where $w_i=1/m$ for all $i=1,...,m$. This estimator can be regarded as a linear version of the random forest. The following lemma is helpful in obtaining the asymptotic behavior of the estimator $\hat\beta_{\mathrm{ens}}$:
\begin{lemma}
    For any Lipschitz continuous function $f$,  
    \begin{align}
        \mathbbm{E}\left[f(|S_i\cap S_j|/p)\right] \to f(\alpha^2). 
    \end{align}
    \begin{align}
        \mathbbm{E}\left[f(|T_i\cap T_j|/n)\right] \to f(\eta^2). 
    \end{align}
\end{lemma}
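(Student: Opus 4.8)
The plan is to show that the normalized overlaps $|S_i\cap S_j|/p$ and $|T_i\cap T_j|/n$ concentrate at the deterministic limits $\alpha^2$ and $\eta^2$, and then to push this concentration through $f$ using nothing more than its Lipschitz constant. First I would write $U_p \coloneqq |S_i\cap S_j|/p = p^{-1}\sum_{k=1}^p \mathbbm{1}_{\{k\in S_i\}}\mathbbm{1}_{\{k\in S_j\}}$ and observe that, because distinct candidate models draw their feature sets independently (as in the random-forest-type ensemble of this subsection) and each indicator $\mathbbm{1}_{\{k\in S_i\}}$ has mean $|S_i|/p$ by Assumption \ref{assumptions random subset}, the mean factorizes:
\begin{align*}
\mathbbm{E}[U_p] = \frac{1}{p}\sum_{k=1}^p \frac{|S_i|}{p}\,\frac{|S_j|}{p} = \frac{|S_i|}{p}\,\frac{|S_j|}{p} \to \alpha^2,
\end{align*}
using $|S_i|/p\to\alpha$ and $|S_j|/p\to\alpha$.

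The second step is to bound the variance of $U_p$ and show it vanishes. Expanding $\mathrm{Var}(U_p) = p^{-2}\sum_{k,l}\mathrm{Cov}\!\left(\mathbbm{1}_{\{k\in S_i\cap S_j\}},\mathbbm{1}_{\{l\in S_i\cap S_j\}}\right)$, the $p$ diagonal terms are each bounded and contribute $O(p)$. The off-diagonal terms are controlled by the second-order inclusion probabilities $\mathrm{Pr}(k,l\in S_i) = |S_i|(|S_i|-1)/(p(p-1))$, which encode the usual negative correlation of sampling a fixed-size subset without replacement; a direct computation shows each off-diagonal covariance is $O(1/p)$, so the $O(p^2)$ of them again sum to $O(p)$. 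Hence $\mathrm{Var}(U_p)=O(1/p)\to 0$.

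The final step transfers this to the statement. Since $U_p$ and $\alpha^2$ lie in $[0,1]$ and $f$ is $L$-Lipschitz there, Jensen's inequality gives
\begin{align*}
\bigl|\mathbbm{E}[f(U_p)] - f(\alpha^2)\bigr| \le \mathbbm{E}\bigl|f(U_p)-f(\alpha^2)\bigr| \le L\,\mathbbm{E}\bigl|U_p-\alpha^2\bigr| \le L\sqrt{\mathrm{Var}(U_p) + \bigl(\mathbbm{E}[U_p]-\alpha^2\bigr)^2},
\end{align*}
and both terms under the root vanish by the first two steps, so $\mathbbm{E}[f(U_p)]\to f(\alpha^2)$. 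The sample statement $\mathbbm{E}[f(|T_i\cap T_j|/n)]\to f(\eta^2)$ follows verbatim with $p$ replaced by $n$, $S$ by $T$, and $\alpha$ by $\eta$.

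The only genuinely delicate point, and the one I would spend care on, is the variance bound: the fixed-size (without-replacement) sampling of $S_i$ makes the indicators $\mathbbm{1}_{\{k\in S_i\}}$ dependent across $k$, so one must verify that the induced pairwise covariances are $O(1/p)$ rather than $O(1)$, which is exactly what keeps the normalized variance summable to zero. Were the subsets instead generated by independent Bernoulli inclusion with probability $\alpha$, the indicators would be exactly independent and the variance bound immediate; the price is that $|S_i|$ becomes random, in which case I would first replace it by its concentrating value $\alpha p$ before running the same argument.
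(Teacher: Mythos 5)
Your proposal is correct and follows essentially the same route as the paper: both compute $\mathbbm{E}[|S_i\cap S_j|/p] = |S_i||S_j|/p^2 \to \alpha^2$, show the (conditional) variance is $O(1/p)$, and transfer the resulting $L^1$-concentration through the Lipschitz constant of $f$. The only difference is that you spell out the without-replacement covariance computation that the paper dismisses with ``this can be easily observed,'' which is a worthwhile addition rather than a deviation.
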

\begin{proof}
    Because $\mathbbm{E}\left[\left.\frac{|S_i\cap S_j|}{p} \right||S_i|,|S_j|\right] = \frac{|S_i| |S_j|}{p^2} \to\alpha^2$, we only have to prove the following:
    \begin{align}
        \mathbbm{E}\left[\left.\left|\frac{|S_i\cap S_j|}{p} -\mathbbm{E}\left[\left.\frac{|S_i\cap S_j|}{p}\right||S_i|, |S_j|\right]\right|\right||S_i|, |S_j|\right]  \to 0.
    \end{align}
    This can be easily observed in
    \begin{align*}
        \mathrm{Var}\left(\left.\frac{|S_i\cap S_j|}{p}\right||S_i|, |S_j|\right)  = O\left(\frac{1}{p}\right).
    \end{align*}
    The latter part of the claim can be demonstrated in a similar manner, thereby completing the Proof. 
\end{proof}
From the above Lemma, we obtain the following result together with Theorem \ref{main_result2} and its proof.

\begin{theorem}
    \label{main_result3}
        Suppose that assumption \ref{high-dimensional asymptotics}, \ref{random regression coefficients} and \ref{assumptions random subset} hold. Then, almost surely,
\begin{align}
    \mathbbm{E}\left[R_X(\hat\beta_{\mathrm{ens}})\right] \to \frac{1}{m}D + \frac{m-1}{m}N, \label{ens_risk_main_result3}
\end{align}
where the expectation is taken over $X$, $S$'s and $T$'s; and $D$ and $N$ are defined as,
\begin{align*}
    D = 
    \begin{cases}
    \frac{\eta(1-\alpha)}{\eta-\gamma\alpha}r^2+ \sigma^2 \frac{\gamma\alpha}{\eta-\gamma\alpha} & \gamma\alpha < \eta, \\
    \frac{\gamma\alpha(1-\alpha)}{\gamma\alpha -\eta }r^2 + \left(1-\frac{\eta}{\gamma\alpha}\right) \alpha r^2  + \frac{\eta}{\gamma\alpha-\eta} \sigma^2 & \eta< \gamma\alpha,
    \end{cases} 
\end{align*}
\begin{align*}
    N = 
    \begin{cases}
    \frac{(1-\alpha)^2}{1-\gamma\alpha^2}r^2 + \frac{\gamma\alpha^2}{(1-\gamma\alpha^2)} \sigma^2  & \gamma\alpha < \eta,\\
    \frac{(\gamma-\eta)^2}{\gamma(\gamma-\eta^2)}r^2  + \frac{\eta^2}{\gamma - \eta^2} \sigma^2 &  \eta < \gamma\alpha ,
    \end{cases}
\end{align*}
for $i,j=1,2,...,m, i\neq j$.
\end{theorem}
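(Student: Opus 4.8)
The plan is to derive Theorem \ref{main_result3} as a specialization of Theorem \ref{main_result2} to equal weights, followed by averaging over the random choice of subsets. First I would substitute $w_i = 1/m$ into $w^\top H_{\mathrm{mis}} w$ and split the quadratic form into its diagonal and off-diagonal parts: since there are $m$ diagonal and $m(m-1)$ off-diagonal entries, $w^\top H_{\mathrm{mis}} w = \tfrac{1}{m}\big(\tfrac{1}{m}\sum_i h_{\mathrm{mis},ii}\big) + \tfrac{m-1}{m}\big(\tfrac{1}{m(m-1)}\sum_{i\neq j} h_{\mathrm{mis},ij}\big)$. Because the subset construction in Assumption \ref{assumptions random subset} is exchangeable across models, the diagonal entries are identically distributed and the off-diagonal entries are identically distributed, so after taking $\mathbbm{E}_{S,T}$ the two averages reduce to $\mathbbm{E}_{S,T}[h_{\mathrm{mis},11}]$ and $\mathbbm{E}_{S,T}[h_{\mathrm{mis},12}]$. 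The goal is then to show these limits equal $D$ and $N$, which gives the claimed form $\tfrac{1}{m}D + \tfrac{m-1}{m}N$.

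Next I would identify the limiting values of every structural parameter entering $H_{\mathrm{mis}}$. Since $|S_i|/n = (|S_i|/p)(p/n) \to \alpha\gamma = \gamma\alpha$ we have $\gamma_i \to \gamma\alpha$, while $\eta_i = |T_i|/n \to \eta$; by the preceding Lemma $|S_i\cap S_j|/p \to \alpha^2$ so $\gamma_{ij} \to \gamma\alpha^2$, and $|T_i\cap T_j|/n \to \eta^2$ so $\eta_{ij} \to \eta^2$. For the signal fractions, $\Pr(k\in S_i)=\alpha$ and $\Pr(k\in S_i\cap S_j)=\alpha^2$ give $\mathbbm{E}[\kappa_i]=\alpha$ and $\mathbbm{E}[\kappa_{ij}]=\alpha^2$; crucially, $\kappa_i,\kappa_j,\kappa_{ij}$ enter every entry of $H_{\mathrm{mis}}$ \emph{linearly}, so for the expectation I only need their means, not their concentration. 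A key structural remark is that $\gamma_i$ and $\eta_i$ share the common limits $\gamma\alpha$ and $\eta$ across all $i$, so the limiting regime is uniform: either $\gamma\alpha<\eta$ (all models underparametrized) or $\gamma\alpha>\eta$ (all overparametrized). Consequently only the first and fourth branches of $h_{\mathrm{mis},ij}$ are ever active, and the mixed branches (second and third) play no role in the limit.

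Substituting these values then yields $D$ and $N$ by routine algebra. On the diagonal the coefficients of $\kappa_i$ are deterministic (they involve only the fixed sizes $\gamma_i,\eta_i$), so $\mathbbm{E}[\kappa_i]=\alpha$ can be inserted directly, giving $D$ in each regime. Off the diagonal, I would apply the Lemma to replace $\gamma_{ij},\eta_{ij}$ by $\gamma\alpha^2,\eta^2$ and insert the means of the $\kappa$'s. In the underparametrized case the signal factor collapses via $1-\alpha-\alpha+\alpha^2=(1-\alpha)^2$, producing $N=\tfrac{(1-\alpha)^2}{1-\gamma\alpha^2}r^2 + \tfrac{\gamma\alpha^2}{1-\gamma\alpha^2}\sigma^2$. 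In the overparametrized case the four $r^2$-terms combine through the identity $\gamma(1-\alpha)^2 + 2(\gamma\alpha-\eta)(1-\alpha) + (\gamma\alpha-\eta)^2/\gamma = (\gamma-\eta)^2/\gamma$, which is just the perfect square $(\gamma u + v)^2/\gamma$ with $u=1-\alpha$ and $v=\gamma\alpha-\eta$ (note $\gamma u + v = \gamma-\eta$); together with the noise coefficient $\eta^2/(\gamma-\eta^2)$ this gives $N=\tfrac{(\gamma-\eta)^2}{\gamma(\gamma-\eta^2)}r^2 + \tfrac{\eta^2}{\gamma-\eta^2}\sigma^2$, as stated.

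The hard part will be justifying the interchange of the $n\to\infty$ limit with the expectation over $S$ and $T$. Theorem \ref{main_result2} provides almost-sure convergence (over $X$) only for \emph{deterministic} subset sequences whose ratios converge, whereas here $\gamma_{ij},\eta_{ij},\kappa_i,\kappa_{ij}$ are themselves random and fluctuate with $n$. To bridge this I would condition on $(S,T)$ and apply the proof of Theorem \ref{main_result2} to obtain, for each realization, convergence of $\mathbbm{E}_X[\,\cdot\mid S,T]$ to $h_{\mathrm{mis},ij}$ evaluated at the finite-$n$ ratios, viewed as a continuous function of those ratios; then take $\mathbbm{E}_{S,T}$ and pass the limit inside. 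This last step relies on the ratios lying in $[0,1]$ and, away from the singular locus $\gamma\alpha=\eta$, the relevant maps being Lipschitz, so that the Lemma (with its $O(1/p)$ variance bound) applies, while the linear dependence on the $\kappa$'s lets their randomness be absorbed into the means. Establishing the uniform integrability needed for $\lim_n \mathbbm{E}_{S,T} = \mathbbm{E}_{S,T}\lim_n$, and ensuring the $X$-limit is uniform enough over subset realizations, is the genuine technical obstacle; the algebraic simplifications above are then mechanical.
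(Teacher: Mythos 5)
Your proposal is correct and follows essentially the same route as the paper, which derives Theorem \ref{main_result3} in one line by combining Theorem \ref{main_result2} (specialized to $w_i=1/m$, with $\gamma_i\to\gamma\alpha$, $\eta_i\to\eta$, $\gamma_{ij}\to\gamma\alpha^2$, $\eta_{ij}\to\eta^2$, $\mathbbm{E}[\kappa_i]=\alpha$, $\mathbbm{E}[\kappa_{ij}]=\alpha^2$) with the preceding Lemma on random subsets; your algebraic verification of $D$ and $N$, including the perfect-square collapse in the overparametrized case, matches the stated formulas. If anything, your discussion of interchanging the $n\to\infty$ limit with $\mathbbm{E}_{S,T}$ is more explicit than the paper's own one-sentence justification.
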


Theorem \ref{main_result3} is a generalization of Theorem 3.5 in \cite{lejeune2020implicit}. Indeed, when $\alpha\gamma < \eta$, Theorem \ref{main_result3} coincides with the results in \cite{lejeune2020implicit}. From this theorem, we can determine the optimal feature and sample subset sizes by minimizing (\ref{ens_risk_main_result3}). When considering the minimization of \ref{ens_risk_main_result3}, some equivalent risk-achieving subset sizes may emerge, in which case it is preferable to choose the smallest possible size to minimize computational complexity. Because $N\le D$, the risk decreases as the number of models $m$ increases. This result was also obtained in \cite{patil2022bagging}. Observing $N$, we find an interesting property in that $N$ does not depend on $\eta$ when $\eta$ is larger than $\gamma\alpha$; likewise, $N$ does not depend on $\alpha$ when $\gamma\alpha$ exceeds $\eta$.

\section{Numerical experiment}
In this section, we numerically examine the high-dimensional behavior of out-of-sample risk for model averaging estimators. First, we verify the theoretical results for the behavior of the cross terms (off-diagonal component $h_{12}$ in Theorem \ref{main_result1}). For the diagonal component, this is just like the behavior of the min-norm least-squares estimator, where the risk increases until the sample-to-dimension ratio exceeds 1, and then decreases as shown in \cite{hastie2022surprises}. In this numerical experiment, the number of samples $n$ was $100$ and $\eta_1 = \eta_2 = 1$, in which case all samples are used for estimation for the sake of simplicity. Moreover, we assume $X_1$ and $X_2$ to be the features of the candidate models, and $X_1$ and $X_2$ to be $n\times p_1$ and $n\times p_2$ matrices, respectively, where $X_1$ and $X_2$ have i.i.d. $N(0,1)$ entries. Furthermore, $X_1$ and $X_2$ share $p_{12}$ column vectors, representing the matrix constructed from the set of $p_{12}$ column vectors by $X_{12}$. Here, we assume that 
\begin{align*}
    y = X_{12} \beta + \epsilon,
\end{align*}
is the true model, where $\beta = (r,...,r)^\top/\sqrt{p_{12}}$, $\epsilon \sim N(0,\mathrm{I}_n)$. 
Recall that $p_1/n =\gamma_1, p_2/n = \gamma_2$, and $p_{12}/n = \gamma_{12}$. $r^2 = \|\beta\|^2$ varies from $0.5$ to $4.5$, and $\sigma^2 =1$. The following four figures represent the asymptotic risk curves in (\ref{nondiag_main_result1}) for various scenarios. Figures \ref{fig:a} and \ref{fig:b} show variation in $\gamma_2$, whereas Figures \ref{fig:c} and \ref{fig:d} reflect variation in $\gamma_{12}$. In Figures \ref{fig:a} and \ref{fig:b}, the risk improves as the dimension of each candidate model exceeds that of the sample. Conversely, in Figures \ref{fig:c} and \ref{fig:d}, the risk increases with the overlap of each candidate model. Therefore, any increase in dimensionality of a candidate model must be carefully considered.

\begin{figure}[H]
        \begin{minipage}[b]{0.4\linewidth}
            \centering
            \includegraphics[width=\textwidth]{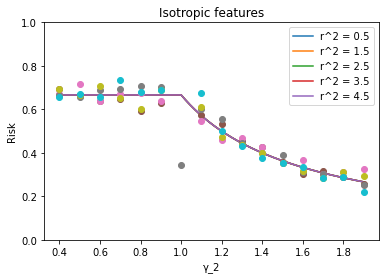}
            \caption{Asymptotic risk curves in (\ref{nondiag_main_result1}) for $h_{12}$, where $r^2$ varies from $0.5$ to $4.5$ and $\sigma^2= 1$.
            For each value of $r^2$, the points denote finite-sample risks, with $n = 100$, $p_2 = [\gamma_2 n]$ across various values of $\gamma_2$, computed from features $X_1$ and $X_2$, which are $100\times 70$ and $100\times p_2$ matrices, respectively, and have i.i.d. $N(0,1)$ entries with $p_{12} = 40$ column vectors in common.}
            \label{fig:a}
        \end{minipage}
        \hspace{0.5cm}
        \begin{minipage}[b]{0.4\linewidth}
            \centering
            \includegraphics[width=\textwidth]{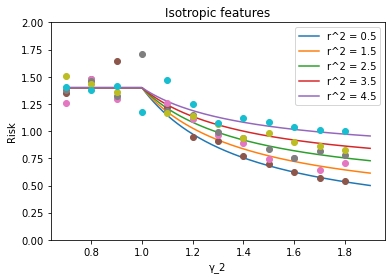}
            \caption{ Asymptotic risk curves in (\ref{nondiag_main_result1}) for $h_{12}$, where $r^2$ varies from $0.5$ to $4.5$ and $\sigma^2= 1$.
            For each value of $r^2$, the points denote finite-sample risks, with $n = 100$, $p_2 = [\gamma_2 n]$, across various values of $\gamma_2$, computed from features $X_1$ and $X_2$, which are $100\times 120$ and $100\times p_2$ matrices, respectively, and have i.i.d. $N(0,1)$ entries with $p_{12} = 70$ column vectors in common.}
            \label{fig:b}
        \end{minipage}
\end{figure}
    
\begin{figure}[H]
        \begin{minipage}[b]{0.4\linewidth}
            \centering
            \includegraphics[width=\textwidth]{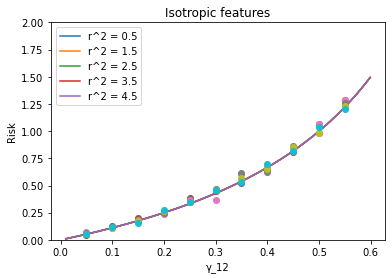}
            \caption{Asymptotic risk curves in (\ref{nondiag_main_result1}) for $h_{12}$, where $r^2$ varies from $0.5$ to $4.5$ and $\sigma^2= 1$.
            For each value of $r^2$, the points denote finite-sample risks, with $n = 100$, $p_{12} = [\gamma_{12} n]$, across various values of $\gamma_{12}$, computed from features $X_1$ and $X_2$, which are $100\times 60$ and $100\times 80$ matrices, respectively and have i.i.d. $N(0,1)$ entries with $p_{12}$ column vectors in common.}
            \label{fig:c}
        \end{minipage}
        \hspace{0.5cm}
        \begin{minipage}[b]{0.4\linewidth}
            \centering
            \includegraphics[width=\textwidth]{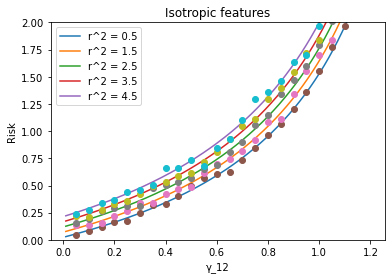}
            \caption{Asymptotic risk curves in (\ref{nondiag_main_result1}) for $h_{12}$, where $r^2$ varies from $0.5$ to $4.5$ and $\sigma^2= 1$.
            For each value of $r^2$, the points denote finite-sample risks, with $n = 100$, $p_{12} = [\gamma_{12} n]$, across various values of $\gamma_{12}$, computed from features $X_1$ and $X_2$, which are $100\times 120$, $100\times 140$ matrices, respectively, and have i.i.d. $N(0,1)$ entries with $p_{12}$ column vectors in common.}
            \label{fig:d}
        \end{minipage}
\end{figure}

Next, we conduct a numerical study of the cross terms ($h_{\mathrm{mis},12}$ of Theorem \ref{main_result2}) with a sample size $n$ of $500$. As in the previous numerical studies, we assume $X_1$ and $X_2$ to be $n\times p_1$ and $n\times p_2$ matrices, respectively, representing the features of candidate models. Both of these matrices have i.i.d. $N(0,1)$ entries and share $p_{12}$ column vectors. We denote the matrix constructed from the set of $p_{12}$ column vectors by $X_{12}$. Here, we assume that 
\begin{align*}
    y = X \beta + \epsilon,
\end{align*}
where $\beta = (0,..., 0,1, ...,1,0,...,0)^\top/\sqrt{p_0}$ (the number of $1$ is $p_0$), $\epsilon \sim N(0,\mathrm{I}_n)$ is the true model, and $X$ is an $n\times p$ matrix with i.i.d. $N(0,1)$ entries. Thus, both $X_1$ and $X_2$ are submatrices of $X$. For this numerical experiment, we assumed that $p=1000$ and the support of $\beta$ consist of $p_0$ $1$'s, with $p_{02}$ included in the part of Model 2 other than the part in common with Model 1, and $p_{01}$ included in the part Model 1 other than the part in common with Model 2. In addition, we assume that the common parts of Models 1 and 2 are included in the support. Figure \ref{fig:g} presents asymptotic risk curves in (\ref{ens_risk_main_result3}) for the cross terms ($h_{\mathrm{mis},12}$ in Theorem \ref{main_result2}) when $\sigma^2= 1$. The points denote finite-sample risks with $n = 500$ across various values of $\gamma_2$ computed from feature $X$. On the other hand, Figure \ref{fig:h} shows the case of $\gamma_{12}=1.2$ in the settings of Figure \ref{fig:g}. As observed from these figures, the risk exhibits a decrease followed by a gradual increase with the growth of Model 2. In the misspecified scenario, the double descent phenomeno also occurs.
\begin{figure}[H]
        \begin{minipage}[b]{0.4\linewidth}
            \centering
            \includegraphics[width=\textwidth]{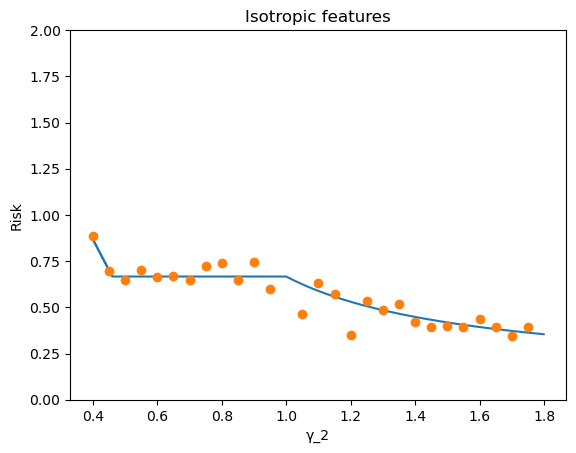}
            \caption{Asymptotic risk curves in (\ref{nondiag_main_result2}) for $h_{\mathrm{mis}, 12}$, when $p_0=250$, $p_{01} =20$, $p_{02} = 30$, $p_1=300$, $p_{12}=200$, $r^2=1$ and $\sigma^2= 1$.
            The points denote finite-sample risks averaged over $3000$ experiments, with $n = 500$, $p_{2} = [\gamma_{2} n]$, across various values of $\gamma_{2}$, computed from features $X_1$ and $X_2$ that are $500\times 300$, $500\times p_2$ matrix and have i.i.d. $N(0,1)$ entries with $200$ column vectors in common.}
            \label{fig:g}
        \end{minipage}
        \hspace{0.5cm}
        \begin{minipage}[b]{0.4\linewidth}
            \centering
            \includegraphics[width=\textwidth]{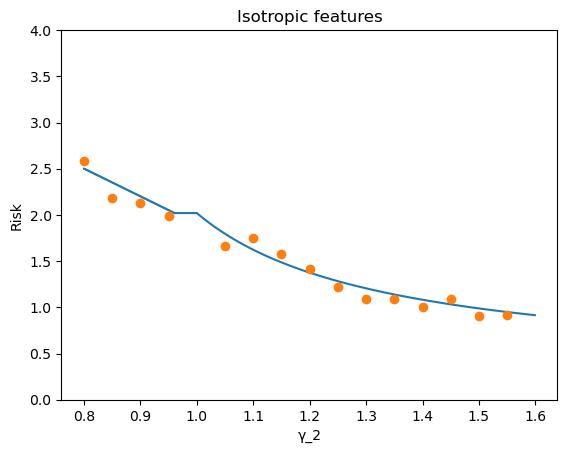}
            \caption{Asymptotic risk curves in (\ref{nondiag_main_result2}) for $h_{\mathrm{mis}, 12}$, when $p_0=500$, $p_{01} =20$, $p_{02} = 80$, $p_1=600$, $p_{12}=400$, $r^2=1$ and $\sigma^2= 1$.
            The points denote finite-sample risks averaged over $3000$ experiments, with $n = 500$, $p_{2} = [\gamma_{2} n]$, across various values of $\gamma_{2}$, computed from features $X_1$ and $X_2$ that are $500\times 600$, $500\times p_2$ matrix and have i.i.d. $N(0,1)$ entries with $400$ column vectors in common.}
            \label{fig:h}
        \end{minipage}
\end{figure}

Finally, we conducted a numerical study on the linear ensemble estimator $\hat\beta_{\mathrm{ens}}$ with a sample size $n$ of $200$. Figure \ref{fig:e} presents asymptotic risk curves in (\ref{ens_risk_main_result3}) for the linear ensemble estimator when $m$ varies from $2$ to $30$, $\eta=1$, $r^2=3.5$, and $\sigma^2= 1$. For each value of $m$, the points denote finite-sample risks, with $n = 200$, across various values of $\alpha$, computed from feature $X$, which is a $200\times 400$ matrix with i.i.d. $N(0,1)$ entries. Figure \ref{fig:f} represents the case with $\eta=0.6$ in the settings shown in Figure \ref{fig:e}. In Figure \ref{fig:e}, when $m$ is sufficiently large, the model averaging estimator obtained via random feature extraction has the same risk as the min-norm least-squares estimator when each model’s dimensionality exceeds the sample size; thus, the risk is not improved. Conversely, when the dimensionality of each model is smaller than the sample size, an improvement is observed in the risk. In addition, Figure \ref{fig:f} shows that when a sample is randomly extracted along with the features, there is no improvement, even if the dimensionality of each model is reduced.

\begin{figure}[H]
        \begin{minipage}[b]{0.4\linewidth}
            \centering
            \includegraphics[width=\textwidth]{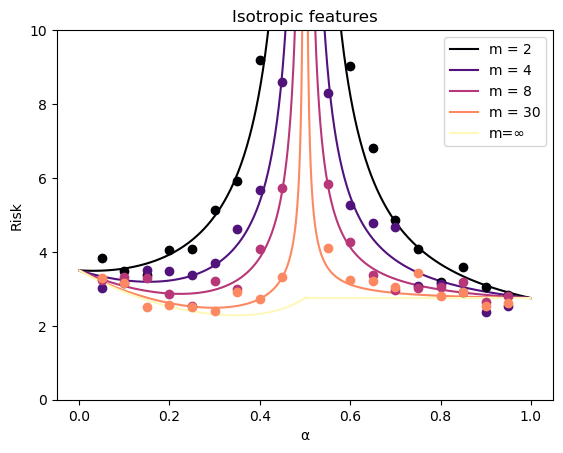}
            \caption{Asymptotic risk curves in (\ref{ens_risk_main_result3}) for linear ensemble estimator, when $m$ varies from $2$ to $30$, $\eta=1$, $r^2=3.5$ and $\sigma^2= 1$.
            For each value of $m$, the points denote finite-sample risks averaged over $300$ experiments, with $n = 200$, across various values of $\alpha$, computed from features $X$ that is $200\times 400$ matrix and have i.i.d. $N(0,1)$ entries.}
            \label{fig:e}
        \end{minipage}
        \hspace{0.5cm}
        \begin{minipage}[b]{0.4\linewidth}
            \centering
            \includegraphics[width=\textwidth]{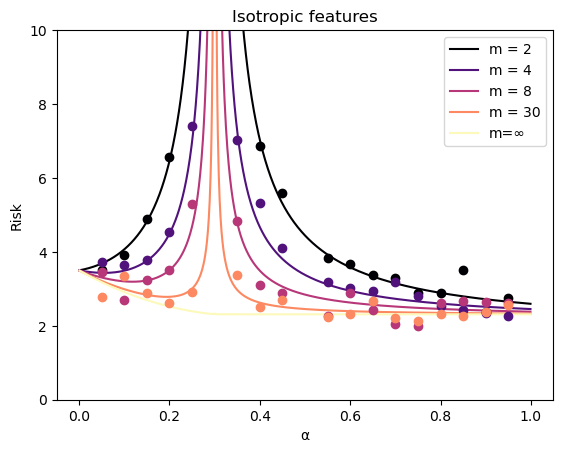}
            \caption{Asymptotic risk curves in (\ref{ens_risk_main_result3}) for linear ensemble estimator, when $m$ varies from $2$ to $30$, $\eta=0.6$, $r^2=3.5$ and $\sigma^2= 1$.
            For each value of $m$, the points denote finite-sample risks averaged over $300$ experiments, with $n = 200$, across various values of $\alpha$, computed from features $X$ that is $200\times 400$ matrix and have i.i.d. $N(0,1)$ entries.}
            \label{fig:f}
        \end{minipage}
\end{figure}

\section{Conclusion}

In this study, we derived higher-dimensional asymptotic limits for model averaging estimator, as well as a method for determining the optimal weights. As a result, we demonstrated that the double descent phenomenon occurs in model averaging estimators under certain conditions. Although many previous studies on model averaging methods achieved optimal weighting by minimizing certain criteria, we developed a method to calculate optimal weights directly by computing the asymptotic values of the predicted risk. We also note that the risk considered in this study was slightly different from those reported in previous studies (e.g., \cite{hansen2007least, ando2014model}). Whereas those studies focused on minimizing the estimated in-sample risk, we developed a framework with the objective of minimizing the out-of-sample estimated risk.

Subsequent studies will include a derivation of better estimators of the signal-to-noise ratios $\mathrm{SNR}_i$ and $\mathrm{SNR}_{ij}$ for each candidate model in Corollary \ref{corollary2}; although we derived a consistent estimator, its convergence rate were very slow numerically. In addition, it is important to consider the case where $\Sigma\neq I$. To this end, we believe that the results of the general sample covariance matrix deduced in \cite{yin2018no,yin2022some} can be extended further.

\section{Acknowledgement}

This work was supported by JSPS KAKENHI Grant Number 22H00510,
and AMED Grant Numbers JP23dm0207001 and JP23dm0307009.

\bibliographystyle{apalike}
\bibliography{reference}

\begin{thebibliography}{}

\bibitem[Advani et~al., 2020]{advani2020high}
Advani, M.~S., Saxe, A.~M., and Sompolinsky, H. (2020).
\newblock High-dimensional dynamics of generalization error in neural networks.
\newblock {\em Neural Networks}, 132:428--446.

\bibitem[Akaike, 1974]{akaike1974new}
Akaike, H. (1974).
\newblock A new look at the statistical model identification.
\newblock {\em IEEE transactions on automatic control}, 19(6):716--723.

\bibitem[Akaike, 1978]{akaike1978likelihood}
Akaike, H. (1978).
\newblock On the likelihood of a time series model.
\newblock {\em Journal of the Royal Statistical Society: Series D (The
  Statistician)}, 27(3-4):217--235.

\bibitem[Akaike, 1979]{akaike1979bayesian}
Akaike, H. (1979).
\newblock A bayesian extension of the minimum {AIC} procedure of autoregressive
  model fitting.
\newblock {\em Biometrika}, 66(2):237--242.

\bibitem[Ando and Li, 2014]{ando2014model}
Ando, T. and Li, K.-C. (2014).
\newblock A model-averaging approach for high-dimensional regression.
\newblock {\em Journal of the American Statistical Association},
  109(505):254--265.

\bibitem[Bai and Silverstein, 2010]{bai2010spectral}
Bai, Z. and Silverstein, J.~W. (2010).
\newblock {\em Spectral analysis of large dimensional random matrices},
  volume~20.
\newblock Springer.

\bibitem[Bai and Silverstein, 1998]{bai1998no}
Bai, Z.-D. and Silverstein, J.~W. (1998).
\newblock No eigenvalues outside the support of the limiting spectral
  distribution of large-dimensional sample covariance matrices.
\newblock {\em The Annals of Probability}, 26(1):316--345.

\bibitem[Bartlett et~al., 2020]{bartlett2020benign}
Bartlett, P.~L., Long, P.~M., Lugosi, G., and Tsigler, A. (2020).
\newblock Benign overfitting in linear regression.
\newblock {\em Proceedings of the National Academy of Sciences},
  117(48):30063--30070.

\bibitem[Belkin et~al., 2019]{belkin2018reconciling}
Belkin, M., Hsu, D., Ma, S., and Mandal, S. (2019).
\newblock Reconciling modern machine-learning practice and the classical
  bias–variance trade-off.
\newblock {\em Proceedings of the National Academy of Sciences},
  116(32):15849--15854.

\bibitem[Derezinski et~al., 2020]{derezinski2020exact}
Derezinski, M., Liang, F.~T., and Mahoney, M.~W. (2020).
\newblock Exact expressions for double descent and implicit regularization via
  surrogate random design.
\newblock {\em Advances in neural information processing systems},
  33:5152--5164.

\bibitem[Dicker, 2014]{dicker2014variance}
Dicker, L.~H. (2014).
\newblock Variance estimation in high-dimensional linear models.
\newblock {\em Biometrika}, 101(2):269--284.

\bibitem[Dobriban and Sheng, 2020]{dobriban2020wonder}
Dobriban, E. and Sheng, Y. (2020).
\newblock {WONDER}: weighted one-shot distributed ridge regression in high
  dimensions.
\newblock {\em The Journal of Machine Learning Research}, 21(1):2483--2534.

\bibitem[Dobriban and Sheng, 2021]{dobriban2021distributed}
Dobriban, E. and Sheng, Y. (2021).
\newblock Distributed linear regression by averaging.
\newblock {\em The Annals of Statistics}, 49(2):918--943.

\bibitem[Dobriban and Wager, 2018]{dobriban2018high}
Dobriban, E. and Wager, S. (2018).
\newblock High-dimensional asymptotics of prediction: Ridge regression and
  classification.
\newblock {\em The Annals of Statistics}, 46(1):247--279.

\bibitem[Fujikoshi, 2022]{fujikoshi2022high}
Fujikoshi, Y. (2022).
\newblock High-dimensional consistencies of {KOO} methods in multivariate
  regression model and discriminant analysis.
\newblock {\em Journal of Multivariate Analysis}, 188:104860.

\bibitem[Hall et~al., 2005]{hall2005geometric}
Hall, P., Marron, J.~S., and Neeman, A. (2005).
\newblock Geometric representation of high dimension, low sample size data.
\newblock {\em Journal of the Royal Statistical Society: Series B (Statistical
  Methodology)}, 67(3):427--444.

\bibitem[Hansen, 2007]{hansen2007least}
Hansen, B.~E. (2007).
\newblock Least squares model averaging.
\newblock {\em Econometrica}, 75(4):1175--1189.

\bibitem[Hansen and Racine, 2012]{hansen2012jackknife}
Hansen, B.~E. and Racine, J.~S. (2012).
\newblock Jackknife model averaging.
\newblock {\em Journal of Econometrics}, 167(1):38--46.

\bibitem[Hastie et~al., 2022]{hastie2022surprises}
Hastie, T., Montanari, A., Rosset, S., and Tibshirani, R.~J. (2022).
\newblock Surprises in high-dimensional ridgeless least squares interpolation.
\newblock {\em The Annals of Statistics}, 50(2):949--986.

\bibitem[Hu and Li, 2022]{hu2022misspecification}
Hu, X. and Li, X. (2022).
\newblock Misspecification analysis of high-dimensional random effects models
  for estimation of signal-to-noise ratios.
\newblock {\em arXiv preprint arXiv:2202.06400}.

\bibitem[Janson et~al., 2017]{janson2017eigenprism}
Janson, L., Barber, R.~F., and Candes, E. (2017).
\newblock Eigenprism: inference for high dimensional signal-to-noise ratios.
\newblock {\em Journal of the Royal Statistical Society. Series B, Statistical
  methodology}, 79(4):1037.

\bibitem[Jiang et~al., 2016]{jiang2016high}
Jiang, J., Li, C., Paul, D., Yang, C., and Zhao, H. (2016).
\newblock On high-dimensional misspecified mixed model analysis in genome-wide
  association study.
\newblock {\em The Annals of Statistics}, 44(5):2127--2160.

\bibitem[Jiang and Nguyen, 2007]{jiang2007linear}
Jiang, J. and Nguyen, T. (2007).
\newblock {\em Linear and generalized linear mixed models and their
  applications}, volume~1.
\newblock Springer.

\bibitem[Ledoit and Wolf, 2012]{ledoit2012nonlinear}
Ledoit, O. and Wolf, M. (2012).
\newblock Nonlinear shrinkage estimation of large-dimensional covariance
  matrices.
\newblock {\em The Annals of Statistics}, 40(2):1024--1060.

\bibitem[Ledoit and Wolf, 2020]{ledoit2020analytical}
Ledoit, O. and Wolf, M. (2020).
\newblock Analytical nonlinear shrinkage of large-dimensional covariance
  matrices.
\newblock {\em The Annals of Statistics}, 48(5):3043--3065.

\bibitem[LeJeune et~al., 2020]{lejeune2020implicit}
LeJeune, D., Javadi, H., and Baraniuk, R. (2020).
\newblock The implicit regularization of ordinary least squares ensembles.
\newblock In {\em International Conference on Artificial Intelligence and
  Statistics}, pages 3525--3535. PMLR.

\bibitem[Mallows, 2000]{mallows2000some}
Mallows, C.~L. (2000).
\newblock Some comments on {C}p.
\newblock {\em Technometrics}, 42(1):87--94.

\bibitem[Patil et~al., 2022]{patil2022bagging}
Patil, P., Du, J.-H., and Kuchibhotla, A.~K. (2022).
\newblock Bagging in overparameterized learning: Risk characterization and risk
  monotonization.
\newblock {\em arXiv preprint arXiv:2210.11445}.

\bibitem[Richards et~al., 2021]{richards2021asymptotics}
Richards, D., Mourtada, J., and Rosasco, L. (2021).
\newblock Asymptotics of ridge (less) regression under general source
  condition.
\newblock In {\em International Conference on Artificial Intelligence and
  Statistics}, pages 3889--3897. PMLR.

\bibitem[Rubio and Mestre, 2011]{rubio2011spectral}
Rubio, F. and Mestre, X. (2011).
\newblock Spectral convergence for a general class of random matrices.
\newblock {\em Statistics \& probability letters}, 81(5):592--602.

\bibitem[Schwarz, 1978]{schwarz1978estimating}
Schwarz, G. (1978).
\newblock Estimating the dimension of a model.
\newblock {\em The Annals of Statistics}, pages 461--464.

\bibitem[Wu and Xu, 2020]{wu2020optimal}
Wu, D. and Xu, J. (2020).
\newblock On the optimal weighted $l_2 $ regularization in overparameterized
  linear regression.
\newblock {\em Advances in Neural Information Processing Systems},
  33:10112--10123.

\bibitem[Yin, 2018]{yin2018no}
Yin, Y. (2018).
\newblock No eigenvalues outside the limiting support of the spectral
  distribution of general sample covariance matrices.
\newblock {\em arXiv preprint arXiv:1801.03319}.

\bibitem[Yin, 2022]{yin2022some}
Yin, Y. (2022).
\newblock Some strong convergence theorems for eigenvalues of general sample
  covariance matrices.
\newblock {\em Random Matrices: Theory and Applications}, 11(03):2250029.

\end{thebibliography}

\renewcommand{\thesection}{\Alph{section}}

\setcounter{section}{0}
\section{Appendix}

We now prove Theorem \ref{main_result1}.

\subsection{Proof of Theorem \ref{main_result1}}
\label{proof_of_main_result1}
\subsubsection{Proof sketch for Theorem \ref{main_result1}}
We first briefly describe the proof of Theorem \ref{main_result1}. 
\begin{proof}[proof sketch for Theorem \ref{main_result1}]
    Let $|T_i| = n_i$, $|S_i|=p_i$, $|T_{i}\cap T_{j}| = n_{ij}$ and $|S_i\cap S_j| = p_{ij}$. First, we consider the ridge regression estimator $\hat\beta_{w_n,\lambda}$ as follows:
        \begin{align*}
            \hat\beta_{w_n,\lambda,k} = \sum_{i=1}^m w_{n,i} \mathbbm{1}_{\{i\in S_i\}}\hat\beta^{T_i}_{S_i,\lambda, k}, 
        \end{align*}
        where $\hat\beta_{w,\lambda,k}$ is the $k$-th component of $\hat\beta_{w_n,\lambda}$ and $\hat\beta^{T_i}_{S_i,\lambda} = \frac{1}{n_i}(\frac{1}{n_i}X^{T_i\top}_{S_i} X^{T_i}_{S_i} +\lambda \mathrm{I})^{-1}X^{T_i\top}_{S_i} y_{T_i}$, for $i=1,...,m$,  $\lambda>0$.
        Define 
        \begin{align*}
            B_D(\hat\beta_{w_n,\lambda}) &= \sum_{i=1}^m w_{n,i}^2 \lambda^2\beta_{S_i}^\top\left(\frac{1}{n_i}X^{T_i\top}_{S_i} X^{T_i}_{S_i}+\lambda I\right)^{-2}\beta_{S_i},\\  V_D(\hat\beta_{w_n,\lambda}) &= \sum_{i=1}^m w_{n,i}^2\frac{\sigma^2}{n_i^2} \mathrm{Tr}\left(X^{T_i}_{S_i}\left(\frac{1}{n_i}X^{T_i\top}_{S_i} X^{T_i}_{S_i}+ \lambda \mathrm{I}\right)^{-2}X^{T_i\top}_{S_i}\right)\\
            B_N(\hat\beta_{w_n,\lambda}) &=  \sum_{i,j, i\neq j} w_{n,i}w_{n,j} \lambda^2\beta_{S_i}^\top\left(\frac{1}{n_i}X^{T_i\top}_{S_i}  X^{T_i}_{S_i}+\lambda \mathrm{I}\right)^{-1}E_{i,j}\left(\frac{1}{n_j}X^{T_j\top}_{S_j}  X^{T_j}_{S_j}+\lambda \mathrm{I}\right)^{-1} \beta_{S_j},\\
            V_N(\hat\beta_{w,\lambda}) &= \sum_{i,j, i\neq j} w_{n,i}w_{n,j}\frac{\sigma^2}{n_in_j}  \mathrm{Tr}\left(X^{T_i\cap T_j}_{S_i}\left(\frac{1}{n}X^{T_i\top}_{S_i}  X^{T_i}_{S_i} +\lambda \mathrm{I}\right)^{-1}E_{i,j}\left(\frac{1}{n}X^{T_j\top}_{S_j}  X^{T_j}_{S_j}+\lambda \mathrm{I}\right)^{-1}X^{T_i\cap T_j \top}_{S_j} \right),
        \end{align*}
    where $E_{i,j} = \mathbb{E}\left[x_{0,S_i}x_{0,S_j}^\top\right]$, $B_D(\hat\beta_{w_n,\lambda})$ and $V_D(\hat\beta_{w_n,\lambda})$ correspond to ``diagonal component", representing a kind of covariance between the same linear models, and $B_N(\hat\beta_{w_n,\lambda})$ and $V_N(\hat\beta_{w_n,\lambda})$ correspond to ``off-diagonal component", representing a kind of covariance between different linear models.
    Then    
    \begin{align}
        R_X(\hat\beta_{w_n,\lambda}) = B_D(\hat\beta_{w_n,\lambda}) + V_D(\hat\beta_{w_n,\lambda}) + B_N(\hat\beta_{w_n,\lambda}) +V_N(\hat\beta_{w_n,\lambda}).
    \end{align}
    Observing the above terms, it is apparent that Lemma \ref{fundamental lemma} can be applied. Subsequently, Lemma \ref{important_lemma} can be used to complete the proof.
\end{proof}

\subsubsection{Detailed proof of Theorem \ref{main_result1}}

\begin{proof}[Proof of Theorem \ref{main_result1}]
    
     In the following, we focus on $B_D(\hat\beta_{w_n,\lambda}), V_D(\hat\beta_{w_n,\lambda}), B_N(\hat\beta_{w_n,\lambda})$ and $V_N(\hat\beta_{w_n,\lambda})$. For the notation convenience, let
    \begin{align}
        e_l(-\lambda)  = \frac{-1+\frac{p_l}{n_l} -\lambda + \sqrt{(1-\frac{p_l}{n_l}+\lambda)^2+4p_l \lambda/n_l}}{2p_l \lambda/n_l},
    \end{align}
    for $l = 1,2,...,m$. Thus, when $\frac{p_l}{n_l} < 1$, 
    \begin{align}
        \lim_{\lambda\to 0} e_l(-\lambda) = \frac{1}{1-\frac{p_l}{n_l}},
    \end{align}
    on the other hand, when $\frac{p_l}{n_l} > 1$,
    \begin{align}
        \lim_{\lambda\to 0} \lambda e_l(-\lambda) = 1-\frac{n_l}{p_l}.
    \end{align}
    
    Note that from Theorem 1 and its corollary of \cite{bai1998no} (see also Chapter 6 of \cite{bai2010spectral}), the largest and smallest nonzero eigenvalues of $X^{T_i\top}_{S_i}  X^{T_i}_{S_i}$ have upper and lower constant bounds $M$ and $1/M$ independent of $i$ and $p$, respectively, for all large $n$. Hence, because $\sum_{i=1}^n w_i = 1$, $B_D(\hat\beta_{w_n,\lambda}), V_D(\hat\beta_{w_n,\lambda}), B_N(\hat\beta_{w_n,\lambda})$ and $V_N(\hat\beta_{w_n,\lambda})$ are all upper-bounded by a constant for all large $n$ and $\lambda>0$. Furthermore, we can also bound the derivative of $B_D(\hat\beta_{w_n,\lambda}), V_D(\hat\beta_{w_n,\lambda}), B_N(\hat\beta_{w_n,\lambda})$ and $V_N(\hat\beta_{w_n,\lambda})$ from above.

    We first consider the case wherein $\gamma_i < \eta_i, \gamma_j < \eta_j$ for any $i,j$. From Theorem 1 and its Corollary in \cite{bai1998no} (see also chapter 6 of \cite{bai2010spectral}),  all eigenvalues of $X^{T_i\top}_{S_i}  X^{T_i}_{S_i}$ are bounded away from 0 for all large $n$ almost surely, $B_D(\hat\beta_{w_n,\lambda})\to 0$, $B_N(\hat\beta_{w_n,\lambda}) \to 0$ as $\lambda\to 0$. Therefore, we only need to focus on the variance terms. From (9) in Theorem 1 of 
    \cite{hastie2022surprises}, $V_D(\hat\beta_{w_n,\lambda}) \to \sum_{i=1}^m w_{i}^2 \frac{\gamma_i}{1-\gamma_i}$ as $n\to \infty, \lambda \to 0$. For the term $V_N(\hat\beta_{w_n,\lambda})$, Lemmas \ref{important_lemma} and \ref{fundamental lemma} help us obtain its asymptotic behavior. If we replace $Z_1$ with $X_{S_i}^{T_i}$ and $Z_2$ with $X^{T_j}_{S_j}$ in Lemma \ref{fundamental lemma}, we obtain
    \begin{align}
    V_N(\hat\beta_{w_n,\lambda}) &= \sum_{i,j, i\neq j} w_{n,i}w_{n,j}\sigma^2  \frac{1}{n_in_j}\mathrm{Tr}\left\{X_{S_i}^{T_i\cap T_j}\left(\frac{1}{n_i}X_{S_i}^{T_i\top}  X_{S_i}^{T_i} + \lambda \mathrm{I}\right)^{-1}E_{i,j}\left(\frac{1}{n_j}X_{S_j}^{T_j\top}  X_{S_j}^{T_j}+\lambda \mathrm{I}\right)^{-1}X_{S_j}^{T_i\cap T_j \top} \right\}\nonumber\\
    &=\frac{1}{n_{ij}}\sum_{i,j, i\neq j} w_{n,i}w_{n,j}\sigma^2\frac{\frac{n_{ij}}{n_in_j}\mathrm{Tr} \left(\left(\frac{1}{n_i}X_{S_i}^{T_i\top}  X_{S_i}^{T_i} +\lambda \mathrm{I}\right)^{-1}E_{i,j}\left(\frac{1}{n_j}X_{S_j}^{T_j\top}  X_{S_j}^{T_j}+\lambda \mathrm{I}\right)^{-1} E_{i,j}^\top\right)}{\left(1+\frac{1}{n_i}\mathrm{Tr}\left(\left(\frac{1}{n}X_{S_i}^{T_i\top}  X_{S_i}^{T_i}+\lambda \mathrm{I}\right)^{-1}\right)\right)\left(1+\frac{1}{n_j}\mathrm{Tr}\left(\left(\frac{1}{n_j}X_{S_j}^{T_j\top}  X_{S_j}^{T_j}+\lambda \mathrm{I}\right)^{-1}\right)\right)} \nonumber\\ 
    & \quad + o_{a.s.}(1). \label{eqn:4.1}
\end{align}
Hence, by applying Lemma \ref{important_lemma} to (\ref{eqn:4.1}), we obtain 
\begin{align}
    V_N(\hat\beta_{w_n,\lambda}) = \sum_{i,j, i\neq j} w_{n,i}w_{n,j}\sigma^2 \frac{\frac{n_{ij}p_{ij}}{n_in_j}e_i(-\lambda)e_j(-\lambda)}{(1+\frac{p_i}{n_i}e_i(-\lambda))(1+\frac{p_j}{n_j}e_j(-\lambda))\left(1-\frac{\frac{n_{ij}p_{ij}}{n_in_j}e_{i}(-\lambda)e_{j}(-\lambda)}{(1+\frac{p_i}{n_i}e_i(-\lambda))(1+\frac{p_j}{n_j}e_j(-\lambda))}\right)} 
     \quad + o_{a.s.}(1). 
\end{align}
As mentioned previously, $B_D(\hat\beta_{w_n,\lambda}), V_D(\hat\beta_{w_n,\lambda}), B_N(\hat\beta_{w_n,\lambda})$ and $V_N(\hat\beta_{w_n,\lambda})$ along with their derivatives, are all bounded away from $\infty$ and are equicontinuous and uniformly bounded in terms of $\lambda \ge 0$. Therefore we can apply Arzel\`a-Ascoli theorem to obtain the claim by $\lambda\to0$ after $n,p\to \infty$. 

Next, we consider the case where $\gamma_i < \eta_i, \eta_j < \gamma_j$. Here, the asymptotic behavior of the variance term can be obtained in a manner similar to the case $\gamma_i < \eta_i, \gamma_j < \eta_j$. From Theorem 1 and its Corollary in \cite{bai1998no} (see also chapter 6 of \cite{bai2010spectral}), the eigenvalues of $X^{T_i\top}_{S_i}  X^{T_i}_{S_i}$ are bounded away from 0; Thus, $B_D\to0$ and $B_N\to0$ as $n,p\to\infty$ and $\lambda\to 0$. The cases in which $\gamma_j < \eta_j, \eta_i < \gamma_i$ are similar.

Finally, we consider the case where $\eta_i<\gamma_i, \eta_j<\gamma_j$. In this case, the same argument as in the previous two cases yields the asymptotic behavior of the variance term. As for the bias term $B_D(\hat\beta_{w_n,\lambda})$, we can obtain the claim using an argument similar to the proof of Theorem 1 in \cite{hastie2022surprises}. In terms of $B_N(\hat\beta_{w_n,\lambda})$, by substituting $E_{12}$ into $\Theta_1$ and $\beta_{S_j}\beta_{S_i}^\top$ into $\Theta_2$ in Lemma \ref{important_lemma}, we obtain
\begin{align}
    B_N(\hat{\beta}_{w_n,\lambda}) &=\sum_{i,j, i\neq j} w_{n,i}w_{n,j}\lambda^2\beta_{S_i}^\top\left(X^{T_i\top}_{S_i}  X^{T_i}_{S_i}+\lambda \mathrm{I}\right)^{-1}E_{i,j}\left(X^{T_j\top}_{S_j}  X^{T_j}_{S_j}+\lambda \mathrm{I}\right)^{-1} \beta_{S_j} \nonumber\\
    &= \sum_{i,j, i\neq j} w_{n,i}w_{n,j} \left(\lambda^2 e_ie_j\beta_{S_i}^\top E_{i,j} \beta_{S_j} + \lambda^2e_ie_j\frac{\beta_{S_i}^\top E_{i,j} \beta_{S_j}\frac{n_{ij}p_{ij}}{n_in_j}e_i(-\lambda)e_j(-\lambda)}{(1+\frac{p_i}{n_i}e_i)(1+\frac{p_j}{n_j}e_j)\left(1-\frac{\frac{n_{ij}p_{ij}}{n_in_j}e_{i}(-\lambda)e_{j}(-\lambda)}{(1+\frac{p_i}{n_i}e_i(-\lambda))(1+\frac{p_j}{n_j}e_j(-\lambda))}\right)}\right) + o_{a.s.}(1).
\end{align}
Therefore, by applying Arzel\`a-Ascoli theorem and $\lambda\to 0$ after $n,p\to\infty$, and performing some calculations, we obtain
\begin{align}
    B_N(\hat{\beta}_{w_n}) \to  \sum_{i,j, i\neq j} w_iw_j\frac{(\gamma_i -\eta_j)(\gamma_j-\eta_i)}{\gamma_{i}\gamma_{j}-\eta_{ij}\gamma_{ij}} r^2\kappa_{ij}.
\end{align}

This completes the proof.

\end{proof}

In the next, we prove Theorem \ref{main_result2}.

\subsection{proof of Theorem \ref{main_result2}}
\label{proof_of_main_result2}

\subsubsection{Proof sketch for Theorem \ref{main_result2}}
\begin{proof}[proof sketch for Theorem \ref{main_result2}]
    Because analogous proofs can be used for the diagonal and off-diagonal components of $H_{\mathrm{mis}}$, we hereafter focus specifically on the off-diagonal part.
    The off-diagonal part can be written as
    \begin{align*}
        &\mathbbm{E}\left[(\hat\beta^{T_i\top}_{S_i}  x_{0, S_i} - \beta^\top x_0)(\hat\beta^{T_j\top}_{S_j}  x_{0, S_j} - \beta^\top x_0)\right] \\
        &=\mathbbm{E}\left[(\hat\beta^{T_i\top}_{S_i}  x_{0, S_i} - \beta_{S_i}^\top x_{0, S_i})(\hat\beta^{T_j\top}_{S_j}  x_{0, S_j} - \beta_{S_j}^\top x_{0,S_j})\right] + \mathbbm{E}\left[(\beta_{S_i^c \cap S_j^c}^\top x_{0,S_i \cup S_j})^2\right]\\
        & \quad - \mathbbm{E}\left[(\hat\beta^{T_i\top}_{S_i}  x_{0, S_i} - \beta_{S_i}^\top x_{0,S_i}) \beta_{S_i \setminus S_j}^\top x_{0,S_i \setminus S_j}\right]
        - \mathbbm{E}\left[(\hat\beta^{T_j\top}_{S_j}  x_{0, S_j} - \beta_{S_j}^\top x_{0,S_j}) \beta_{S_j \setminus S_i}^\top x_{0, S_j \setminus S_i}\right]\\
        &= \Delta_1 + \Delta_2 + \Delta_3 + \Delta_4 + \Delta_5 + \Delta_6 + \Delta_7
    \end{align*}
    where 
    \begin{align}
        \Delta_1 = \mathbbm{E}\left[(\Tilde\beta^{T_i}_{S_i,S_j}- \beta_{S_i})^\top E_{i,j}(\Tilde\beta^{T_j}_{S_j,S_i} - \beta_{S_j})\right],
    \end{align}
    \begin{align}
        \Delta_2 = \mathbbm{E}\left[\beta_{S_j\setminus S_i}^\top X^{T_i\top}_{S_j\setminus S_i}  X^{T_i}_{S_i} (X^{T_i\top}_{S_i}  X^{T_i}_{S_i})^+ E_{i,j}(\Tilde\beta^{T_j}_{S_j, S_i} - \beta_{S_j})\right],
    \end{align}
    \begin{align}
        \Delta_3 = \mathbbm{E}\left[(\Tilde\beta^{T_i}_{S_i,  S_j}- \beta_{S_i})^\top E_{i,j}(X^{T_j\top}_{S_j}  X^{T_j}_{S_j})^+ X^{T_j\top}_{S_j}  X^{T_j}_{S_i\setminus S_j}\beta_{S_i\setminus S_j}\right],
    \end{align}
    \begin{align}
        \Delta_4 = \mathbbm{E}\left[\beta_{S_j\setminus S_i}^\top X^{T_i\top}_{S_j\setminus S_i}  X^{T_i}_{S_i} (X^{T_i\top}_{S_i}  X^{T_i}_{S_i})^+E_{i,j}(X^{T_j\top}_{S_j}  X^{T_j}_{S_j})^+ X^{T_j\top}_{S_j}  X^{T_j}_{S_i\setminus S_j}\beta_{S_i\setminus S_j}\right],
    \end{align}
    \begin{align}
        \Delta_5 = \mathbbm{E}\left[\left(\beta_{S_i^c \cap S_j^c}x_{0,S_i^c \cap S_j^c}\right)^2\right] = \|\beta_{S_i^c \cap S_j^c}\|^2,
    \end{align}
    \begin{align}
        \Delta_6 = \mathbbm{E}\left[(\beta_{S_j} - \hat\beta^{T_j}_{S_j})^\top F_{j,i} \beta_{S_j \setminus S_i}\right],
    \end{align}
    \begin{align}
         \Delta_7 = \mathbbm{E}\left[(\beta_{S_i} - \hat\beta^{T_i}_{S_i})^\top F_{i,j}\beta_{S_i \setminus S_j}\right],
    \end{align}
    where $\Tilde\beta^{T_l}_{S_l, S_m} = (X^{T_l\top}_{S_l}  X^{T_l}_{S_l})^+ X^{T_l\top}_{S_l}  (X^{T_l}_{S_l} \beta_{S_l} + X^{T_l}_{S_l^c\cap S_m^c} \beta_{S_l^c\cap S_m^c}+\epsilon_{T_l})$, $F_{l,m} = \mathbbm{E}\left[x_{0,S_l} x_{S_l\setminus S_m}^\top\right]$. For $\Delta_1$, because we can regard the term  $X^{T_l}_{S_l^c\cap S_m^c} \beta_{S_l^c\cap S_m^c}+\epsilon_{T_l}$ of $\Tilde\beta^{T_l}_{S_l, S_m}$ as noise term, we can apply the same procedure as in the proof for Theorem \ref{main_result1}. We need to obtain the limiting behavior of the rest of the terms $\Delta_2, \Delta_3, \Delta_4, \Delta_6$ and $\Delta_7$. Because the derivation is somewhat involved, please refer to the following section for detail. 
\end{proof}

\subsubsection{Detailed proof of Theorem \ref{main_result2}}

\begin{proof}[proof of Theorem \ref{main_result2}]
     
     For $\Delta_1$, because we can regard the term  $X^{T_l}_{S_l^c\cap S_m^c} \beta_{S_l^c\cap S_m^c}+\epsilon_{T_l}$ of $\Tilde\beta^{T_l}_{S_l, S_m}$ as the noise term, using the same procedure as in the proof for Theorem \ref{main_result1}, we can obtain, as $n,p\to\infty$,
    \begin{align*}
        \Delta_1 \to 
        \begin{cases}
            \frac{\eta_{ij}\gamma_{ij}}{\eta_i\eta_j-\eta_{ij}\gamma_{ij}}(r^2(1-\kappa_i-\kappa_j + \kappa_{ij}) + \sigma^2) \quad &\gamma_i< \eta_i, \gamma_j < \eta_j,\\
            \frac{\eta_{ij}\gamma_{ij}}{\eta_{i}\gamma_j -\eta_{ij}\gamma_{ij}} (r^2(1-\kappa_i-\kappa_j + \kappa_{ij}) + \sigma^2) \quad & \gamma_i<\eta_i, \eta_j<\gamma_j,\\
            \frac{\eta_{ij}\gamma_{ij}}{\eta_j\gamma_i -\eta_{ij}\gamma_{ij}} (r^2(1-\kappa_i-\kappa_j + \kappa_{ij})+ \sigma^2) \quad & \gamma_j<\eta_j, \eta_i<\gamma_i,\\
            \frac{(\gamma_i-\eta_i)(\gamma_j-\eta_j)}{\gamma_i\gamma_j - \eta_{ij}\gamma_{ij}}r^2\kappa_{ij} + \frac{\eta_{ij}\gamma_{ij}}{\gamma_i\gamma_j-\eta_{ij}\gamma_{ij}}(r^2(1-\kappa_i-\kappa_j + \kappa_{ij}) + \sigma^2) \quad &1<\gamma_i,\gamma_j,
        \end{cases}
    \end{align*}
    along with bounded convergence theorem.
    On the terms $\Delta_2,\Delta_3$, since 
    \begin{align*}
        \Delta_2 &=  \mathbbm{E}\left[\beta_{S_j\setminus S_i}^\top X^{T_i\top}_{S_j\setminus S_i}  X^{T_i}_{S_i} (X^{T_i\top}_{S_i}  X^{T_i}_{S_i})^+ E_{i,j}(\Tilde\beta^{T_j}_{S_j, S_i} - \beta_{S_j})\right]\\
        &=\mathbbm{E}\left[\beta_{S_j\setminus S_i}^\top X^{T_i\top}_{S_j\setminus S_i}  X^{T_i}_{S_i} (X^{T_i\top}_{S_i}  X^{T_i}_{S_i})^+ E_{i,j}\left(\left(X^{T_j\top}_{S_{j}}  X^{T_j}_{S_j}\right)^+ X^{T_j\top}_{S_j}  X^{T_j}_{S_j} -I\right) \beta_{S_j}\right]\\
        &=\sum_{k\in T_i}\mathbbm{E}\left[ x_{S_i,k} ^\top(X^{T_i\top}_{S_i}  X^{T_i}_{S_i})^+ E_{i,j}\left(\left(X^{T_j\top}_{S_{j}}  X^{T_j}_{S_j}\right)^+ X^{T_j\top}_{S_j}  X^{T_j}_{S_j} -I\right) \beta_{S_j}\beta_{S_j\setminus S_i}^\top x_{S_j\setminus S_i,k}\right],
    \end{align*}
    \begin{align*}
        \Delta_3 &= \mathbbm{E}\left[(\Tilde\beta^{T_i}_{S_i,  S_j}- \beta_{S_i})^\top E_{i,j}(X^{T_j\top}_{S_j}  X^{T_j}_{S_j})^+ X^{T_j\top}_{S_j}  X^{T_j}_{S_i\setminus S_j}\beta_{S_i\setminus S_j}\right]\\
        &=\mathbbm{E}\left[\beta_{S_i}^\top\left(X^{T_i\top}_{S_i}  X^{T_i}_{S_i}(X^{T_i\top}_{S_i}  X^{T_i}_{S_i})^+ -\mathrm{I}\right) E_{i,j}(X^{T_j\top}_{S_j}  X^{T_j}_{S_j})^+ X^{T_j\top}_{S_j}  X^{T_j}_{S_i\setminus S_j}\beta_{S_i\setminus S_j}\right]\\
        &= \sum_{k\in T_j}\mathbbm{E}\left[x_{S_i\setminus S_j,k}^\top \beta_{S_i\setminus S_j}\beta_{S_i}^\top\left(X^{T_i\top}_{S_i}  X^{T_i}_{S_i}(X^{T_i\top}_{S_i}  X^{T_i}_{S_i})^+ -\mathrm{I}\right) E_{i,j}(X^{T_j\top}_{S_j}  X^{T_j}_{S_j})^+ x_{S_j,k} \right]
    \end{align*}
    where $x_k$ denotes the $k$-th row vector of $X$. Let us consider
    \begin{align}
        \delta_{2,k}(\lambda) = \delta_{2,k} = -\lambda x_{S_i,k}^\top Q_i E_{i,j}Q_j \beta_{S_j}\beta_{S_j\setminus S_i}^\top x_{S_j\setminus S_i,k}, 
    \end{align}
    for $\lambda\ge 0$, where $Q_i(\lambda) = \left(\frac{1}{n_i}X^{T_i\top}_{S_i}  X^{T_i}_{S_i}+ \lambda\mathrm{I}\right)^{-1}$ and $Q_j(\lambda) = \left(\frac{1}{n_j}X^{T_j\top}_{S_j}  X^{T_j}_{S_j}+ \lambda\mathrm{I}\right)^{-1}$. Furthermore, we define $Q_{i,(k)}(\lambda) = \left(\frac{1}{n_i}X^{T_i\top}_{S_i,(k)}  X^{T_i}_{S_i,(k)}+ \lambda\mathrm{I}\right)^{-1}$ and $Q_{j,(k)}(\lambda) = \left(\frac{1}{n_j}X^{T_j\top}_{S_j,(k)}  X^{T_j}_{S_j,(k)}+ \lambda\mathrm{I}\right)^{-1}$, where $X_{(k)}$ denotes the $(n-1)\times p$ matrix excluding the $k$-th row vector of an $n\times p$ matrix $X$. Note that, 
    \begin{align}
        \mathbbm{E}\left[\lim_{\lambda \to 0}\frac{1}{n_i}\sum_{k\in T_i}\delta_{2,k}\right] =\mathbbm{E}\left[\lim_{\lambda \to 0}\frac{1}{n_i}\sum_{k\in T_i}-\lambda x_{S_i,k}^\top Q_i E_{i,j}Q_j \beta_{S_j}\beta_{S_j\setminus S_i}^\top x_{S_j\setminus S_i,k}\right] = \Delta_2.
    \end{align}
    and $\frac{1}{n_i}\sum_{k\in T_i}\delta_{2,k}$ can be uniformly bounded from above on $\lambda \in \left[0, C\right]$ for $n$ large enough and a positive real number $C>0$. This can be seen from the results of Theorem 1 and its corollary in \cite{bai1998no}.
    If $k \in T_1\cap T_2$, we can obtain
    \begin{align}
        \delta_{2,k} &= -\lambda x_{S_i,k}^\top Q_i E_{i,j}Q_j \beta_{S_j}\beta_{S_j\setminus S_i}^\top x_{S_j\setminus S_i,k} \nonumber\\
        &=  -\lambda \frac{x_{S_i,k}^\top Q_{i,(k)} E_{i,j}Q_{j,(k)} \beta_{S_j}\beta_{S_j\setminus S_i}^\top x_{S_j\setminus S_i,k}}{\left(1+\frac{1}{n_i}x_{S_i,k}^\top Q_{i,(k)}x_{S_i,k}\right)} + \lambda \frac{\frac{1}{n_j}x_{S_i,k}^\top Q_{i,(k)} E_{i,j}Q_{j,(k)}x_{S_j,k}x_{S_j,k}^\top Q_{j,(k)}\beta_{S_j}\beta_{S_j\setminus S_i}^\top x_{S_j\setminus S_i,k}}{\left(1+\frac{1}{n_i}x_{S_i,k}^\top Q_{i,(k)}x_{S_i,k}\right)\left(1 +  \frac{1}{n_j}x_{S_j,k}^\top Q_{j,(k)}x_{S_j,k}\right)}.
    \end{align}
    Furthermore, 
    \begin{align}
        \mathbbm{E}\left[\left| \lambda \frac{x_{S_i,k}^\top Q_{i,(k)} E_{i,j}Q_{j,(k)} \beta_{S_j}\beta_{S_j\setminus S_i}^\top x_{S_j\setminus S_i,k}}{\left(1+\frac{1}{n_i}x_{S_i,k}^\top Q_{i,(k)}x_{S_i,k}\right)} - \lambda \frac{x_{S_i,k}^\top Q_{i,(k)} E_{i,j}Q_{j,(k)} \beta_{S_j}\beta_{S_j\setminus S_i}^\top x_{S_j\setminus S_i,k}}{\left(1+\frac{1}{n_i}\mathrm{Tr}\left( Q_{i,(k)}\right)\right)} \right|^q\right] = O\left(\frac{1}{n^{q/2}}\right),
    \end{align}
    \begin{align}
        &\mathbbm{E}\left[\left| \lambda \frac{\frac{1}{n_j}x_{S_i,k}^\top Q_{i,(k)} E_{i,j}Q_{j,(k)}x_{S_j,k}x_{S_j,k}^\top Q_{j,(k)}\beta_{S_j}\beta_{S_j\setminus S_i}^\top x_{S_j\setminus S_i,k}}{\left(1+\frac{1}{n_i}x_{S_i,k}^\top Q_{i,(k)}x_{S_i,k}\right)\left(1 +  \frac{1}{n_j}x_{S_j,k}^\top Q_{j,(k)}x_{S_j,k}\right)} \right.\right.\\
        & \quad - \left.\left. \lambda \frac{\frac{1}{n_j} \mathrm{Tr}\left(Q_{i,(k)} E_{i,j}Q_{j,(k)}E_{i,j}^\top\right)x_{S_j,k}^\top Q_{j,(k)}\beta_{S_j}\beta_{S_j\setminus S_i}^\top x_{S_j\setminus S_i,k}}{\left(1+\frac{1}{n_i}\mathrm{Tr}\left( Q_{i,(k)}\right)\right)\left(1 +  \frac{1}{n_j}\mathrm{Tr}\left( Q_{j,(k)}\right)\right)}\right|^q\right] = O\left(\frac{1}{n^{q/2}}\right),
    \end{align}
      by combining Lemmas \ref{bai_ineqaulity1}, \ref{bai_inequality2}, \ref{rubio_inequality1} with the Cauchy-Schwartz inequality. On the other hand, if $k \in T_i \setminus T_j$, we can obtain
    \begin{align}
        \delta_{2,k} &= -\lambda x_{S_i,k}^\top Q_i E_{i,j}Q_j \beta_{S_j}\beta_{S_j\setminus S_i}^\top x_{S_j\setminus S_i,k} \nonumber\\
        &=  -\lambda \frac{x_{S_i,k}^\top Q_{i,(k)} E_{i,j}Q_{j} \beta_{S_j}\beta_{S_j\setminus S_i}^\top x_{S_j\setminus S_i,k}}{\left(1+\frac{1}{n}x_{S_i,k}^\top Q_{i,(k)}x_{S_i,k}\right)}
    \end{align}
    and 
    \begin{align}
        \mathbbm{E}\left[\left| \lambda \frac{x_{S_i,k}^\top Q_{i,(k)} E_{i,j}Q_{j} \beta_{S_j}\beta_{S_j\setminus S_i}^\top x_{S_j\setminus S_i,k}}{\left(1+\frac{1}{n_i}x_{S_i,k}^\top Q_{i,(k)}x_{S_i,k}\right)} - \lambda \frac{x_{S_i,k}^\top Q_{i,(k)} E_{i,j}Q_{j} \beta_{S_j}\beta_{S_j\setminus S_i}^\top x_{S_j\setminus S_i,k}}{\left(1+\frac{1}{n_i}\mathrm{Tr}\left( Q_{i,(k)}\right)\right)} \right|^q\right] = O\left(\frac{1}{n^{q/2}}\right).
    \end{align}
    Thus we obtain
    \begin{align}
        \Delta_2 \to
        \begin{cases}
            0 \quad &\gamma_j < \eta_j,\\
            \frac{\eta_{ij}\gamma_{ij}(\gamma_{j}-\eta_j)}{\gamma_{j}(\eta_i\gamma_j-\eta_{ij}\gamma_{ij})}r^2(\kappa_j -\kappa_{ij}) \quad &\gamma_i <\eta_i, \eta_j < \gamma_j,\\
            \frac{\eta_{ij}\gamma_{ij}(\gamma_{j}-\eta_j)}{\gamma_{j}(\gamma_i\gamma_j-\eta_{ij}\gamma_{ij})}r^2(\kappa_j -\kappa_{ij}) \quad &\eta_i < \gamma_i, \eta_j<\gamma_j, 
        \end{cases} 
    \end{align}
    with Lemma \ref{important_lemma}, Lemma \ref{rubio_convergence1}, Arz\`ela-Ascoli theorem, and the bounded convergence theorem. $\Delta_3 \to 0$ can be shown similarly.
    The term $\Delta_4$ can be rewritten as:
    \begin{align*}
        \Delta_4 &= \mathbbm{E}\left[\beta_{S_j\setminus S_i}^\top X^{T_i\top}_{S_j\setminus S_i}  X^{T_i}_{S_i} (X^{T_i\top}_{S_i}  X^{T_i}_{S_i})^+E_{i,j}(X^{T_j\top}_{S_j}  X^{T_j}_{S_j})^+ X^{T_j\top}_{S_j}  X^{T_j}_{S_i\setminus S_j}\beta_{S_i\setminus S_j}\right]\\
        &= \frac{1}{n_{ij}}\sum_{k\in T_i\cap T_j}\frac{n_{ij}}{n_in_j}\left(\mathbbm{E}\left[ x_{S_i,k}^\top \left(\frac{1}{n_i}X^{T_i\top}_{S_i}  X^{T_i}_{S_i}\right)^+E_{i,j}\left(\frac{1}{n_j}X^{T_j\top}_{S_j}  X^{T_j}_{S_j}\right)^+ X^{T_j\top}_{S_j, (k)}  X^{T_j}_{S_i\setminus S_j, (k)}\beta_{S_i\setminus S_j}\beta_{S_j\setminus S_i}^\top x_{S_j\setminus S_i,k}\right] \right.\\
        &\quad \left.+ \mathbbm{E}\left[ x_{S_i,k}^\top\left(\frac{1}{n_i}X_{S_i}^\top X_{S_i}\right)^+E_{i,j}\left(\frac{1}{n_j}X^{T_j\top}_{S_j}  X^{T_j}_{S_j}\right)^+ x_{S_j,k} x_{S_i\setminus S_j, k}^\top\beta_{S_i\setminus S_j}\beta_{S_j\setminus S_i}^\top x_{S_j\setminus S_i,k}\right]\right)\\
        & \quad + \frac{1}{n_{ij}}\sum_{k\in T_i\setminus T_j}\frac{n_{ij}}{n_in_j}\mathbbm{E}\left[ x_{S_i,k}^\top \left(\frac{1}{n_i}X^{T_i\top}_{S_i}  X^{T_i}_{S_i}\right)^+E_{i,j}\left(\frac{1}{n_j}X^{T_j\top}_{S_j}  X^{T_j}_{S_j}\right)^+ X^{T_j\top}_{S_j}  X^{T_j}_{S_i\setminus S_j}\beta_{S_i\setminus S_j}\beta_{S_j\setminus S_i}^\top x_{S_j\setminus S_i,k}\right] \\
        &= \Delta_{41} + \Delta_{42} + \Delta_{43},
    \end{align*}
    For $\Delta_4\to 0$, we consider the following: 
    \begin{align}
        \delta_{41,k}(\lambda) = \delta_{41,k} = \frac{n_{ij}}{n_in_j} x_{S_i,k}^\top Q_iE_{i,j}Q_j X_{S_j, (k)}^\top X_{S_i\setminus S_j, (k)}\beta_{S_i\setminus S_j}\beta_{S_j\setminus S_i}^\top x_{S_j\setminus S_i,k}
    \end{align}
    \begin{align}
        \delta_{42,k}(\lambda) = \delta_{42,k} = \frac{n_{ij}}{n_in_j} x_{S_i,k}^\top Q_iE_{i,j}Q_j x_{S_j,k} x_{S_i\setminus S_j, k}^\top\beta_{S_i\setminus S_j}\beta_{S_j\setminus S_i}^\top x_{S_j\setminus S_i,k}
    \end{align}
    for $\lambda>0$. We assume that $k \in T_i\cap T_j$.
    Note that
    \begin{align*}
    \mathbbm{E}\left[\lim_{\lambda\to 0}\sum_{k\in T_i\cap T_j}\delta_{41,k}\right] = \Delta_{41}
    \end{align*}
    \begin{align*}
    \mathbbm{E}\left[\lim_{\lambda\to 0}\sum_{k\in T_i\cap T_j}\delta_{42,k}\right] = \Delta_{42},
    \end{align*}
    and $\sum_{k\in T_i\cap T_j}\delta_{41,k}$ and $\sum_{k\in T_i\cap T_j}\delta_{42,k}$ are uniformly bounded on $\lambda\in [0,C]$ for a positive real number $C$.
    First, we demonstrate that $\Delta_{42}\to 0$. Using the technique in the proof of Theorem \ref{main_result1}, we obtain
    \begin{align}
          &\mathbbm{E}\left[\left|\delta_{42,k}-\frac{\frac{n_{ij}}{n_in_j}\mathrm{Tr}\left(Q_{i,(k)}E_{i,j}Q_{j,(k)}E_{i,j}^\top\right)}{(1+\frac{1}{n_i}\mathrm{Tr}\left(Q_{i,(k)}\right))(1+\frac{1}{n_j}\mathrm{Tr}\left(Q_{j,(k)}\right))}  x_{S_i\setminus S_j, k}^\top\beta_{S_i\setminus S_j}\beta_{S_j\setminus S_i}^\top x_{S_j\setminus S_i,k}  \right|^q\right]\nonumber\\ &=\mathbbm{E}\left[\left| \left(\frac{n_{ij}}{n_in_j} x_{S_i,k}^\top Q_iE_{i,j}Q_j x_{S_j,k} -\frac{\frac{n_{ij}}{n_in_j}\mathrm{Tr}\left(Q_{i,(k)}E_{i,j}Q_{j,(k)}E_{i,j}^\top\right)}{(1+\frac{1}{n_i}\mathrm{Tr}\left(Q_{i,(k)}\right))(1+\frac{1}{n_j}\mathrm{Tr}\left(Q_{j,(k)}\right))} \right) x_{S_i\setminus S_j, k}^\top\beta_{S_i\setminus S_j}\beta_{S_j\setminus S_i}^\top x_{S_j\setminus S_i,k}\right|^q\right]\nonumber\\
          &\le\mathbbm{E}^{1/2}\left[\left|\frac{n_{ij}}{n_in_j} x_{S_i,k}^\top Q_iE_{i,j}Q_j x_{S_j,k} -\frac{\frac{n_{ij}}{n_in_j}\mathrm{Tr}\left(Q_{i,(k)}E_{i,j}Q_{j,(k)}E_{i,j}^\top\right)}{(1+\frac{1}{n_i}\mathrm{Tr}\left(Q_{i,(k)}\right))(1+\frac{1}{n_j}\mathrm{Tr}\left(Q_{j,(k)}\right))} \right|^{2q}\right] \nonumber\\
          &\quad\quad\quad\quad\times\mathbbm{E}^{1/2}\left[\left|x_{S_i\setminus S_j, k}^\top\beta_{S_i\setminus S_j}\beta_{S_j\setminus S_i}^\top x_{S_j\setminus S_i,k}\right|^{2q}\right] \label{eqn7.1}
    \end{align}
    The first term in (\ref{eqn7.1}) can be written as
    \begin{align}
        &\mathbbm{E}^{1/2}\left[\left| \frac{n_{ij}}{n_in_j} x_{S_i,k}^\top Q_iE_{i,j}Q_j x_{S_j,k}-\frac{\frac{n_{ij}}{n_in_j}\mathrm{Tr}\left(Q_{i,(k)}E_{i,j}Q_{j,(k)}E_{i,j}^\top\right)}{(1+\frac{1}{n_i}\mathrm{Tr}\left(Q_{i,(k)}\right))(1+\frac{1}{n_j}\mathrm{Tr}\left(Q_{j,(k)}\right))}\right|^{2q}\right]\nonumber\\
        &=\mathbbm{E}^{1/2}\left[\left|  \frac{\frac{n_{ij}}{n_in_j}x_{S_i,k}^\top Q_{i,(k)}E_{i,j}Q_{j,(k)} x_{S_j,k}}{(1+\frac{1}{n_i}x_{S_i,k}^\top Q_{i,(k)}x_{S_i,k})(1+\frac{1}{n_j}x_{S_j,k}^\top Q_{j,(k)}x_{S_j,k})}-\frac{\frac{n_{ij}}{n_in_j}\mathrm{Tr}\left(Q_{i,(k)}E_{i,j}Q_{j,(k)}E_{i,j}^\top\right)}{(1+\frac{1}{n_i}\mathrm{Tr}\left(Q_{i,(k)}\right))(1+\frac{1}{n_j}\mathrm{Tr}\left(Q_{j,(k)}\right))}\right|^{2q}\right]\nonumber\\
        &=O\left(\frac{1}{n^{q/2}}\right)\label{eqn6.1}
    \end{align}
     In (\ref{eqn6.1}), we use Lemma \ref{bai_ineqaulity1} and Lemma \ref{bai_inequality2}. Hence, from Lemma \ref{rubio_convergence1} and Remark \ref{remark_for_rubio}, we obtain
     \begin{align}
       \frac{1}{n_{ij}}\mathbbm{E}\left[\sum_{k\in T_i\cap T_j} \frac{\frac{n_{ij}}{n_in_j}\mathrm{Tr}\left(Q_{i,(k)}E_{i,j}Q_{j,(k)}E_{i,j}^\top\right)}{(1+\frac{1}{n_i}\mathrm{Tr}\left(Q_{i,(k)}\right))(1+\frac{1}{n_j}\mathrm{Tr}\left(Q_{j,(k)}\right))}       x_{S_i\setminus S_j, k}^\top\beta_{S_i\setminus S_j}\beta_{S_j\setminus S_i}^\top x_{S_j\setminus S_i,k} \right] \to 0.
    \end{align}
     $\Delta_{42}\to 0$ follows by applying Arz\`ela-Ascoli theorem and bounded convergence theorem. Next, we consider the term $\delta_{41,k}$. Observing
    \begin{align}
        \delta_{41,k} &=  \frac{n_{ij}}{n_in_j} x_{S_i,k}^\top Q_iE_{i,j}Q_j X^{T_j\top}_{S_j, (k)}  X^{T_j}_{S_i\setminus S_j, (k)}\beta_{S_i\setminus S_j}\beta_{S_j\setminus S_i}^\top x_{S_j\setminus S_i,k} \nonumber\\
        &= \frac{n_{ij}}{n_in_j} \frac{x_{S_i,k}^\top Q_{i,(k)}E_{i,j}Q_{j,(k)} X^{T_j\top}_{S_j, (k)}  X^{T_j}_{S_i\setminus S_j, (k)}\beta_{S_i\setminus S_j}\beta_{S_j\setminus S_i}^\top x_{S_j\setminus S_i,k} }{(1+\frac{1}{n_i}x_{S_i,k}^\top Q_{i,(k)}x_{S_i,k})} \nonumber\\
        &\quad \quad- \frac{n_{ij}}{n_in_j^2}\frac{x_{S_i,k}^\top Q_{i,(k)}E_{i,j}Q_{j,(k)}x_{S_j,k} x_{S_j,k}^\top Q_{j,(k)}X^{T_j\top}_{S_j, (k)}  X^{T_j}_{S_i\setminus S_j, (k)}\beta_{S_i\setminus S_j}\beta_{S_j\setminus S_i}^\top x_{S_j\setminus S_i,k} }{(1+\frac{1}{n_i}x_{S_i,k}^\top Q_{i,(k)}x_{S_i,k})(1+\frac{1}{n_j}x_{S_j,k}^\top Q_{j,(k)}x_{S_j,k})}, \label{eqn7.2}
    \end{align}
    the expectation of the first term in (\ref{eqn7.2}) converges to $0$ by using the procedure similar to $\delta_{42,k}$. For the second term of (\ref{eqn7.2}), we first obtain
    \begin{align}
    &\mathbbm{E}\left[\left|\delta_{41,k}\right|^q\right] \nonumber\\
    &= \mathbbm{E}\left[\left| \frac{n_{ij}}{n_in_j^2}\frac{x_{S_i,k}^\top Q_{i,(k)}E_{i,j}Q_{j,(k)}x_{S_j,k} x_{S_j,k}^\top Q_{j,(k)} X^{T_j\top}_{S_j, (k)}  X^{T_j}_{S_i\setminus S_j, (k)}\beta_{S_i\setminus S_j}\beta_{S_j\setminus S_i}^\top x_{S_j\setminus S_i,k} }{(1+\frac{1}{n_i}x_{S_i,k}^\top Q_{i,(k)}x_{S_i,k})(1+\frac{1}{n_j}x_{S_j,k}^\top Q_{j,(k)}x_{S_j,k})}\right.\right.\nonumber\\
    &\quad \quad\left.\left.- \frac{n_{ij}}{n_in_j^2}\frac{ \mathrm{Tr}\left(Q_{i,(k)}E_{i,j}Q_{j,(k)}E_{i,j}^\top\right) x_{S_j,k}^\top Q_{j,(k)} X^{T_j\top}_{S_j, (k)}  X^{T_j}_{S_i\setminus S_j, (k)}\beta_{S_i\setminus S_j}\beta_{S_j\setminus S_i}^\top x_{S_j\setminus S_i,k} }{(1+\frac{1}{n_i}\mathrm{Tr} (Q_{i,(k)}))(1+\frac{1}{n_j}\mathrm{Tr}(Q_{j,(k)}))}\right|^q\right]
    =O\left(\frac{1}{n^{q/2}}\right).
    \end{align}
     from Lemmas \ref{bai_ineqaulity1} and \ref{rubio_convergence1} along with the Cauchy-Schwartz inequality and the fact 
     \begin{align}
         &\mathbbm{E}\left[\left|\frac{1}{n_j}x_{S_j,k}^\top Q_{j,(k)} X^{T_j\top}_{S_j, (k)}  X^{T_j}_{S_i\setminus S_j, (k)}\beta_{S_i\setminus S_j}\beta_{S_j\setminus S_i}^\top x_{S_j\setminus S_i,k}\right|^{2q}\right]\nonumber\\
         &\le C_q \|\beta_{S_j\setminus S_i}\|^{q} \mathbbm{E}\left[\left|\frac{1}{n_j^2}\beta_{S_i\setminus S_j}^\top X^{T_j\top}_{S_i\setminus S_j, (k)}  X^{T_j}_{S_j, (k)} Q_{j,(k)}^2 X^{T_j\top}_{S_j, (k)}  X^{T_j}_{S_i\setminus S_j, (k)}\beta_{S_i\setminus S_j}\right|^{q}\right]\label{eqn11.1}\\
         &\le \frac{1}{\lambda^2}D_q \|\beta_{S_j\setminus S_i}\|^{q} \mathbbm{E}\left[\left|\frac{1}{n_j}\beta_{S_i\setminus S_j}^\top  X^{T_j\top}_{S_i\setminus S_j, (k)}  X^{T_j}_{S_i\setminus S_j, (k)}\beta_{S_i\setminus S_j}\right|^{q}\right]\label{eqn11.2}\\
         &= \frac{1}{\lambda^2}D_q \|\beta_{S_j\setminus S_i}\|^{q} \mathbbm{E}\left[\left| \frac{1}{n_j}\sum_{l\in T_j, l\neq k}x_{S_i\setminus S_j, l}^\top\beta_{S_i\setminus S_j}  \beta_{S_i\setminus S_j}^\top x_{S_i\setminus S_j, l}\right|^{q}\right]\nonumber\\
         &\le \frac{1}{\lambda^2}D_q \|\beta_{S_j\setminus S_i}\|^{q} \frac{1}{n_j}\sum_{l\in T_j, l\neq k}\mathbbm{E}\left[\left| x_{S_i\setminus S_j, l}^\top\beta_{S_i\setminus S_j}  \beta_{S_i\setminus S_j}^\top x_{S_i\setminus S_j, l}\right|^{q}\right] \label{eqn11.3}\\
         &= O\left(1\right) \label{eqn11.4}
     \end{align}
    where $C_q,D_q$ are constants that depend only on $q$. Here, we use Lemma \ref{rubio_inequality1} in (\ref{eqn11.1}), the fact that $\|Q_{j,(k)}\|_2 < 1/\lambda$ and $\|X^{T_j}_{S_j, (k)}/\sqrt{n_j}\|$ is upper-bounded by a constant for all large $n$ (e.g. see Theorem 1 and its Corollary of \cite{bai1998no}) in (\ref{eqn11.2}), the fact that the mapping $x\mapsto |x|^q$ is convex in (\ref{eqn11.3}), and Lemma \ref{rubio_inequality1} in (\ref{eqn11.4}). In addition, from Lemma \ref{rubio_convergence1} and Remark \ref{remark_for_rubio}, we obtain
    \begin{align}
        &\frac{1}{n_{ij}}\sum_{k\in T_i\cap T_j}\mathbbm{E}\left[\frac{n_{ij}}{n_in_j^2}\frac{ \mathrm{Tr}\left(Q_{i,(k)}E_{i,j}Q_{j,(k)}E_{i,j}^\top\right) x_{S_j,k}^\top Q_{j,(k)} X^{T_j\top}_{S_j, (k)}  X^{T_j}_{S_i\setminus S_j, (k)}\beta_{S_i\setminus S_j}\beta_{S_j\setminus S_i}^\top x_{S_j\setminus S_i,k} }{(1+\frac{1}{n_i}\mathrm{Tr} (Q_{i,(k)}))(1+\frac{1}{n_j}\mathrm{Tr}(Q_{j,(k)}))}\right] \nonumber\\
        &\quad\quad -\frac{1}{n_{ij}}\sum_{k\in T_i\cap T_j}\mathbbm{E}\left[\frac{n_{ij}}{n_in_j^2}\frac{ \mathrm{Tr}\left(Q_{i,(k)}E_{i,j}Q_{j,(k)}E_{i,j}^\top\right) \mathrm{Tr}\left( Q_{j,(k)} X^{T_j\top}_{S_j, (k)}  X^{T_j}_{S_i\setminus S_j, (k)}\beta_{S_i\setminus S_j}\beta_{S_j\setminus S_i}^\top F_{j,i}\right) }{(1+\frac{1}{n_i}\mathrm{Tr} (Q_{i,(k)}))(1+\frac{1}{n_j}\mathrm{Tr}(Q_{j,(k)}))} \right] \to 0
    \end{align}
    Moreover, we can obtain 
    \begin{align}
         &\mathbbm{E}\left[\left| \frac{n_{ij}}{n_in_j^2}\frac{ \mathrm{Tr}\left(Q_{i,(k)}E_{i,j}Q_{j,(k)}E_{i,j}^\top\right) \mathrm{Tr}\left(X_{S_j, (k)}^\top X_{S_i\setminus S_j, (k)}\beta_{S_i\setminus S_j}\beta_{S_j\setminus S_i}^\top F_{j,i}\right) }{(1+\frac{1}{n}\mathrm{Tr} (Q_{i,(k)}))(1+\frac{1}{n}\mathrm{Tr}(Q_{j,(k)}))}\right|^q\right]
        =O\left(\frac{1}{n^{q/2}}\right).
    \end{align}
    from Remark \ref{remark_for_rubio}. Therefore, $\Delta_{41}\to0$ together with Arz\`ela-Ascoli theorem
    and Lemma \ref{lem_for_sum_of_rv}. $\Delta_{43}\to 0$ can be proven in a method analogous to the proof of $\Delta_{41}\to 0$.

    In terms of $\Delta_6, \Delta_7$, when $\gamma_j < \eta_j$,
    \begin{align*}
        \Delta_6 &= \mathbbm{E}\left[(\beta_{S_j} - \hat\beta_{S_j})^\top F_{j,i} \beta_{S_j \setminus S_i}\right]\\
        &= \mathbbm{E}\left[\beta_{S_j}^\top(\mathrm{I} - (X_{S_j}^\top X_{S_j})^+ X_{S_j}^\top X_{S_j}) F_{j,i} \beta_{S_j \setminus S_i}\right]\\
        &=0,
    \end{align*}
    and when $\gamma_j > \eta_j$
    \begin{align*}
        \Delta_6 &= \mathbbm{E}\left[(\beta_{S_j} - \hat\beta_{S_j})^\top F_{j,i} \beta_{S_j \setminus S_i}\right]\\
        &= \mathbbm{E}\left[\beta_{S_j}^\top(\mathrm{I} - (X_{S_j}^\top X_{S_j})^+ X_{S_j}^\top X_{S_j}) F_{j,i} \beta_{S_j \setminus S_i}\right] \\
        &\to \left(1-\frac{\eta_j}{\gamma_j}\right)r^2(\kappa_j -\kappa_{ij}).
    \end{align*}
    as $n,p\to\infty$, which can be shown, for example, by following the same procedure as in the proof of Theorem 1 in \cite{hastie2022surprises}. The proof of the convergence of $\Delta_7$ remains the same. This completes the proof.
\end{proof}

\subsection{Some widely-known results}
In this subsection, we introduce some widely-known results that are frequently used in the proofs of theorems. Throughout the following, $K_p$ is a constant that may have different values for each appearance.

\begin{lemma}
\label{lem_for_sum_of_rv}
Let $\{y_n^{(N)}, 1\le n\le N\}$ denote a collection of random variables such that
\begin{align*}
    \max_{1\le n \le N} \mathbbm{E}\left[|y_n^{(N)}|^p\right] \le \frac{K_p}{N^{1+\delta}},
\end{align*}
for some constants $p \ge 1$, $\delta >0$ and $K_p$  depending on $p$ but not on $N$.
Then, almost surely as $N\to\infty$,
\begin{align*}
    \frac{1}{N}\sum_{n=1}^N |y_n^{(N)}|\to 0.
\end{align*}
\end{lemma}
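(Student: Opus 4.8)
The plan is to establish the almost-sure convergence via a $p$-th moment bound on the average combined with the first Borel--Cantelli lemma. Since $\{y_n^{(N)}\}$ is a triangular array whose entries depend on $N$, no classical strong law of large numbers applies directly; instead I would control $\mathbbm{E}[S_N^p]$ for the partial average and sum the resulting tail probabilities over $N$.

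First I would set $S_N = \frac{1}{N}\sum_{n=1}^N |y_n^{(N)}|$ and bound its $p$-th moment. Because $x\mapsto x^p$ is convex for $p\ge 1$, Jensen's inequality gives $S_N^p \le \frac{1}{N}\sum_{n=1}^N |y_n^{(N)}|^p$, so that
\begin{align*}
\mathbbm{E}[S_N^p] \le \frac{1}{N}\sum_{n=1}^N \mathbbm{E}\left[|y_n^{(N)}|^p\right] \le \frac{1}{N}\cdot N \cdot \frac{K_p}{N^{1+\delta}} = \frac{K_p}{N^{1+\delta}},
\end{align*}
where the second inequality uses the hypothesis $\max_{1\le n\le N}\mathbbm{E}[|y_n^{(N)}|^p]\le K_p/N^{1+\delta}$.

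Next, for a fixed $\epsilon>0$ I would apply Markov's inequality to the nonnegative variable $S_N^p$, obtaining $\mathrm{Pr}(S_N>\epsilon)\le \epsilon^{-p}\,\mathbbm{E}[S_N^p]\le K_p\,\epsilon^{-p}\,N^{-(1+\delta)}$. Since $\delta>0$, the series $\sum_{N\ge 1} N^{-(1+\delta)}$ converges, hence $\sum_{N\ge 1}\mathrm{Pr}(S_N>\epsilon)<\infty$. The first Borel--Cantelli lemma then yields $\mathrm{Pr}(S_N>\epsilon \text{ i.o.})=0$; applying this along a sequence $\epsilon_k\downarrow 0$ and taking the union of the corresponding null sets shows that $S_N\to 0$ almost surely, which is the claim.

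There is no substantive obstacle here: the argument is routine once the convexity bound is in place. The only point requiring care is that the $N$-dependence of the array rules out the strong law, so what makes the approach work is the summability of the tail bounds, guaranteed precisely by the extra factor $N^{-\delta}$ beyond the $N^{-1}$ one would obtain from a mere bounded-moment assumption; this surplus decay is exactly what upgrades convergence in probability to almost-sure convergence.
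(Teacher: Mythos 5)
Your proof is correct and follows essentially the same route as the paper: a convexity (Jensen) bound on the $p$-th moment of the average yielding $\mathbbm{E}[S_N^p]\le K_p N^{-(1+\delta)}$, followed by Markov's inequality and the first Borel--Cantelli lemma. The only difference is that you spell out the Markov step and the union over $\epsilon_k\downarrow 0$, which the paper leaves implicit.
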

\begin{proof}
    Since, the mapping $x \to |x|^p$ is a convex function, 
    \begin{align}
        \mathbbm{E}\left[\left|\frac{1}{N}\sum_{n=1}^N|y_n^{(N)}|\right|^p\right] &=  \mathbbm{E}\left[\left|\frac{1}{N}\sum_{n=1}^N|y_n^{(N)}|\right|^p\right]\\
        &\le  \frac{1}{N}\sum_{n=1}^N \mathbbm{E}\left[\left|y_n^{(N)}\right|^p\right] \nonumber\\
        &\le \frac{K_p}{N^{1+\delta}},
    \end{align}
    for large $N$, where the last inequality is due to the assumption.
    Hence, from Borel–Cantelli lemma, we can show the result.
\end{proof}

The following two lemmas are adapted from \cite{rubio2011spectral}.

\begin{lemma}[\cite{rubio2011spectral}, Lemma 3]
\label{rubio_inequality1}
Let $\xi\in\mathbbm{C}^M$ denote a random vector with i.i.d. entries having mean zero and variance one, and $C\in\mathbbm{C}^{M\times M}$ an arbitrary nonrandom matrix. Then, for any $p\ge 2$,
\begin{align*}
\mathbbm{E}\left[|\xi^\top C\xi|^p\right]\le \|C\|_{\mathrm{Tr}}^p\left(K_{1,p} + K_{2,p}\mathbbm{E}\left[|\zeta|^{2p}\right]\right),    
\end{align*}
where $\zeta$ denotes a particular entry of $\xi$ and the constants $K_{1,p}$ and $K_{2,p}$ do not depend on $M$, the entries of $C$, nor the distribution of $\zeta$.
\end{lemma}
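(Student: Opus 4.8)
The plan is to reduce the quadratic form $\xi^\top C\xi$ to a convex combination of products of two linear forms in $\xi$ by means of the singular value decomposition of $C$, and then to control each linear form with a Rosenthal-type moment inequality. Write $C=\sum_{k=1}^{M}\sigma_k a_k b_k^{*}$, where $\sigma_1,\dots,\sigma_M\ge 0$ are the singular values (so that $\|C\|_{\mathrm{Tr}}=\sum_{k}\sigma_k$) and $\{a_k\},\{b_k\}$ are orthonormal systems. Then $\xi^\top C\xi=\sum_{k}\sigma_k(\xi^\top a_k)(b_k^{*}\xi)$, where each factor $\xi^\top a_k=\sum_i a_{k,i}\xi_i$ and $b_k^{*}\xi=\sum_i\overline{b_{k,i}}\,\xi_i$ is a linear form whose coefficient vector has unit Euclidean norm. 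This reduces the whole estimate to moments of linear forms with normalized coefficients.

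Next I would apply Jensen's inequality with the probability weights $\sigma_k/\|C\|_{\mathrm{Tr}}$. Since $t\mapsto|t|^{p}$ is convex for $p\ge 1$,
\[
|\xi^\top C\xi|^{p}\le\|C\|_{\mathrm{Tr}}^{p}\Bigl(\sum_{k}\frac{\sigma_k}{\|C\|_{\mathrm{Tr}}}\,|\xi^\top a_k|\,|b_k^{*}\xi|\Bigr)^{p}\le\|C\|_{\mathrm{Tr}}^{p-1}\sum_{k}\sigma_k\,|\xi^\top a_k|^{p}\,|b_k^{*}\xi|^{p}.
\]
Taking expectations and bounding each summand by the Cauchy--Schwarz inequality gives
\[
\mathbbm{E}\bigl[|\xi^\top C\xi|^{p}\bigr]\le\|C\|_{\mathrm{Tr}}^{p-1}\sum_{k}\sigma_k\,\mathbbm{E}^{1/2}\bigl[|\xi^\top a_k|^{2p}\bigr]\,\mathbbm{E}^{1/2}\bigl[|b_k^{*}\xi|^{2p}\bigr],
\]
so that everything hinges on a single quantity: the $2p$-th moment of a linear form $\sum_i c_i\xi_i$ with $\sum_i|c_i|^{2}=1$.

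The core step is to bound this moment uniformly. Applying the Rosenthal inequality (equivalently Marcinkiewicz--Zygmund) to the independent, mean-zero summands $c_i\xi_i$ yields, for $q=2p$, a bound of the form $C_p\bigl[(\sum_i|c_i|^{2}\mathbbm{E}|\xi_i|^{2})^{p}+\sum_i|c_i|^{2p}\mathbbm{E}|\xi_i|^{2p}\bigr]$. Here $\sum_i|c_i|^{2}\mathbbm{E}|\xi_i|^{2}=1$, and since $|c_i|^{2p}=(|c_i|^{2})^{p}\le|c_i|^{2}$ whenever $\sum_i|c_i|^{2}=1$ and $p\ge 1$, we have $\sum_i|c_i|^{2p}\mathbbm{E}|\xi_i|^{2p}\le\mathbbm{E}|\zeta|^{2p}$; hence $\mathbbm{E}[|\sum_i c_i\xi_i|^{2p}]\le C_p(1+\mathbbm{E}|\zeta|^{2p})$. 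Substituting this bound for both $a_k$ and $b_k$ and using $\sum_k\sigma_k=\|C\|_{\mathrm{Tr}}$ collapses the sum and delivers $\mathbbm{E}[|\xi^\top C\xi|^{p}]\le\|C\|_{\mathrm{Tr}}^{p}\,C_p(1+\mathbbm{E}|\zeta|^{2p})$, i.e.\ the claim with $K_{1,p}=K_{2,p}=C_p$.

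The main obstacle is ensuring that the final constant depends only on $p$, and not on $M$, on the entries of $C$, nor on the law of $\zeta$; this is exactly what the Rosenthal inequality guarantees, provided the normalization $\sum_i|c_i|^{2}=1$ is exploited to absorb the dimension into the first term. A minor technical point is that $\xi$ is complex-valued, so the Rosenthal bound should be applied after splitting each linear form into its real and imaginary parts, each a sum of independent mean-zero real variables; this costs only a further $p$-dependent factor and leaves the stated form of the bound intact.
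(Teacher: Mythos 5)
This lemma is one the paper imports verbatim from Rubio and Mestre (2011, Lemma~3) and states in its ``widely-known results'' subsection without proof, so there is no in-paper argument to compare yours against; what you have written is a correct, self-contained derivation. Your chain of steps holds up: the singular value decomposition $C=\sum_k\sigma_k a_k b_k^{*}$ turns $\xi^\top C\xi$ into $\sum_k\sigma_k(\xi^\top a_k)(b_k^{*}\xi)$ with unit-norm coefficient vectors; Jensen with the weights $\sigma_k/\|C\|_{\mathrm{Tr}}$ and then Cauchy--Schwarz correctly isolate the single quantity $\mathbbm{E}\bigl[|\sum_i c_i\xi_i|^{2p}\bigr]$ with $\sum_i|c_i|^2=1$; and Rosenthal's inequality, whose constant depends only on the exponent and not on $M$ or the law of the summands, gives the uniform bound $C_p(1+\mathbbm{E}|\zeta|^{2p})$ because $\sum_i|c_i|^{2p}\le\sum_i|c_i|^2=1$. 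This is essentially the standard route to trace-norm moment bounds for bilinear forms and is close in spirit to the original source's argument, which likewise reduces to moments of linear forms after diagonalizing. Two trivial housekeeping points you may wish to make explicit: the Jensen step divides by $\|C\|_{\mathrm{Tr}}$, so the case $C=0$ should be dispatched separately (the inequality is then $0\le 0$); and the real/imaginary splitting you mention at the end is indeed all that is needed to apply the real Rosenthal inequality, at the cost of a factor $2^{p-1}$ absorbed into $K_{1,p},K_{2,p}$.
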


\begin{lemma}[\cite{rubio2011spectral}, Lemma 4]
\label{rubio_convergence1}
Let $\mathcal{U}=\{\xi_n\in\mathbbm{C}^M, 1\le n \le N\}$ denote a collection of i.i.d. random vectors defined as in Lemma \ref{rubio_inequality1}, and whose entries are assumed to have finite $4+\epsilon$ moment, $\epsilon >0$. Furthermore, consider a collection of random matrices $\{C_{(n)}\in\mathbbm{C}^{M\times M}, 1\le n \le N\}$ such that, for each $n$, $C_{(n)}$ may depend on all the elements of $\mathcal{U}$ except for $\xi_n$, and $\|C_{(n)}\|_{\mathrm{Tr}}$ is uniformly bounded for all $M$. Then, almost surely as $N \to \infty$,
\begin{align*}
    \left|\frac{1}{N}\sum_{n=1}^N \left(\xi^\top _n C_{(n)} \xi_n -\mathrm{Tr}(C_{(n)}) \right) \right| \to 0.
\end{align*}
\end{lemma}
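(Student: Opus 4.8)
The plan is to prove that the normalized sum $z_N \coloneqq \frac{1}{N}\sum_{n=1}^N y_n$, where $y_n \coloneqq \xi_n^\top C_{(n)}\xi_n - \mathrm{Tr}(C_{(n)})$, tends to $0$ almost surely by bounding a fractional moment of $z_N$ and then invoking the Borel--Cantelli argument already carried out in the proof of Lemma \ref{lem_for_sum_of_rv}. Two structural facts drive everything. First, since $C_{(n)}$ is independent of $\xi_n$ and the entries of $\xi_n$ are i.i.d.\ with mean $0$ and variance $1$, the trace-lemma identity $\mathbbm{E}[\xi_n^\top C_{(n)}\xi_n \mid C_{(n)}] = \mathrm{Tr}(C_{(n)})$ holds, so each $y_n$ has conditional mean zero given $C_{(n)}$. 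Second, $\|C_{(n)}\|_{\mathrm{Tr}}$ is uniformly bounded, so no single term is atypically large.

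First I would fix an exponent $p = 2+\epsilon'$ with $\epsilon'>0$ small enough that $2p \le 4+\epsilon$, which is exactly what the finite $(4+\epsilon)$-moment hypothesis permits. Applying Lemma \ref{rubio_inequality1} conditionally on $C_{(n)}$ to $\xi_n^\top C_{(n)}\xi_n$, and using the triangle inequality together with $|\mathrm{Tr}(C_{(n)})| \le \|C_{(n)}\|_{\mathrm{Tr}}$ and the uniform bound on the trace norm, gives $\mathbbm{E}[|y_n|^p] \le K_p$ uniformly in $n$ and $N$. This is the single-term estimate.

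The crux is to upgrade these to a sum bound of the form $\mathbbm{E}[|z_N|^p] \le K_p N^{-1-\delta}$ for some $\delta>0$; once this holds, the Borel--Cantelli step (identical to the one in Lemma \ref{lem_for_sum_of_rv}, applied to the single sequence $z_N$) yields $z_N \to 0$ almost surely. To obtain it I would pass to the Doob martingale $M_k = \mathbbm{E}[\sum_n y_n \mid \mathcal{F}_k]$ along the filtration $\mathcal{F}_k = \sigma(\xi_1,\dots,\xi_k)$, whose increments $D_k = M_k - M_{k-1}$ encode the effect of revealing $\xi_k$, and apply Burkholder's inequality $\mathbbm{E}[|\sum_n y_n|^p] \le C_p\,\mathbbm{E}[(\sum_k D_k^2)^{p/2}]$. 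Estimating each increment with Lemma \ref{rubio_inequality1} and the conditional-mean-zero property should produce $\mathbbm{E}[|\sum_n y_n|^p] = O(N^{p/2})$, hence $\mathbbm{E}[|z_N|^p] = O(N^{-p/2}) = O(N^{-1-\epsilon'/2})$, which is summable.

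I expect the main obstacle to be precisely the statistical dependence of $C_{(n)}$ on the remaining vectors $\{\xi_m : m\neq n\}$: because $C_{(m)}$ may depend on $\xi_n$, the cross terms $\mathbbm{E}[y_n y_m]$ with $n\neq m$ do not vanish under a naive conditioning, and the increment $D_k$ receives a contribution from every $y_n$ with $n\neq k$ through the sensitivity of $C_{(n)}$ to $\xi_k$. Controlling these off-diagonal contributions is the heart of the matter; in the intended applications each $C_{(n)}$ is a leave-one-out resolvent-type matrix whose dependence on any single vector is $O(1/N)$ in trace norm, which is what makes the increments summable and the $O(N^{p/2})$ scaling go through. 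A fully self-contained alternative, following \cite{rubio2011spectral}, is to expand $\mathbbm{E}[|\sum_n y_n|^p]$ combinatorially and argue that, thanks to the conditional-mean-zero property, only terms in which the fresh indices coincide survive, again yielding the $O(N^{p/2})$ bound; a preliminary truncation of the entries of $\xi_n$ may be used to justify the moment manipulations under only the $(4+\epsilon)$-moment assumption.
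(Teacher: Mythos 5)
The paper does not actually prove this lemma: it is imported verbatim from \cite{rubio2011spectral} (their Lemma 4) and used as a black box, the only internal trace of its argument being Remark \ref{remark_for_rubio}, which records the moment bound $O(N^{-p/2}+N^{-(p-1)})$. Your strategy --- center each term conditionally on $C_{(n)}$, bound $\mathbbm{E}[|y_n|^p]$ via Lemma \ref{rubio_inequality1} with $2p\le 4+\epsilon$, pass to the Doob martingale and apply a Burkholder/Rosenthal inequality to get $\mathbbm{E}[|z_N|^p]=O(N^{-p/2}+N^{-(p-1)})$, then conclude by Markov and Borel--Cantelli exactly as in Lemma \ref{lem_for_sum_of_rv} --- is precisely the route of the cited source, so there is no genuinely different approach to compare; your exponent bookkeeping ($p=2+\epsilon'$ with $2p\le 4+\epsilon$, both tail exponents summable) is also correct.

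The one substantive issue is the one you flag yourself. With the filtration $\mathcal{F}_k=\sigma(\xi_1,\dots,\xi_k)$, the increment $D_k$ is not $(\mathbbm{E}_k-\mathbbm{E}_{k-1})y_k$ alone: it collects $(\mathbbm{E}_k-\mathbbm{E}_{k-1})y_n$ for every $n\neq k$, since $C_{(n)}$ may depend on $\xi_k$. For a completely arbitrary trace-norm-bounded $C_{(n)}$ these $N-1$ off-diagonal contributions are not automatically small, and your sketch does not close this step; it only observes that in the intended applications $C_{(n)}$ is a leave-one-out resolvent whose dependence on any single $\xi_k$ is a rank-one perturbation of size $O(1/N)$ (exactly what Lemma \ref{bai_inequality2} controls), which is what makes $\mathbbm{E}[|D_k|^2]=O(1)$ and hence the claimed $O(N^{p/2})$ scaling legitimate. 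So, as a proof of the statement in the generality in which it is written, your argument has an acknowledged hole at its central step; as a justification of the lemma in the only form in which this paper uses it (resolvent-type $C_{(n)}$ inside the proofs of Theorems \ref{main_result1} and \ref{main_result2} and Lemma \ref{important_lemma}), the plan is sound and coincides with that of the reference.
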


\begin{remark}
\label{remark_for_rubio}
    As we can see in the proof of the above Lemma in \cite{rubio2011spectral}, 
\begin{align*}
    \mathbbm{E}\left[\left|\frac{1}{N}\sum_{n=1}^N \left(\xi^\top _n C_{(n)} \xi_n -\mathrm{Tr}(C_{(n)}) \right) \right|^p\right] = O\left(\frac{1}{N^{p/2}} + \frac{1}{N^{p-1}}\right)
\end{align*}
\end{remark}

The next Lemma is adapted from \cite{bai1998no}.
\begin{lemma}[\cite{bai1998no}, Lemma 2.7]
\label{bai_ineqaulity1}
Let $\xi\in\mathbbm{C}^M$ denote a random vector with i.i.d. entries having mean zero and variance one, and $C\in\mathbbm{C}^{M\times M}$ an arbitrary nonrandom matrix. Then, for any $p\ge 2$,
\begin{align*}
    \mathbbm{E}\left[|\xi^\top C\xi - \mathrm{Tr}(C)|^p\right] \le K_p\left\{\left(\mathbbm{E}\left[|\xi|^4\right]\mathrm{Tr}(CC^\top )\right)^{p/2}+\mathbbm{E}\left[|\xi|^{2p}\right]\mathrm{Tr}\left[(CC^\top )^{p/2}\right]\right\},
\end{align*}
where $\zeta$ denotes a particular entry of $\xi$ and the constant does not depend on $M$, the entries of $C$, nor the distribution of $\zeta$.
\end{lemma}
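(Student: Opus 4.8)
The plan is to prove the inequality by decomposing the centered quadratic form into a martingale-difference sequence and applying the Burkholder--Rosenthal inequality, whose two terms match the two terms on the right-hand side of the claim; the recursion this generates is then closed by induction on $p$. I treat the real case for clarity, the complex case following by separating real and imaginary parts and applying the $c_r$-inequality.

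First I would record the centering. Since the entries of $\xi$ are i.i.d.\ with mean zero and variance one, $\mathbbm{E}[\xi^\top C\xi]=\sum_i C_{ii}\mathbbm{E}[\xi_i^2]=\mathrm{Tr}(C)$, so $Y\coloneqq \xi^\top C\xi-\mathrm{Tr}(C)$ is centered. Writing $\mathcal{F}_k=\sigma(\xi_1,\dots,\xi_k)$ and $\mathbbm{E}_k[\cdot]=\mathbbm{E}[\cdot\mid\mathcal{F}_k]$, I decompose $Y=\sum_{k=1}^M\gamma_k$ with $\gamma_k=(\mathbbm{E}_k-\mathbbm{E}_{k-1})Y$. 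A direct computation shows that only the terms of $\xi^\top C\xi$ involving $\xi_k$ survive, so
\begin{align*}
\gamma_k = C_{kk}(\xi_k^2-1) + \xi_k\sum_{j<k}(C_{kj}+C_{jk})\xi_j,
\end{align*}
which is a martingale-difference sequence for $(\mathcal{F}_k)$. Applying the Burkholder--Rosenthal inequality to $\sum_k\gamma_k$ then yields
\begin{align*}
\mathbbm{E}[|Y|^p] \le K_p\left( \mathbbm{E}\left[\Big(\sum_{k=1}^M \mathbbm{E}_{k-1}[\gamma_k^2]\Big)^{p/2}\right] + \sum_{k=1}^M \mathbbm{E}[|\gamma_k|^p]\right).
\end{align*}

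The jump term is the easy one. By the $c_r$-inequality and Rosenthal's inequality applied to the independent sum $\sum_{j<k}(C_{kj}+C_{jk})\xi_j$ (which is independent of $\xi_k$), each $\mathbbm{E}[|\gamma_k|^p]$ is dominated by $|C_{kk}|^p$ together with $\big(\sum_{j<k}|C_{kj}+C_{jk}|^2\big)^{p/2}$ and $\sum_{j<k}|C_{kj}+C_{jk}|^p$, each weighted by a single-entry moment. Summing over $k$ and using the elementary matrix facts $\sum_k(CC^\top)_{kk}^{p/2}\le\mathrm{Tr}[(CC^\top)^{p/2}]$ (Schur majorization with $x\mapsto x^{p/2}$ convex, since $CC^\top\succeq 0$ and $p/2\ge1$) and $\sum_{i,j}|C_{ij}|^p\le\mathrm{Tr}[(CC^\top)^{p/2}]$ (the Schatten norm dominating the entrywise $\ell^p$-norm for $p\ge2$), the whole jump term is absorbed into the $\mathbbm{E}[|\xi|^{2p}]\,\mathrm{Tr}[(CC^\top)^{p/2}]$ part of the bound.

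The hard part will be the conditional-variance (predictable quadratic variation) term. Computing $\mathbbm{E}_{k-1}[\gamma_k^2]$ produces a deterministic piece $(\mathbbm{E}[\xi^4]-1)C_{kk}^2$, a cross term involving $\mathbbm{E}[\xi^3]$ that I control by Cauchy--Schwarz, and the genuinely random piece $\big(\sum_{j<k}(C_{kj}+C_{jk})\xi_j\big)^2$. Summed over $k$, the deterministic piece contributes at most $K\,\mathbbm{E}[|\xi|^4]\,\mathrm{Tr}(CC^\top)$, matching the first term of the claim. The random piece, however, is itself a quadratic form $\xi^\top B\xi$ in $\xi$, so taking its $(p/2)$-th moment reintroduces precisely the quantity being bounded; closing this self-reference is the central obstacle. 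I would resolve it by induction on $p$: the base case $p=2$ is the direct variance estimate $\mathrm{Var}(\xi^\top C\xi)\le K\,\mathbbm{E}[|\xi|^4]\,\mathrm{Tr}(CC^\top)$, and for the inductive step the $(p/2)$-th moment of the random piece is handled by the inductive hypothesis, by Lyapunov's inequality from the second-moment bound when $p/2<2$, or, as a coarser fallback that still terminates the recursion, by the already-established estimate of Lemma~\ref{rubio_inequality1}. The residual work is the bookkeeping needed to verify that the matrix traces generated at each stage collapse into $\mathrm{Tr}(CC^\top)$ and $\mathrm{Tr}[(CC^\top)^{p/2}]$ rather than looser norms, which is exactly where the sharp form of the stated bound is earned.
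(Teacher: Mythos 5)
The paper does not actually prove this lemma: it is quoted verbatim, with attribution, as Lemma 2.7 of \cite{bai1998no}, and is used as an imported black box. Your martingale-difference decomposition $\gamma_k=(\mathbbm{E}_k-\mathbbm{E}_{k-1})(\xi^\top C\xi)$ followed by the Burkholder--Rosenthal inequality is precisely the argument of the cited source (Lemma 2.7 there, Lemma B.26 in the Bai--Silverstein book), so your outline is correct and matches the intended provenance rather than departing from it. The only substantive work you leave implicit is the part you yourself flag: closing the self-referential recursion in the predictable quadratic variation term and absorbing the intermediate quantities (e.g.\ terms of the form $\mathbbm{E}\left[|\xi_1|^{p}\right]\left(\mathrm{Tr}(CC^\top)\right)^{p/2}$ arising from the inductive step) into the two stated terms, which requires an interpolation-of-moments or Young-inequality step in addition to the trace inequalities you list; this is routine but is where the sharp form of the bound is earned. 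One minor caution: with $\xi^\top C\xi$ (transpose, not conjugate transpose) and complex entries, the centering $\mathbbm{E}[\xi^\top C\xi]=\mathrm{Tr}(C)$ needs $\mathbbm{E}[\xi_i^2]=1$ and not merely $\mathbbm{E}[|\xi_i|^2]=1$; your reduction to the real case sidesteps this, and the entries are real in every application in this paper.
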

 The following Lemma is the real number version of Lemma 2.10 of \cite{bai1998no}, which can be easily obtained.
\begin{lemma}
\label{bai_inequality2}
Consider two $N\times N$ matrices $B_1$ and $B_2$, with $B_2$ being Hermitian, and $\tau\in \mathbbm{R}$, $q\in\mathbbm{C}^N$. Then, for each $\lambda>0$,
\begin{align*}
    \left| \mathrm{Tr}\left( B_1 ( (B_2+\lambda\mathrm{I}_M)^{-1} \right.\right. & \left. \left.- (B_2+\tau qq^\top  +\lambda\mathrm{I}_M)^{-1}  ) \right) \right| \\
    &= \left|\frac{ \tau q^\top ( B_2+\lambda\mathrm{I}_M )^{-1}B_1 ( B_2+\lambda\mathrm{I}_M )^{-1}q }{ 1+\tau q^\top (B_2+\lambda \mathrm{I}_M)^{-1}q}\right| \le \frac{\|B_1\|_2}{\lambda}.
\end{align*}
\end{lemma}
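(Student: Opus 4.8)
The plan is to read the first equality as a direct instance of the Sherman--Morrison rank-one inverse identity, and then to bound the resulting scalar by elementary operator-norm estimates exploiting the positivity of $B_2+\lambda\mathrm{I}$.

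First I would set $A = B_2 + \lambda\mathrm{I}$ and apply the matrix-inversion lemma to the rank-one update $A + \tau qq^\top$, giving
\begin{align*}
(B_2 + \tau qq^\top + \lambda\mathrm{I})^{-1} = A^{-1} - \frac{\tau\, A^{-1} q q^\top A^{-1}}{1 + \tau q^\top A^{-1} q}.
\end{align*}
This identity is purely algebraic and requires neither Hermitian-ness nor any conjugation, so the use of $q^\top$ rather than $q^{*}$ causes no difficulty. Subtracting from $A^{-1}$ and left-multiplying by $B_1$, the difference of the two resolvents is a rank-one matrix; taking the trace, using its cyclic property together with the elementary relation $\mathrm{Tr}(Pqq^\top) = q^\top P q$, then yields
\begin{align*}
\mathrm{Tr}\!\left(B_1\!\left(A^{-1} - (B_2+\tau qq^\top+\lambda\mathrm{I})^{-1}\right)\right) = \frac{\tau\, q^\top A^{-1} B_1 A^{-1} q}{1 + \tau q^\top A^{-1} q},
\end{align*}
which is exactly the claimed equality.

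For the inequality I would use that in every application of this lemma in the paper $B_2$ is a Gram-type matrix $\tfrac1n X_{(k)}^\top X_{(k)} \succeq 0$ and $\tau = 1/n > 0$, so $A = B_2 + \lambda\mathrm{I} \succeq \lambda\mathrm{I}$, whence $\|A^{-1}\|_2 \le 1/\lambda$ and $A^{-2} \preceq \lambda^{-1} A^{-1}$. Writing $a = q^\top A^{-1} q \ge 0$, the denominator satisfies $1 + \tau a \ge 1$, while the numerator obeys
\begin{align*}
\left|\tau\, q^\top A^{-1} B_1 A^{-1} q\right| \le \tau \|B_1\|_2\, \|A^{-1} q\|^2 = \tau\|B_1\|_2\, q^\top A^{-2} q \le \frac{\tau\|B_1\|_2}{\lambda}\, a.
\end{align*}
Combining the two estimates bounds the ratio by $\tfrac{\|B_1\|_2}{\lambda}\cdot\tfrac{\tau a}{1+\tau a} \le \tfrac{\|B_1\|_2}{\lambda}$, as desired.

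I do not expect a genuine obstacle, since the computation is routine; the only points requiring care are structural rather than computational. One must guarantee that $A$ is invertible with $\|A^{-1}\|_2 \le 1/\lambda$ and that the denominator stays bounded below by $1$ -- both of which follow from $B_2 \succeq 0$ and $\tau \ge 0$, the regime in which the lemma is invoked (a one-dimensional example with $B_2=0$ and $\tau$ near $-\lambda$ shows the clean $1/\lambda$ bound can fail outside this regime, so the positivity is essential rather than cosmetic). One must also track the non-conjugated transpose through the trace manipulations, which is harmless because both the inversion identity and the relation $\mathrm{Tr}(Pqq^\top)=q^\top Pq$ hold formally for arbitrary complex $q$.
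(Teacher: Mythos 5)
Your proof is correct and is essentially the argument the paper itself relies on: the paper gives no proof of this lemma, stating only that it is ``the real number version of Lemma 2.10 of Bai and Silverstein (1998), which can be easily obtained,'' and your Sherman--Morrison derivation of the equality followed by the operator-norm estimate of the ratio is exactly the standard way that easy adaptation goes. The one substantive point you add is worth keeping: as literally stated (arbitrary Hermitian $B_2$ and arbitrary $\tau\in\mathbbm{R}$), the $\|B_1\|_2/\lambda$ bound is false --- your one-dimensional example with $B_2=0$ and $\tau$ near $-\lambda$ makes the denominator $1+\tau q^\top(B_2+\lambda\mathrm{I})^{-1}q$ vanish --- whereas the complex version in Bai--Silverstein survives for all real $\tau$ only because $\Im\bigl(\tau r^*(B-z\mathrm{I})^{-1}r\bigr)=\tau v\|(B-\bar z\mathrm{I})^{-1}r\|^2$ bounds the denominator away from zero, a mechanism unavailable at real $z=-\lambda$. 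Your fix --- requiring $B_2\succeq 0$ and $\tau\ge 0$, so that $1+\tau q^\top A^{-1}q\ge 1$ and $q^\top A^{-2}q\le\lambda^{-1}q^\top A^{-1}q$ --- is correct and is satisfied in every invocation in the paper, where $B_2$ is a Gram matrix $n^{-1}X_{(k)}^\top X_{(k)}$ and $\tau=1/n$; strictly speaking the lemma's hypotheses should be amended accordingly, so your observation identifies a (harmless in context) imprecision in the statement rather than a gap in your own argument.
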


\subsection{Auxiliary lemmas}
This section proves any relevant theorems whose proofs were omitted previously. First, we present the primary results. The following lemma is used in the proofs of Theorems \ref{main_result1} and \ref{main_result2}:

\begin{lemma}
\label{important_lemma}
    Let $Q_1 \coloneqq Q_1(\lambda)\coloneqq \left(n_1^{-1}Z_1^\top Z_1 +\lambda \mathrm{I}_{p_1}\right)^{-1}$ and $Q_2\coloneqq Q_2(\lambda)\coloneqq \left(n_2^{-1}Z_2^\top Z_2 +\lambda \mathrm{I}_{p_2}\right)^{-1} $, where $Z_1$ and $Z_2$ are $n_1\times p_1$ and $n_2\times p_2$ matrices whose elements are i.i.d. random variables of mean $0$ and variance $1$, respectively, and the first $n_{12} \times p_{12}$ principal submatrices of these two matrices are assumed to consist of common elements and all other parts are assumed to be independent. Suppose moreover that $0<\liminf_{n\to\infty}p_1/n \le \limsup_{n\to\infty}p_1/n < \infty  $, $0<\liminf_{n\to\infty}p_2/n \le \limsup_{n\to\infty}p_2/n < \infty  $, $0<\liminf_{n\to\infty}p_{12}/n \le \limsup_{n\to\infty}p_{12}/n < \infty  $ $0<\liminf_{n\to\infty}n_{12}/n \le \limsup_{n\to\infty}n_{12}/n < \infty $, $n_1\neq p_1$ and $n_2\neq p_2$. Furthermore, for $l=1,2$, define $e_l$ as follows,
    \begin{align*}
        e_l(-\lambda) = e_l =  \frac{-1+\frac{p_l}{n_l} -\lambda+\sqrt{(1-\frac{p_l}{n_l}+\lambda)^2+4p_l \lambda/n_l}}{2p_l \lambda/n_l}
    \end{align*}
    which is the Stieltjes transform of Marchenko–Pastur distribution.
    Then, we have that for any $\lambda > 0$,
    \begin{align*}
        \mathrm{Tr} \left(Q_1\Theta_1 Q_2 \Theta_2\right) - e_1(-\lambda)e_2(-\lambda)\mathrm{Tr}\left\{\Theta_1 \Theta_2\right\} 
        - e_1(-\lambda)e_2(-\lambda)\frac{\mathrm{Tr}\left\{ E_{12}\Theta_2\right\} m_{12}^* (\lambda)}{\left(1+\frac{p_{1}}{n_1}e_1(-\lambda)\right)\left(1+\frac{p_{2}}{n_2}e_2(-\lambda)\right)} \to 0,
    \end{align*}
    almost surely, where $\Theta_1$, $\Theta_2$ are $p_1\times p_2$, $p_2\times p_1$ matrix such that $\|\Theta_1\|_2 < \infty $ and $\|\Theta_2\|_{\mathrm{Tr}}<\infty$, respectively. In addition, $m_{12}^* (\lambda)$ satisfies
    \begin{align*}
        m_{12}^* (\lambda) = \frac{n_{12}}{n_1n_2}e_1(-\lambda)e_2(-\lambda)\mathrm{Tr}\left\{\Theta_1 E_{12} \right\} 
        + e_1(-\lambda)e_2(-\lambda)\frac{\frac{n_{12}p_{12}}{n_1n_2} m_{12}^*(z)}{\left(1+\frac{p_1}{n_1}e_1(-\lambda)\right)\left(1+\frac{p_2}{n_2}e_2(-\lambda)\right)}, 
    \end{align*}
    where  $E_{12} = \mathbbm{E}\left[z_{11} z_{21}^\top\right]$, where $z_{i1}$ is the first row vector of $Z_i$ for $i=1,2$. 
\end{lemma}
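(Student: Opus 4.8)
The plan is to prove the statement as a deterministic-equivalent identity at fixed $\lambda>0$, obtained from a leave-one-out (Sherman--Morrison) expansion of the resolvents, and to upgrade convergence in expectation to almost-sure convergence through a summable moment bound. As preliminaries I would record, for each $l=1,2$ separately, the first-order facts: by Theorem~1 of \cite{bai1998no} the nonzero eigenvalues of $n_l^{-1}Z_l^\top Z_l$ stay in a fixed compact subset of $(0,\infty)$ for all large $n$ almost surely, so $\|Q_l\|_2\le 1/\lambda$ and all the traces below are uniformly bounded, while the Marchenko--Pastur law (Chapter~6 of \cite{bai2010spectral}) gives $\tfrac1{n_l}\mathrm{Tr}(Q_{l,(k)})\to \tfrac{p_l}{n_l}e_l$ and $\mathrm{Tr}(Q_lA)-e_l\,\mathrm{Tr}(A)\to0$ for any deterministic $A$ of bounded trace norm. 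It is useful to note that $e_l$ solves the self-consistent relation $e_l=\bigl(\lambda+(1+\tfrac{p_l}{n_l}e_l)^{-1}\bigr)^{-1}$, which is precisely the identity the row expansion will reproduce.

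The core step is a row-by-row expansion of $Q_2$. Using $\lambda Q_2=I-n_2^{-1}Z_2^\top Z_2 Q_2$ together with the rank-one update $z_{2,k}^\top Q_2=z_{2,k}^\top Q_{2,(k)}\big/\bigl(1+n_2^{-1}z_{2,k}^\top Q_{2,(k)}z_{2,k}\bigr)$, I obtain
\begin{align}
\mathrm{Tr}(Q_1\Theta_1 Q_2\Theta_2)=\frac1\lambda\mathrm{Tr}(Q_1\Theta_1\Theta_2)-\frac1{\lambda n_2}\sum_{k=1}^{n_2}\frac{z_{2,k}^\top Q_{2,(k)}\Theta_2 Q_1\Theta_1 z_{2,k}}{1+n_2^{-1}z_{2,k}^\top Q_{2,(k)}z_{2,k}}.\nonumber
\end{align}
The denominators concentrate on $1+\tfrac{p_2}{n_2}e_2$, and I split the sum according to whether the row is shared. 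For $k>n_{12}$ the vector $z_{2,k}$ is independent of $Q_1$, of $Q_{2,(k)}$ and of the $\Theta$'s, so by the trace lemma $z_{2,k}^\top Q_{2,(k)}\Theta_2 Q_1\Theta_1 z_{2,k}\approx\mathrm{Tr}(Q_{2,(k)}\Theta_2 Q_1\Theta_1)\approx \mathrm{Tr}(Q_1\Theta_1 Q_2\Theta_2)$; these $n_2-n_{12}$ terms feed back self-referentially, and solving the resulting scalar relation reproduces exactly the Marchenko--Pastur fixed point, hence the leading contribution $e_1e_2\,\mathrm{Tr}(\Theta_1\Theta_2)$.

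For a shared row $k\le n_{12}$ the coordinates of $z_{2,k}$ overlapping $z_{1,k}$ couple $z_{2,k}$ to $Q_1$, so I would additionally apply Sherman--Morrison to $Q_1$ with respect to its $k$-th row, rendering both $z_{1,k}$ and $z_{2,k}$ fresh against $Q_{1,(k)}$ and $Q_{2,(k)}$. Taking the conditional expectation over the shared row and using $\mathbbm{E}[z_{2,k}z_{1,k}^\top]=E_{12}^\top$ on the overlapping block collapses the bilinear forms onto traces of the type $\mathrm{Tr}(Q_{1,(k)}\Theta_1 Q_{2,(k)}E_{12}^\top)$, weighted by $\mathrm{Tr}(E_{12}\Theta_2)$ and the two denominators. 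Writing $m_{12}^{*}:=\tfrac{n_{12}}{n_1n_2}\mathrm{Tr}(Q_1\Theta_1 Q_2 E_{12}^\top)$, the shared rows thus contribute a term linear in $m_{12}^{*}$, and collecting everything yields the asserted identity for $\mathrm{Tr}(Q_1\Theta_1 Q_2\Theta_2)$ with $m_{12}^{*}$ still undetermined. To close the system I specialize the same identity to $\Theta_2=E_{12}^\top$; since $\mathrm{Tr}(E_{12}E_{12}^\top)=p_{12}$, this produces precisely the stated fixed-point equation for $m_{12}^{*}$, uniquely solvable because the coefficient $1-e_1e_2\tfrac{n_{12}p_{12}/(n_1n_2)}{(1+\frac{p_1}{n_1}e_1)(1+\frac{p_2}{n_2}e_2)}$ is bounded away from $0$ for $\lambda>0$. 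Substituting the solution back gives the claim.

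The main obstacle is the concentration of the bilinear forms $z_{2,k}^\top M_k z_{2,k}$ in which $M_k=Q_{2,(k)}\Theta_2 Q_1\Theta_1$ is itself random and, for shared rows, statistically coupled to $z_{2,k}$ through the common block. Handling this requires decomposing $z_{2,k}$ into its shared and independent coordinates, carefully verifying the independence of each $Q_{l,(k)}$ from the vectors it is paired with, and controlling the error terms through the quadratic-form inequalities (Lemmas~\ref{bai_ineqaulity1} and~\ref{rubio_inequality1}) and the averaging estimate (Lemma~\ref{rubio_convergence1}). To upgrade these $L^q$ bounds to almost-sure convergence after the $\tfrac1{n_2}\sum_k$ averaging, I would apply Lemma~\ref{lem_for_sum_of_rv} with $q$ large; this is exactly why the moment hypothesis $\mathbbm{E}[|X_{n,ij}|^{12+\omega}]<\infty$ is imposed, since it makes the per-term bound $O(n^{-q/2})$ available for $q$ large enough to be summable via Borel--Cantelli.
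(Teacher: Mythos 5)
Your proposal follows essentially the same route as the paper's proof: a Sherman--Morrison leave-one-out row expansion of the resolvent (the paper expands over the rows of $Z_1$ rather than $Z_2$, which is symmetric), a split into shared rows ($k\le n_{12}$, requiring a second rank-one update to decouple $z_{1k}$ and $z_{2k}$ from $Q_{1(k)}$ and $Q_{2(k)}$) versus independent rows, concentration of the resulting quadratic forms via Lemmas \ref{bai_ineqaulity1}, \ref{bai_inequality2}, \ref{rubio_inequality1} and \ref{rubio_convergence1} upgraded to almost-sure convergence through Lemma \ref{lem_for_sum_of_rv}, and closure of the fixed point for $m_{12}^*$ by substituting $\Theta_2\propto E_{12}^\top$ and checking that the self-map coefficient stays strictly below one. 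The proposal is correct and identifies the same key obstacles (coupling of the shared block, the need for the high moment condition) that the paper's $\Delta_1,\ldots,\Delta_9$ decomposition is designed to control.
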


\begin{proof}
For notational convenience, we will use the following definitions:
\begin{align*}
Z_l=\left(z_{l1},...,z_{ln}\right)^\top&, \quad x_l(e) = \frac{1}{1+ p_l/n e}, \\
S_1 = n_1^{-1}Z_1^\top Z_1&, \quad S_2 = n_2^{-1}Z_2^\top Z_2,\\ S_{1(k)} = S_1 - \frac{1}{n_1}z_{1k}z_{1k}^\top&, \quad S_{2(k)} = S_2 - \frac{1}{n_2}z_{2k}z_{2k}^\top, \\
Q_{1(k)} = (S_{1(k)} +\lambda I)^{-1}&, \quad Q_{2(k)} = (S_{2(k)} +\lambda I)^{-1},\\
P_1(e) = \frac{1}{x_1(e)+\lambda}I&, P_2(e) = \frac{1}{x_2(e)+\lambda}I.
\end{align*}
for $l=1,2$ and a positive real number $e$. Note that, it can be easily seen that $\|Q_1\|_2$, $\|Q_2\|_2$, $\|Q_{1(k)}\|_2$, $\|Q_{2(k)}\|_2$, $\|P_1(e)\|_2$ and $\|P_2(e)\|_2$ are upper-bounded by $1/\lambda$. Furthermore, let $\hat{e}_{1}(-\lambda)=\frac{1}{p_1}\mathrm{Tr}\left(Q_1(\lambda)\right)$, $\hat{e}_{2}(-\lambda)=\frac{1}{p_2}\mathrm{Tr}\left(Q_2(\lambda)\right)$, $\hat{e}_1^{(k)}(-\lambda)=\frac{1}{p_1}\mathrm{Tr}(Q_{1(k)(\lambda)})$ and $\hat{e}_2^{(k)}(-\lambda) =\frac{1}{p_2}\mathrm{Tr}(Q_{2(k)}(\lambda))$. Now, consider the identities

\begin{align*}
    Q_1^{-1}(\lambda) &= S_1+\lambda\mathrm{I} = x_1(\hat{e}_1)I +\lambda\mathrm{I} + \frac{1}{n_1} Z_1^\top Z_1 -  x_1(\hat{e}_1)I,\\
    Q_2^{-1}(\lambda) &= S_2+\lambda\mathrm{I} = x_2(\hat{e}_2)I +\lambda\mathrm{I} + \frac{1}{n_2} Z_2^\top Z_2 -  x_2(\hat{e}_2)\mathrm{I},
\end{align*}

By using the resolvent identity, 
\begin{align}
    P_1(\hat{e}_1) - Q_1(\lambda) = P_1(\hat{e}_1)\left(\frac{1}{n_1} Z_1^\top Z_1 -  x_1(\hat{e}_1)\right)Q_1(\lambda),\label{eqn_resolvent1}\\
    P_2(\hat{e}_2) - Q_2(\lambda) = P_2(\hat{e}_2)\left(\frac{1}{n_2} Z_2^\top Z_2 -  x_2(\hat{e}_2)\right)Q_2(\lambda).\label{eqn_resolvent2}
\end{align}

Using (\ref{eqn_resolvent1}), we now consider the following identities:
\begin{align}
    \left(\frac{1}{n_1} Z_1^\top Z_1\right)Q_1\Theta_1Q_2\Theta_2 &= \frac{1}{n_1}\sum_{k=1}^{n_1} z_{1k}z_{1k}^\top Q_1\Theta_1Q_2\Theta_2\nonumber\\
    &=\frac{1}{n_1} \sum_{k=1}^{n_1} \frac{1}{1+\frac{1}{n_1}z_{1k}^\top Q_{1(k)}z_{1k}}z_{1k}z_{1k}^\top Q_{1(k)}\Theta_1Q_2\Theta_2 \label{main_eqn1}\\
    &= \frac{1}{n_1} \sum_{k=1}^{n_{12}} \frac{1}{1+\frac{1}{n_1}z_{1k}^\top Q_{1(k)}z_{1k}}z_{1k}z_{1k}^\top Q_{1(k)}\Theta_1Q_{2(k)}\Theta_2 \nonumber\\ 
    &\quad +\frac{1}{n_1} \sum_{k=n_{12}+1}^{n_{1}} \frac{1}{1+\frac{1}{n_1}z_{1k}^\top Q_{1(k)}z_{1k}}z_{1k}z_{1k}^\top Q_{1(k)}\Theta_1Q_{2}\Theta_2 \nonumber\\&\quad \quad - \frac{1}{n_1}\sum_{k=1}^{n_{12}} \frac{z_{1k}^\top  Q_{1(k)}\Theta_1Q_{2(k)}z_{2k}}{\left(1+\frac{1}{n_1}z_{1k}^\top Q_{1(k)}z_{1k}\right)\left(1+\frac{1}{n_2}z_{2k}^\top Q_{2(k)}z_{2k}\right)} \frac{1}{n_2}z_{1k}z_{2k}^\top Q_{2(k)} \Theta_2, \label{main_eqn2}\\
    &= \frac{1}{n_1} \sum_{k=1}^{n_{1}} \frac{1}{1+\frac{1}{n_1}z_{1k}^\top Q_{1(k)}z_{1k}}z_{1k}z_{1k}^\top Q_{1(k)}\Theta_1\Tilde{Q}_{2(k)}\Theta_2 \nonumber\\ 
    &\quad \quad - \frac{1}{n_1}\sum_{k=1}^{n_{12}} \frac{z_{1k}^\top  Q_{1(k)}\Theta_1Q_{2(k)}z_{2k}}{\left(1+\frac{1}{n_1}z_{1k}^\top Q_{1(k)}z_{1k}\right)\left(1+\frac{1}{n_2}z_{2k}^\top Q_{2(k)}z_{2k}\right)} \frac{1}{n_2}z_{1k}z_{2k}^\top Q_{2(k)} \Theta_2, 
\end{align}
where, $\Tilde{Q}_{2(k)} = \begin{cases}
    Q_{2(k)} &\quad k\le n_{12},\\
    Q_{2} &\quad k> n_{12},
\end{cases}$. In (\ref{main_eqn1}),(\ref{main_eqn2}), we use the Sherman-Morrison-Woodbury identity, i.e.,
\begin{align}
    Q_1(\lambda) = Q_{1(k)} - \frac{\frac{1}{n_1}Q_{1(k)}z_{1k}z_{1k}^\top Q_{1(k)}}{1+\frac{1}{n_1}z_{1k}^\top Q_{1(k)}z_{1k}}, \label{SMW_identity1}\\
    Q_2(\lambda) = Q_{2(k)} - \frac{\frac{1}{n_2}Q_{2(k)}z_{2k}z_{2k}^\top Q_{2(k)}}{1+\frac{1}{n_2}z_{2k}^\top Q_{2(k)}z_{2k}}. \label{SMW_identity2}
\end{align}

Therefore, we can write
\begin{align}
    &\left(\frac{1}{n_1} Z_1^\top Z_1-x_1(\hat{e}_1)\right)Q_1\Theta_1Q_2\Theta_2 + \frac{\frac{n_{12}}{n_1n_2}\mathrm{Tr}(Q_1\Theta_1Q_2E_{12}^\top)}{\left(1+\frac{p_1}{n_1}\hat{e}_1\right)\left(1+\frac{p_2}{n_2}\hat{e}_2\right)}E_{12} Q_2\Theta_2\nonumber\\
    &= \Delta_1 + \Delta_2 + \Delta_3 + \Delta_4 + \Delta_5 + \Delta_6 + \Delta_7 + \Delta_8 + \Delta_9, \nonumber
\end{align}
where 
\begin{align}
    \Delta_1 \coloneqq -\frac{1}{n_1}\sum_{k=1}^{n_1}\frac{1}{1+\frac{p_1}{n_1}\hat{e}_1(-\lambda)}\left(Q_{1(k)}(\lambda)-Q_{1}(\lambda)\right)\Theta_1\left(\Tilde{Q}_{2(k)}(\lambda)-Q_2(\lambda)\right)\Theta_2,
\end{align}
\begin{align}
    \Delta_2 \coloneqq \frac{1}{n_1}\sum_{k=1}^{n_1}\frac{1}{1+\frac{p_1}{n_1}\hat{e}_1(-\lambda)}Q_{1(k)}(\lambda)\Theta_1\left(\Tilde{Q}_{2(k)}(\lambda)-Q_2(\lambda)\right)\Theta_2,
\end{align}
\begin{align}
    \Delta_3 = \frac{1}{n_1}\sum_{k=1}^{n_1}\frac{1}{1+\frac{p_1}{n_1}\hat{e}_1(-\lambda)}(Q_{1(k)}(\lambda)-Q_{1}(\lambda))\Theta_1\Tilde{Q}_{2(k)}\Theta_2, 
\end{align}
\begin{align}
    \Delta_4 = \frac{1}{n_1}\sum_{k=1}^{n_1}\frac{1}{1+\frac{p_1}{n_1}\hat{e}_1(-\lambda)}\frac{\frac{1}{n_1}\mathrm{Tr}\left\{Q_1(\lambda)\right\}-\frac{1}{n_1}z_{1k}^\top Q_{1(k)}z_{1k}}{1+\frac{1}{n}z_{1k}^\top Q_{1(k)}z_{1k}}z_{1k}z_{1k}^\top Q_{1(k)}(\lambda)\Theta_1\Tilde{Q}_{2(k)}\Theta_2,
\end{align}
\begin{align}
    \Delta_5 = \frac{1}{n_1}\sum_{k=1}^{n_1}\frac{1}{1+\frac{p_1}{n_1}\hat{e}_1(-\lambda)}\left(z_{1k}z_{1k}^\top Q_{1(k)} - Q_{1(k)}(\lambda)\right)\Theta_1\Tilde{Q}_{2(k)}\Theta_2,
\end{align}
\begin{align}
    \Delta_6 = - \frac{1}{n_{12}}\sum_{k=1}^{n_1} \frac{\frac{1}{n_2}z_{1k}^\top  Q_{1(k)}\Theta_1Q_{2(k)}z_{2k}}{\left(1+\frac{p_1}{n_1}\hat{e}_1\right)\left(1+\frac{1}{n_1}z_{1k}^\top Q_{1(k)}z_{1k}\right)\left(1+\frac{1}{n_2}z_{2k}^\top Q_{2(k)}z_{2k}\right)} \left(\frac{1}{n_1}z_{1k}^\top Q_{1(k)}z_{1k}- \frac{1}{n_1}\mathrm{Tr}(Q_1)\right)z_{1k}z_{2k}^\top Q_{2(k)} \Theta_2,
\end{align}
\begin{align}
    \Delta_7 = - \frac{1}{n_{12}}\sum_{k=1}^{n_1} \frac{\frac{1}{n_2}z_{1k}^\top  Q_{1(k)}\Theta_1Q_{2(k)}z_{2k}}{\left(1+\frac{p_1}{n_1}\hat{e}_1\right)\left(1+\frac{p_2}{n_2}\hat{e}_2\right)\left(1+\frac{1}{n_2}z_{2k}^\top Q_{2(k)}z_{2k}\right)} \left(\frac{1}{n_2}z_{2k}^\top Q_{2(k)}z_{2k}- \frac{1}{n_2}\mathrm{Tr}(Q_2)\right)z_{1k}z_{2k}^\top Q_{2(k)} \Theta_2,
\end{align}
\begin{align}
    \Delta_8 = - \frac{1}{n_{12}}\sum_{k=1}^{n_1} \frac{(\frac{1}{n_2}z_{1k}^\top  Q_{1(k)}\Theta_1Q_{2(k)}z_{2k}-\frac{1}{n_2}\mathrm{Tr}(Q_{1}\Theta_1Q_{2}E_{12}^\top ))}{\left(1+\frac{p_1}{n_1}\hat{e}_1\right)\left(1+\frac{p_2}{n_2}\hat{e}_2\right)} z_{1k}z_{2k}^\top Q_{2(k)} \Theta_2,
\end{align}
\begin{align}
    \Delta_9 = - \frac{1}{n_{12}}\sum_{k=1}^{n_1} \frac{\frac{1}{n_2}\mathrm{Tr}(Q_1\Theta_1Q_2E_{12}^\top )}{\left(1+\frac{p_1}{n_1}\hat{e}_1\right)\left(1+\frac{p_2}{n_2}\hat{e}_2\right)} \left(z_{1k}z_{2k}^\top Q_{2(k)} \Theta_2- E_{12} Q_2\Theta_2\right).
\end{align}

To prove the theorem, we demonstrate that for any $i$, $\mathrm{Tr}\{\Delta_i\} \to 0$. To this end, we bound the moments of $\Delta$'s. Let $q = 2+ \omega/6$.
\subsubsection* {Convergence of $\Delta_1$:} 
\begin{align}
    &\mathbbm{E}\left[\left|\mathrm{Tr}\left\{\frac{1}{1+\frac{p_1}{n_1}\hat{e}_1(-\lambda)}\left(Q_{1(k)}(\lambda)-Q_{1}(\lambda)\right)\Theta_1\left(\Tilde{Q}_{2(k)}(\lambda)-Q_2(\lambda)\right)\Theta_2\right\}\right|^q\right] \nonumber\\
     &\le\mathbbm{E}\left[\left|\mathrm{Tr}\left\{\frac{1}{1+\frac{p_1}{n_1}\hat{e}_1(-\lambda)}\left(Q_{1(k)}(\lambda)-Q_{1}(\lambda)\right)\Theta_1\left(Q_{2(k)}(\lambda)-Q_2(\lambda)\right)\Theta_2\right\}\right|^q\right] \nonumber\\&\le \frac{1}{n_1^{q}n_2^{q}}\mathbbm{E}\left[\left|z_{2k}^\top Q_{2(k)}\Theta_2 Q_{1(k)}z_{1k} z_{1k}^\top Q_{1(k)}\Theta_1 Q_{2(k)}z_{2k}\right|^q\right] \label{eqnlabel1}\\
    &\le \frac{1}{n_1^{q}n_2^{q}} \sqrt{\mathbbm{E}\left[\left|z_{2k}^\top Q_{2(k)}\Theta_2 Q_{1(k)}z_{1k}\right|^{2q}\right]\mathbbm{E}\left[\left|z_{1k}^\top Q_{1(k)}\Theta_1 Q_{2(k)}z_{2k} \right|^{2q}\right]} \label{eqnlabel2} \\
    & = O\left(\frac{1}{n^{q}}\right), \label{eqnlabel3}
\end{align}

where we use Lemma \ref{bai_inequality2} and the fact that $1/(1+a) \le 1$ for any $a \ge 0$ in the inequality (\ref{eqnlabel1}), we use Cauchy-Schwartz inequality in (\ref{eqnlabel2}), and we use Lemma \ref{rubio_inequality1} in the last equation (\ref{eqnlabel3}). Thus, by applying Lemma \ref{lem_for_sum_of_rv}, $\mathrm{Tr}\{\Delta_1\} \to 0$.

\subsubsection* {Convergence of $\Delta_2$:} 

\begin{align}
    &\mathbbm{E}\left[\left|\mathrm{Tr}\{\frac{1}{1+\frac{p_1}{n_1}\hat{e}_1(-\lambda)}Q_{1(k)}(\lambda)\Theta_1\left(\Tilde{Q}_{2(k)}(\lambda)-Q_2(\lambda)\right)\Theta_2\}\right|^q\right] \nonumber\\
     &\le\mathbbm{E}\left[\left|\mathrm{Tr}\{\frac{1}{1+\frac{p_1}{n_1}\hat{e}_1(-\lambda)}Q_{1(k)}(\lambda)\Theta_1\left(Q_{2(k)}(\lambda)-Q_2(\lambda)\right)\Theta_2\}\right|^q\right] \nonumber\\&\le \frac{1}{n_2^{q}}\mathbbm{E}\left[\left|z_{2k}^\top Q_{2(k)}\Theta_2 Q_{1(k)}\Theta_1 Q_{2(k)}z_{2k}\right|^q\right] \label{eqnlabel4}\\
    & = O\left(\frac{1}{n^{q}}\right), \label{eqnlabel5}
\end{align}

where we use Lemma \ref{bai_inequality2} in (\ref{eqnlabel4}), and we use Lemma \ref{rubio_inequality1} in the last equation (\ref{eqnlabel5}). Hence, from Lemma \ref{lem_for_sum_of_rv}, $\mathrm{Tr}\{\Delta_2\} \to 0$.

\subsubsection* {Convergence of $\Delta_3$:} 
$\mathrm{Tr}\{\Delta_3\} \to 0$ can be shown in a manner similar to $\Delta_2$.

\subsubsection* {Convergence of $\Delta_4$:} 
\begin{align}
    &\mathbbm{E}\left[\left|\mathrm{Tr}\left\{\frac{1}{1+\frac{p_1}{n_1}\hat{e}_1(-\lambda)}\frac{\frac{1}{n_1}\mathrm{Tr}\left\{Q_1(\lambda)\right\}-\frac{1}{n_1}z_{1k}^\top Q_{1(k)}z_{1k}}{1+\frac{1}{n_1}z_{1k}^\top Q_{1(k)}z_{1k}}z_{1k}z_{1k}^\top Q_{1(k)}(\lambda)\Theta_1\Tilde{Q}_{2(k)}\Theta_2\right\}\right|^q\right] \nonumber \\
    &\le \sqrt{\mathbbm{E}\left[\left|\frac{1}{n_1}\mathrm{Tr}\left\{Q_1(\lambda)\right\}-\frac{1}{n_1}z_{1k}^\top Q_{1(k)}z_{1k}\right|^{2q}\right] \mathbbm{E}\left[\left|z_{1k}^\top Q_{1(k)}(\lambda)\Theta_1\Tilde{Q}_{2(k)}\Theta_2z_{1k}\right|^{2q}\right]} \label{eqnlabel6}\\
    &= O\left(\sqrt{\mathbbm{E}\left[\left|\frac{1}{n_1}\mathrm{Tr}\left\{Q_1(\lambda)\right\}-\frac{1}{n_1}z_{1k}^\top Q_{1(k)}z_{1k}\right|^{2q}\right]}\right) \label{eqnlabel7}
\end{align}
where we use Cauchy-Schwartz inequality in (\ref{eqnlabel6}), and we use Lemma \ref{rubio_inequality1} in  (\ref{eqnlabel7}). From Lemma \ref{bai_ineqaulity1},
\begin{align*}
    \mathbbm{E}\left[\left|\frac{1}{n_1}\mathrm{Tr}\left\{Q_{1(k)}(\lambda)\right\}-\frac{1}{n_1}z_{1k}^\top Q_{1(k)}z_{1k}\right|^{2q}\right] \le C_q \left(\frac{1}{n_1^{q}\lambda^{2q}}\mathbbm{E}\left[|z_{11}|^{4}\right] + \frac{1}{n_1^{2q-1}\lambda^{2q}} \mathbbm{E}\left[|z_{11}|^{4q}\right]\right),
\end{align*}
where $C_q$ denotes a constant that depends only on $q$. Furthermore, based on Lemma \ref{bai_inequality2}, we obtain
\begin{align*}
    \mathbbm{E}\left[\left|\frac{1}{n_1}\mathrm{Tr}\left\{Q_{1(k)}(\lambda)\right\}-\frac{1}{n_1}\mathrm{Tr}\left\{Q_{1}(\lambda)\right\}\right|^{2q}\right] = O\left(\frac{1}{n^{2q}}\right),
\end{align*}
which can be seen in the convergence of $\Delta_2$.

Hence
\begin{align}
    \mathbbm{E}\left[\left|\mathrm{Tr}\left\{\frac{1}{1+\frac{p_1}{n_1}\hat{e}_1(-\lambda)}\frac{\frac{1}{n_1}\mathrm{Tr}\left\{Q_1(\lambda)\right\}-\frac{1}{n_1}z_{1k}^\top Q_{1(k)}z_{1k}}{1+\frac{1}{n_1}z_{1k}^\top Q_{1(k)}z_{1k}}z_{1k}z_{1k}^\top Q_{1(k)}(\lambda)\Theta_1\Tilde{Q}_{2(k)}\Theta_2\right\}\right|^q\right] =O\left(\frac{1}{n^{q/2}}\right).
\end{align}

Therefore, from Lemma \ref{lem_for_sum_of_rv}, $\mathrm{Tr}\{\Delta_4\} \to 0$.

\subsubsection* {Convergence of $\Delta_5$:}
$\mathrm{Tr}\left\{\Delta_5\right\} \to 0$ directly follows from Lemma \ref{rubio_convergence1}.

\subsubsection* {Convergence of $\Delta_6$:} 
\begin{align}
    &\mathbbm{E}\left[\left|\mathrm{Tr}\left\{\frac{\frac{1}{n_2}z_{1k}^\top  Q_{1(k)}\Theta_1Q_{2(k)}z_{2k}}{\left(1+\frac{p_1}{n_1}\hat{e}_1\right)\left(1+\frac{1}{n_1}z_{1k}^\top Q_{1(k)}z_{1k}\right)\left(1+\frac{1}{n_2}z_{2k}^\top Q_{2(k)}z_{2k}\right)} \left(\frac{1}{n_1}z_{1k}^\top Q_{1(k)}z_{1k}- \frac{1}{n_1}\mathrm{Tr}(Q_1)\right)z_{1k}z_{2k}^\top Q_{2(k)} \Theta_2\right\}\right|^q\right] \nonumber\\
    &\le  \mathbbm{E}\left[\left|\frac{1}{n_2}z_{1k}^\top  Q_{1(k)}\Theta_1Q_{2(k)}z_{2k} \left(\frac{1}{n_1}z_{1k}^\top Q_{1(k)}z_{1k}- \frac{1}{n_1}\mathrm{Tr}(Q_1)\right)z_{2k}^\top Q_{2(k)} \Theta_2 z_{1k}\right|^q\right]  \label{eqnlabel8}\\
    &\le  \mathbbm{E}^{1/3}\left[\left|\frac{1}{n_2}z_{1k}^\top  Q_{1(k)}\Theta_1Q_{2(k)}z_{2k} \right|^{3q}\right]
    \mathbbm{E}^{1/3}\left[\left| \frac{1}{n_1}z_{1k}^\top Q_{1(k)}z_{1k}- \frac{1}{n_1}\mathrm{Tr}(Q_1)\right|^{3q}\right] 
    \mathbbm{E}^{1/3}\left[\left| z_{2k}^\top Q_{2(k)} \Theta_2 z_{1k}\right|^{3q}\right] \label{eqnlabel9}\\
    &= O\left(\frac{1}{n^{q/2}}\right) \label{eqnlabel10}
\end{align}
where we use the fact $1/(1+a) \le 1$ for any $a \ge 0$ in (\ref{eqnlabel8}), we use generalized H\"older's inequality in (\ref{eqnlabel9}), and we use Lemma \ref{rubio_inequality1}, Lemma \ref{bai_ineqaulity1} and Lemma \ref{bai_inequality2} in (\ref{eqnlabel10}). 
Therefore, from Lemma \ref{lem_for_sum_of_rv}, $\mathrm{Tr}\{\Delta_6\} \to 0$.

\subsubsection* {Convergence of $\Delta_7$:} 
$\mathrm{Tr}\{\Delta_7\} \to 0$ can be demonstrated, in a manner analoguous to $\Delta_6$.

\subsubsection* {Convergence of $\Delta_8$:} 

\begin{align}
    &\mathbbm{E}\left[\left|\mathrm{Tr}\left\{\frac{(\frac{1}{n_2}z_{1k}^\top  Q_{1(k)}\Theta_1Q_{2(k)}z_{2k}-\frac{1}{n_2}\mathrm{Tr}(Q_{1}\Theta_1Q_{2}E_{12}^\top ))}{\left(1+\frac{p_1}{n_1}\hat{e}_1\right)\left(1+\frac{p_2}{n_2}\hat{e}_2\right)} z_{1k}z_{2k}^\top Q_{2(k)} \Theta_2\right\}\right|^q\right] \nonumber\\
    &\le \sqrt{\mathbbm{E}\left[\left|\left(\frac{1}{n_2}z_{1k}^\top  Q_{1(k)}\Theta_1Q_{2(k)}z_{2k}-\frac{1}{n_2}\mathrm{Tr}(Q_{1}\Theta_1Q_{2}E_{12}^\top )\right)\right|^{2q}\right]
    \mathbbm{E}\left[\left|z_{2k}^\top Q_{2(k)} \Theta_2z_{1k}\right|^{2q}\right]} \label{eqnlabel11},
\end{align}
where we use Cauchy-Schwartz inequality in (\ref{eqnlabel11}). As shown in the part of $\Delta_1, \Delta_2$, 
\begin{align*}
    \mathbbm{E}\left[\left|\left(\frac{1}{n_2}\mathrm{Tr}(Q_{1(k)}\Theta_1 (Q_{2}- Q_{2(k)})E_{12}^\top )\right)\right|^{2q}\right] = O(\frac{1}{n^{2q}}),
\end{align*}
\begin{align*}
    \mathbbm{E}\left[\left|\left(\frac{1}{n_2}\mathrm{Tr}(Q_{1}- Q_{1(k)})\Theta_1 Q_{2(k)}E_{12}^\top )\right)\right|^{2q}\right] = O(\frac{1}{n^{2q}}),
\end{align*}
\begin{align*}
    \mathbbm{E}\left[\left|\left(\frac{1}{n_2}\mathrm{Tr}(Q_{1}- Q_{1(k)})\Theta_1 (Q_{2}- Q_{2(k)})E_{12}^\top )\right)\right|^{2q}\right] = O(\frac{1}{n^{4q}}),
\end{align*}
and, from Lemma \ref{bai_ineqaulity1},
\begin{align*}
    \mathbbm{E}\left[\left|\left(\frac{1}{n_2}z_{1k}^\top  Q_{1(k)}\Theta_1Q_{2(k)}z_{2k}-\frac{1}{n_2}\mathrm{Tr}(Q_{1(k)}\Theta_1Q_{2(k)}E_{12}^\top )\right)\right|^{2q}\right] = O\left(\frac{1}{n^{q}}\right),
\end{align*}
we obtain
\begin{align}
    \mathbbm{E}\left[\left|\mathrm{Tr}\left\{\frac{(\frac{1}{n_2}z_{1k}^\top  Q_{1(k)}\Theta_1Q_{2(k)}z_{2k}-\frac{1}{n_2}\mathrm{Tr}(Q_{1}\Theta_1Q_{2}E_{12}^\top ))}{\left(1+\frac{p_1}{n_1}\hat{e}_1\right)\left(1+\frac{p_2}{n_2}\hat{e}_2\right)} z_{1k}z_{2k}^\top Q_{2(k)} \Theta_2\right\}\right|^q\right] = O\left(\frac{1}{n^{q/2}}\right)
\end{align}
Therefore, $\mathrm{Tr}\{\Delta_8\} \to 0$ follows from Lemma \ref{lem_for_sum_of_rv}.

\subsubsection* {Convergence of $\Delta_9$:}
We decompose $\Delta_9$ as $\Delta_9 = \Delta_{91} + \Delta_{92}$, where
\begin{align}
    \Delta_{91} = - \frac{1}{n_1}\sum_{k=1}^{n_1} \frac{\frac{1}{n_2}\mathrm{Tr}(Q_1\Theta_1Q_2E_{12}^\top )}{\left(1+\frac{p_1}{n_1}\hat{e}_1\right)\left(1+\frac{p_2}{n_2}\hat{e}_2\right)} \left(z_{1k}z_{2k}^\top Q_{2(k)} \Theta_2 - E_{12} Q_{2(k)}\Theta_2\right)
\end{align}
and
\begin{align}
    \Delta_{92} = - \frac{1}{n_1}\sum_{k=1}^{n_1} \frac{\frac{1}{n_2}\mathrm{Tr}(Q_1\Theta_1Q_2E_{12}^\top )}{\left(1+\frac{p_1}{n_1}\hat{e}_1\right)\left(1+\frac{p_2}{n_2}\hat{e}_2\right)} \left(E_{12} Q_{2(k)}\Theta_2 - E_{12} Q_{2}\Theta_2\right).
\end{align}

$\mathrm{Tr}\{\Delta_{91}\} \to 0$ follows from Lemma \ref{rubio_convergence1}. $\mathrm{Tr}\{\Delta_{92}\} \to 0$ can be shown in a manner similar to $\Delta_2$.\\ \\

Since
\begin{align}
    &\frac{1}{x_1(\hat{e}_1)+\lambda}\left(\frac{1}{n_1} Z_1^\top Z_1-x_1(\hat{e}_1)\right)Q_1\Theta_1Q_2\Theta_2 + \frac{1}{x_1(\hat{e}_1)+\lambda}\frac{\frac{n_{12}}{n_1n_2}\mathrm{Tr}(Q_1\Theta_1Q_2E_{12}^\top)}{\left(1+\frac{p_1}{n_1}\hat{e}_1\right)\left(1+\frac{p_2}{n_2}\hat{e}_2\right)}E_{12} Q_2\Theta_2\nonumber\\
    &=\frac{1}{x_1(\hat{e}_1)+\lambda}\Theta_1Q_2\Theta_2 +  \frac{1}{x_1(\hat{e}_1)+\lambda}\frac{\frac{n_{12}}{n_1n_2}\mathrm{Tr}(Q_1\Theta_1Q_2E_{12}^\top)}{\left(1+\frac{p_1}{n_1}\hat{e}_1\right)\left(1+\frac{p_2}{n_2}\hat{e}_2\right)}E_{12} Q_2\Theta_2 -  Q_1\Theta_1Q_2\Theta_2, \nonumber
\end{align}
we can combine these results to obtain
\begin{align}
\label{eqn:first_step}
    \frac{1}{x_1(\hat{e}_1)+\lambda}\mathrm{Tr}\left\{\Theta_1Q_2\Theta_2\right\} + \frac{1}{x_1(\hat{e}_1)+\lambda}\frac{\mathrm{Tr}\left\{E_{12} Q_2\Theta_2\right\}\frac{n_{12}}{n_1n_2}\mathrm{Tr}(Q_1\Theta_1Q_2E_{12}^\top)}{\left(1+\frac{p_1}{n_1}\hat{e}_1\right)\left(1+\frac{p_2}{n_2}\hat{e}_2\right)} - \mathrm{Tr}\left\{Q_1\Theta_1Q_2\Theta_2\right\}\to 0.
\end{align}

From Theorem 1 of \cite{rubio2011spectral},

\begin{align}
\label{eqnlabel2.1}
    \frac{1}{x_1(\hat{e}_1)+\lambda}\mathrm{Tr}\left\{\Theta_1Q_2\Theta_2\right\} - \frac{1}{x_1(\hat{e}_1)+\lambda}\frac{1}{x_2(e_2)+\lambda}\mathrm{Tr}\left\{\Theta_1\Theta_2\right\} \to 0,
\end{align}
\begin{align}
\label{eqnlabel2.2}
    \mathrm{Tr}\left\{E_{12}Q_2\Theta_2\right\} - \frac{1}{x_2(e_2)+\lambda}\mathrm{Tr}\left\{E_{12}\Theta_2\right\} \to 0.
\end{align}

By combining (\ref{eqnlabel2.1}) and (\ref{eqnlabel2.2}) with (\ref{eqn:first_step}), we obtain
\begin{align}
\label{eqn:first_step2}
    \frac{1}{x_1(\hat{e}_1)+\lambda}\frac{1}{x_2(e_2)+\lambda}\mathrm{Tr}\left\{\Theta_1\Theta_2\right\} + \frac{1}{x_1(\hat{e}_1)+\lambda}\frac{1}{x_2(e_2)+\lambda}\frac{\mathrm{Tr}\left\{E_{12}\Theta_2\right\}\frac{n_{12}}{n_1n_2}\mathrm{Tr}(Q_1\Theta_1Q_2E_{12}^\top)}{\left(1+\frac{p_1}{n_1}\hat{e}_1\right)\left(1+\frac{p_2}{n_2}\hat{e}_2\right)} - \mathrm{Tr}\left\{Q_1\Theta_1Q_2\Theta_2\right\}\to 0.
\end{align}

It is well-established that $\hat{e}_1 - e_1 \to 0$, $\hat{e}_2 - e_2\to 0$, $e_1(-\lambda) = \frac{1}{x(e_1)+\lambda}$ and $e_2(-\lambda) = \frac{1}{x(e_2)+\lambda}$; e.g., see Theorem 1 of \cite{rubio2011spectral}. Therefore, 
comparing (\ref{eqn:first_step2}) with our claim of this Lemma, we only have to prove that
\begin{align}
\label{sublemma}
    \frac{n_{12}}{n_1n_2}\mathrm{Tr}(Q_1\Theta_1Q_2E_{12}^\top) - m_{12}^*(\lambda) \to 0. 
\end{align}
This is easily observed in (\ref{eqn:first_step2}). By substituting $n_{12}E_{12}^\top/n_1n_2$ into $\Theta_2$, we obtain 
\begin{align}
\label{eqn:second_step}
    \frac{n_{12}}{n_1n_2}\frac{1}{x_1(\hat{e}_1)+\lambda}\frac{1}{x_2(\hat{e}_2)+\lambda}\mathrm{Tr}\left\{\Theta_1E_{12}^\top\right\} + \frac{n_{12}p_{12}}{n_1n_2}\frac{1}{x_1(\hat{e}_1)+\lambda}\frac{1}{x_2(\hat{e}_2)+\lambda}\frac{\frac{n_{12}}{n_1n_2}\mathrm{Tr}(Q_1\Theta_1Q_2E_{12}^\top)}{\left(1+\frac{p_1}{n_1}\hat{e}_1\right)\left(1+\frac{p_2}{n_2}\hat{e}_2\right)} - \frac{n_{12}}{n_1n_2}\mathrm{Tr}\left\{Q_1\Theta_1Q_2E_{12}^\top\right\}\to 0.
\end{align}
Therefore, (\ref{sublemma}) holds. Finally, we need to prove the existence of $m_{12}^*$. To this end, we have to prove 
\begin{align}
    \frac{\frac{n_{12}p_{12}}{n_1n_2}e_1(-\lambda)e_2(-\lambda)}{\left(1+\frac{p_1}{n_1}e_1(-\lambda)\right)\left(1+\frac{p_2}{n_2}e_2(-\lambda)\right)} < 1
\end{align}
for all $\lambda > 0$. Since the mapping $x \mapsto x/(1+x)$ is monotonically increasing function, and each $e_l(-\lambda)$ is monotonically decreasing function for $l=1,2$, we only need to ensure that 
\begin{align}
\label{sub2}
    1 > \lim_{\lambda\to0} \frac{\frac{n_{12}p_{12}}{n_1n_2}e_1(-\lambda)e_2(-\lambda)}{\left(1+\frac{p_1}{n_1}e_1(-\lambda)\right)\left(1+\frac{p_2}{n_2}e_2(-\lambda)\right)}.
\end{align}
Since 
\begin{align}
    \lim_{\lambda\to0} \frac{\frac{n_{12}p_{12}}{n_1n_2}e_1(-\lambda)e_2(-\lambda)}{\left(1+\frac{p_1}{n_1}e_1(-\lambda)\right)\left(1+\frac{p_2}{n_2}e_2(-\lambda)\right)} =
    \begin{cases}
        \frac{n_{12}p_{12}}{n_1n_2}  &\quad p_1 < n_1, p_2 < n_2,\\
        \frac{n_{12}p_{12}}{n_2 p_1} &\quad n_1 < p_1, p_2 < n_2,\\
        \frac{n_{12}p_{12}}{n_1p_2} &\quad p_1 < n_1, n_2 < p_2,\\
        \frac{n_{12}p_{12}}{p_1 p_2} &\quad n_1 < p_1, n_2 < p_2,
    \end{cases}
\end{align}
(\ref{sub2}) holds true.
By combining all above results, the proof is complete.

\end{proof}

\begin{remark}
    The above Lemma \ref{important_lemma} assumes that the common parts of $Z1$ and $Z2$ are clustered in the upper left corner of the matrix, but due to symmetry, the above Lemma \ref{important_lemma} holds for both matrices whose common parts are completely scattered by rearranging the rows and columns of both matrices, respectively.
\end{remark}

The following Lemma is also helpful in obtaining the limiting behavior of the bias and variance terms of the risks in Theorem \ref{main_result1} and \ref{main_result2}. 

\begin{lemma}
\label{fundamental lemma}
Let $Q_1 \coloneqq Q_1(\lambda)\coloneqq \left(n_{1}^{-1}Z_1^\top Z_1 +\lambda \mathrm{I}_{p_1}\right)^{-1}$ and $Q_2\coloneqq Q_2(\lambda)\coloneqq \left(n_2^{-1}Z_2^\top Z_2 +\lambda \mathrm{I}_{p_2}\right)^{-1} $, where $Z_1$ and $Z_2$ be $n_1\times p_1$ and $n_2\times p_2$ matrices satisfying all assumptions made in Lemma \ref{important_lemma}. Furthermore, let $Z_{1,12}$ and $Z_{2,12}$ be $n_{12}\times p_1$ and $n_{12}\times p_2$ matrices, respectively, which are taken from the parts of the $Z_1$ and $Z_2$ matrices corresponding to the common $n_12$ row vectors. (Under the assumption made in Lemma \ref{important_lemma}, $Z_{1,12}$ and $Z_{2,12}$ are taken from the first $n_{12}$ row vectors of $Z_1$ and $Z_2$, respectively). Then, for any $\lambda>0$,
\begin{align*}
    \frac{1}{n_1n_2} \mathrm{Tr} \left(Z_{1,12}Q_1E_{12}Q_2 Z_{2,12}^\top\right) - \frac{\frac{n_{12}}{n_1n_2}\mathrm{Tr} \left(Q_1E_{12} Q_2 E_{12}^\top\right)}{\left(1+\frac{1}{n_1}\mathrm{Tr}\left(Q_1\right)\right)\left(1+\frac{1}{n_2}\mathrm{Tr}\left(Q_2\right)\right)}\to 0
\end{align*}
almost surely, where $E_{12} = \frac{1}{n}\mathbbm{E}\left[Z_1^\top Z_2\right]$.
\end{lemma}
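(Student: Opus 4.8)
The plan is to adapt the resolvent and leave-one-out machinery from the proof of Lemma~\ref{important_lemma}, keeping track of the two data matrices $Z_{1,12},Z_{2,12}$ that now sit on the outside of the trace. Writing $z_{1k}^\top$ and $z_{2k}^\top$ for the $k$th rows of $Z_1$ and $Z_2$ (which for $k=1,\dots,n_{12}$ share their first $p_{12}$ coordinates), the first step is to expand the trace over the common rows,
\begin{align}
    \frac{1}{n_1n_2}\mathrm{Tr}\left(Z_{1,12}Q_1E_{12}Q_2Z_{2,12}^\top\right) = \frac{1}{n_1n_2}\sum_{k=1}^{n_{12}} z_{1k}^\top Q_1 E_{12} Q_2 z_{2k}. \nonumber
\end{align}

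Next I would decouple each summand from the $k$th rows. Introducing the leave-one-out resolvents $Q_{1(k)},Q_{2(k)}$ (with the $k$th row deleted) and applying the Sherman--Morrison--Woodbury identities (\ref{SMW_identity1}), (\ref{SMW_identity2}) gives
\begin{align}
    z_{1k}^\top Q_1 E_{12} Q_2 z_{2k} = \frac{z_{1k}^\top Q_{1(k)} E_{12} Q_{2(k)} z_{2k}}{\left(1+\frac{1}{n_1}z_{1k}^\top Q_{1(k)}z_{1k}\right)\left(1+\frac{1}{n_2}z_{2k}^\top Q_{2(k)}z_{2k}\right)}, \nonumber
\end{align}
in which $Q_{1(k)},Q_{2(k)}$ are independent of $z_{1k}$ and $z_{2k}$. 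I would replace the two quadratic forms in the denominator by $\frac1{n_1}\mathrm{Tr}(Q_{1(k)})$ and $\frac1{n_2}\mathrm{Tr}(Q_{2(k)})$ using Lemma~\ref{bai_ineqaulity1}, and then by $\frac1{n_1}\mathrm{Tr}(Q_1)$, $\frac1{n_2}\mathrm{Tr}(Q_2)$ using the rank-one perturbation bound of Lemma~\ref{bai_inequality2}; this reproduces exactly the denominator $(1+\frac1{n_1}\mathrm{Tr}(Q_1))(1+\frac1{n_2}\mathrm{Tr}(Q_2))$ of the claim. It is worth stressing that these denominator factors are the ``self-energy'' corrections that a naive replacement of $Z_{2,12}^\top Z_{1,12}$ by its mean $n_{12}E_{12}^\top$ would miss, so the leave-one-out step is genuinely necessary.

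The crux is showing that the numerator $z_{1k}^\top Q_{1(k)} E_{12} Q_{2(k)} z_{2k}$ concentrates on $\mathrm{Tr}(Q_{1(k)}E_{12}Q_{2(k)}E_{12}^\top)$. The obstacle is that $z_{1k}$ and $z_{2k}$ are \emph{not} independent --- they share their first $p_{12}$ coordinates --- so the cited trace lemmas, stated for a single i.i.d.\ vector, do not apply verbatim. My plan is to split $z_{1k}=(u_k,v_k)$, $z_{2k}=(u_k,w_k)$ with $u_k\in\mathbbm{R}^{p_{12}}$ the shared block and $v_k,w_k$ the mutually independent tails, and to partition $A\coloneqq Q_{1(k)}E_{12}Q_{2(k)}$ conformably into blocks $A_{ab}$. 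The bilinear form then decomposes as $u_k^\top A_{11}u_k + u_k^\top A_{12}w_k + v_k^\top A_{21}u_k + v_k^\top A_{22}w_k$; the first summand is a genuine quadratic form governed by Lemma~\ref{bai_ineqaulity1}, while the three cross terms have conditional mean zero and are controlled by a bilinear version of the moment estimate in Lemma~\ref{rubio_inequality1}. Since $E_{12}$ is the partial identity onto the shared coordinates, $\mathbbm{E}[z_{1k}^\top A z_{2k}\mid A]=\mathrm{Tr}(A_{11})=\mathrm{Tr}(AE_{12}^\top)$, which is the target trace; the high-moment hypothesis $\mathbbm{E}[|X_{ij}|^{12+\omega}]<\infty$ enters precisely here, delivering the $O(n^{-q/2})$ per-term rates (with $q=2+\omega/6$) used below.

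Finally I would assemble the pieces and upgrade to almost-sure convergence. A technical point distinguishes this lemma from Lemma~\ref{important_lemma}: the matrix $E_{12}$ has trace norm of order $n$ rather than $O(1)$ (the role played by the bounded-trace-norm $\Theta_2$ there), so Lemma~\ref{rubio_convergence1} cannot be applied to $A$ directly. I would absorb a factor $1/n$ to renormalize $\|A/n\|_{\mathrm{Tr}}=O(1)$, the overall $1/(n_1n_2)$ prefactor compensating; the moment version in Remark~\ref{remark_for_rubio} then bounds $\mathbbm{E}[|\,\cdot\,|^q]$ for the centered average by $O(n^{-q/2})$, and a Borel--Cantelli argument (as packaged in Lemma~\ref{lem_for_sum_of_rv}) yields almost-sure vanishing once $q$ is taken large enough, which the moment assumption permits. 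Combining the denominator reduction of the second step with the numerator concentration of the third step, every summand is within an $o_{a.s.}(1)$-controlled error of $\frac{1}{n_1n_2}\mathrm{Tr}(Q_1E_{12}Q_2E_{12}^\top)/[(1+\frac1{n_1}\mathrm{Tr}(Q_1))(1+\frac1{n_2}\mathrm{Tr}(Q_2))]$, and summing over the $n_{12}$ common rows gives the stated limit. The main obstacle throughout is the correlated bilinear concentration of the numerator; once the shared/independent block decomposition reduces it to a quadratic form plus mean-zero cross terms, the remaining estimates are routine applications of the trace lemmas recorded in the excerpt.
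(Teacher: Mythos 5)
Your proposal follows essentially the same route as the paper's proof: expand the trace over the $n_{12}$ common rows, decouple via the Sherman--Morrison--Woodbury leave-one-out identities, replace the denominator quadratic forms and the numerator bilinear form by their trace counterparts using the moment bounds of Lemmas \ref{bai_ineqaulity1}, \ref{bai_inequality2}, \ref{rubio_inequality1}, and conclude almost-sure convergence via Lemma \ref{lem_for_sum_of_rv}. Your explicit block decomposition of the correlated pair $z_{1k},z_{2k}$ into a shared quadratic form plus mean-zero cross terms is a detail the paper leaves implicit in its citation of the trace lemmas, but it is the correct way to justify that step and does not change the argument.
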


\begin{proof}
    In this proof, we use the same notation as that in the proof of Lemma \ref{important_lemma}. 
    First, note that
    \begin{align}
        \frac{1}{n_1n_2} \mathrm{Tr} \left(Z_{1,12}Q_1E_{12}Q_2 Z_{2,12}^\top\right) &= \frac{1}{n_1n_2}\sum_{k=1}^{n_{12}} z_{1,12,k}^\top Q_1 E_{12} Q_2 z_{2,12,k} \nonumber   \\
        &= \frac{n_{12}}{n_1n_2}\sum_{k=1}^{n_{12}} \frac{\frac{1}{n_{12}}z_{1,k}^\top Q_{1(k)} E_{12} Q_{2(k)} z_{2k}}{(1+\frac{1}{n_1}z_{1k}^\top Q_{1(k)} z_{1k}) (1+\frac{1}{n_2}z_{2k}^\top Q_{2(k)} z_{2k})} ,\label{eqnlabel3.1}
    \end{align}
    where we use the well-known formula,
    \begin{align}
        Q_{1}z_{1k} =  \frac{Q_{1(k)}z_{1k}}{1+\frac{1}{n_1}z_{1k}^\top Q_{1(k)} z_{1k}}
    \end{align}
    \begin{align}
        Q_{2}z_{2k} =  \frac{Q_{2(k)}z_{2k}}{1+\frac{1}{n_2}z_{2k}^\top Q_{2(k)} z_{2k}}
    \end{align}
    which can be deduced from the Sherman-Morrison-Woodbury identity.
    Hence, using an argument analogous to the proof of Lemma \ref{important_lemma}, 
    \begin{align*}
        \frac{1}{n_1n_2} \mathrm{Tr} \left(Z_1Q_1E_{12}Q_2 Z_2^\top\right) - \frac{\frac{n_{12}}{n_{1}n_2}\mathrm{Tr} \left(Q_1E_{12} Q_2 E_{12}^\top\right)}{\left(1+\frac{1}{n_1}\mathrm{Tr}\left(Q_1\right)\right)\left(1+\frac{1}{n_2}\mathrm{Tr}\left(Q_2\right)\right)}\to 0.
    \end{align*}
    Indeed, from Lemmas \ref{bai_ineqaulity1},\ref{bai_inequality2}, and \ref{rubio_inequality1}, we can obtain
    \begin{align*}
        \mathbbm{E}\left[\left|\frac{\frac{1}{n_{12}}z_{1k}^\top Q_{1(k)} E_{12} Q_{2(k)} z_{2k}}{(1+\frac{1}{n_1}z_{1k}^\top Q_{1(k)} z_{1k}) (1+\frac{1}{n_2}z_{2k}^\top Q_{2(k)} z_{2k})}- \frac{\frac{1}{n_{12}}\mathrm{Tr} \left(Q_1E_{12} Q_2 E_{12}^\top\right)}{(1+\frac{1}{n_1}z_{1k}^\top Q_{1(k)} z_{1k}) (1+\frac{1}{n_2}z_{2k}^\top Q_{2(k)} z_{2k})}\right|^q\right] = O\left(\frac{1}{n^{q/2}}\right),
    \end{align*}
    
    \begin{align*}
        \mathbbm{E}\left[\left|\frac{\frac{1}{n_{12}}\mathrm{Tr} \left(Q_1E_{12} Q_2 E_{12}^\top\right)}{(1+\frac{1}{n_1}z_{1k}^\top Q_{1(k)} z_{1k}) (1+\frac{1}{n_{2}}z_{2k}^\top Q_{2(k)} z_{2k})}- \frac{\frac{1}{n_{12}}\mathrm{Tr} \left(Q_1E_{12} Q_2 E_{12}^\top\right)}{\left(1+\frac{1}{n_1}\mathrm{Tr}\left(Q_1\right)\right) (1+\frac{1}{n_2}z_{2k}^\top Q_{2(k)} z_{2k})}\right|^q\right] = O\left(\frac{1}{n^{q/2}}\right),
    \end{align*}
    
    \begin{align*}
        \mathbbm{E}\left[\left|\frac{\frac{1}{n_{12}}\mathrm{Tr} \left(Q_1E_{12} Q_2 E_{12}^\top\right)}{\left(1+\frac{1}{n_1}\mathrm{Tr}\left(Q_1\right)\right) (1+\frac{1}{n_2}z_{2k}^\top Q_{2(k)} z_{2k})}- \frac{\frac{1}{n_{12}}\mathrm{Tr} \left(Q_1E_{12} Q_2 E_{12}^\top\right)}{\left(1+\frac{1}{n_1}\mathrm{Tr}\left(Q_1\right)\right)\left(1+\frac{1}{n_2}\mathrm{Tr}\left(Q_2\right)\right)}\right|^q\right] = O\left(\frac{1}{n^{q/2}}\right),
    \end{align*}
    with $q = 2+\omega/6$.
    This completes the proof together with Lemma \ref{lem_for_sum_of_rv}.
    
\end{proof}
 
\end{document}